\documentclass{amsart}
\usepackage[utf8]{inputenc}
\usepackage{amsthm}
\usepackage{amsmath}
\usepackage{amssymb}
\usepackage{mathrsfs}
\usepackage{enumitem, hyperref}
\usepackage{changepage}
\usepackage{graphics}
\usepackage{pifont}
\usepackage{adjustbox}

\usepackage{tikz}
\usepackage{tikz-cd}

\usepackage{float}

\newtheorem{thm}{Theorem}[section]
\newtheorem{lem}[thm]{Lemma}

\newtheorem{prop}[thm]{Proposition}
\newtheorem{cor}[thm]{Corollary}

\theoremstyle{definition}
\newtheorem{defi}[thm]{Definition}

\newtheorem{ex}[thm]{Example}

\theoremstyle{remark}
\newtheorem{remark}[thm]{Remark}
\newtheorem{question}[thm]{Question}

\newtheorem{warning}[thm]{Warning}

\DeclareMathOperator{\Hom}{\mathcal{H}om}

\DeclareMathOperator{\End}{End}

\newcommand{\Addresses}{{
  \bigskip
  \footnotesize
  C.~Hone, \textsc{Department of Mathematical Sciences, University of Copenhagen, Denmark}\par\nopagebreak
  \textit{E-mail address}: \texttt{cth@math.ku.dk}
}}

\title{Real algebraic varieties and their intersection cohomology}
\author{Chris Hone}

\begin{document}
\maketitle
\author
\begin{abstract}
For a real variety with smooth point, we construct a complex of sheaves on its real points which behaves like intersection cohomology. This real geometric extension is (shifted) Verdier self-dual, and occurs as a direct summand within any resolution of singularities. The stalks of this object provide lower bounds for the Betti numbers of the real fibres in any resolution of singularities. On the real flag variety, the category of real geometric extensions along the Schubert stratification is equivalent to the corresponding category of even parity sheaves on the complex flag variety.
\end{abstract}
\section{Introduction}

A smooth complete $d$-dimensional algebraic variety $X$ over the real numbers has an associated manifold $X(\mathbb{R})$ of real points, and if $X(\mathbb{R})$ is nonempty, one has a fundamental class in the top degree homology $H_d(X(\mathbb{R}),\mathbb{F}_2)$. The cap product with this class induces the Poincaré duality isomorphism:
\[H^\ast(X(\mathbb{R}),\mathbb{F}_2)\to 
H_{d-\ast}(X(\mathbb{R}),\mathbb{F}_2).\]
If $X$ is singular, one still has a fundamental class in top degree homology, but this map is not necessarily an isomorphism. Over the complex numbers, where any singular $Y$ has a fundamental class over any field $k$, this map has a canonical factorisation through the (middle perversity) intersection cohomology groups of Goresky-MacPherson: \[\begin{tikzcd}
H^\ast(Y(\mathbb{C}),k)\arrow[r,"\cap \text{[}Y(\mathbb{C})\text{]} "]\arrow[d]&H_{2d-\ast}(Y(\mathbb{C}),k)\\
IH^\ast(Y(\mathbb{C}),k)\arrow[r,"\sim"]&IH^{2d-\ast}(Y(\mathbb{C}),k)^\vee\arrow[u]
\end{tikzcd}\]
These groups $IH^\ast(Y(\mathbb{C}),k)$ are naturally self-dual via a perfect intersection pairing, and are a fundamental invariant of the topological space $Y(\mathbb{C})$. An old question of Goresky-MacPherson \cite[Q.7]{Goresky1984} asks if a similar family of intersection cohomology groups exists for $Y(\mathbb{R})$ with mod two coefficients. Our main result is the construction of such groups.

\begin{thm}\label{Thm:coh of real geo ext}
Let $Y$ be a complete $d$-dimensional irreducible real algebraic variety admitting a smooth real point. Then there exists a family of $\mathbb{F}_2$ vector spaces $\mathscr{E}^i(Y(\mathbb{R}),\mathbb{F}_2)$ with the following properties:

\begin{itemize}
    \item There exists\footnote{This is induced by a (noncanonical) choice of duality on the derived level, see Proposition \ref{propertiesrealgeoext}.} a Poincaré duality isomorphism: \[\mathscr{E}^i(Y(\mathbb{R}),\mathbb{F}_2)\to \mathscr{E}^{d-i}(Y(\mathbb{R}),\mathbb{F}_2)^\vee.\]
     \item These vector spaces are finite dimensional, and vanish outside the dimension of $Y(\mathbb{R})$:\[\mathscr{E}^i(Y(\mathbb{R}),\mathbb{F}_2)=0 \text{ if } i>d \text{ or } i<0.\]
    \item One has canonical comparison maps \[H^i(Y(\mathbb{R}),\mathbb{F}_2)\xrightarrow{\gamma_i} \mathscr{E}^i(Y(\mathbb{R}),\mathbb{F}_2)\] which factorise the cap product with the fundamental class: \[\begin{tikzcd}
H^i(Y(\mathbb{R}),\mathbb{F}_2)\arrow[r,"\cap \text{[}Y(\mathbb{R})\text{]} "]\arrow[d,"\gamma_i"]&H_{d-i}(Y(\mathbb{R}),\mathbb{F}_2)\\
\mathscr{E}^i(Y(\mathbb{R}),\mathbb{F}_2)\arrow[r,"\sim"]&\mathscr{E}^{d-i}(Y(\mathbb{R}),\mathbb{F}_2)^\vee\arrow[u,"\gamma_{d-i}^\vee"']
\end{tikzcd}\]
    \item These maps $\gamma_i$ are all isomorphisms if $Y$ is smooth, or more generally if $Y(\mathbb{R})$ is $\mathbb{F}_2$-smooth.
\end{itemize}
\end{thm}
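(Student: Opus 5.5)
The plan is to construct $\mathscr{E}^\ast$ as the hypercohomology of a complex of sheaves $\mathscr{E}_Y$ on $Y(\mathbb{R})$, namely the \emph{real geometric extension}, and then read off the four properties from sheaf-level statements.

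\textbf{Construction.} Let $U\subset Y(\mathbb{R})$ be the open set of smooth real points, which is nonempty by hypothesis. On $U$ take the constant sheaf $\mathbb{F}_2$. The first step is to define $\mathscr{E}_Y\in D^b_c(Y(\mathbb{R}),\mathbb{F}_2)$ as an indecomposable object with the following properties:
\begin{itemize}
\item its restriction to $U$ is $\mathbb{F}_2$;
\item it is constructible for a semialgebraic stratification;
\item it is a direct summand of $R\pi_\ast\mathbb{F}_{\tilde Y(\mathbb{R})}$ for a resolution of singularities $\pi\colon\tilde Y\to Y$.
\end{itemize}
The resolution $\tilde Y$ exists by Hironaka and is defined over $\mathbb{R}$. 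Because $\pi$ is proper and $\tilde Y(\mathbb{R})$ is a smooth compact $d$-manifold, $R\pi_\ast\mathbb{F}_2$ is Verdier self-dual up to shift: $\mathbb{D}R\pi_\ast\mathbb{F}_2\cong R\pi_\ast\mathbb{F}_2[d]$.

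Take the Krull--Schmidt decomposition of $R\pi_\ast\mathbb{F}_2$; this is valid since endomorphism rings are finite dimensional, because $Y(\mathbb{R})$ is compact and constructibility holds. Exactly one summand $\mathscr{E}_Y$ is supported on all of $Y(\mathbb{R})$ and restricts to $\mathbb{F}_2$ on $U$. The others are supported on the singular locus, possibly together with summands that vanish on $U$.

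\textbf{Independence and duality (main obstacle).} The hard step is showing that $\mathscr{E}_Y$ is independent of $\pi$ up to isomorphism, and that $\mathbb{D}\mathscr{E}_Y\cong\mathscr{E}_Y[d]$. Unlike the complex case, there is no decomposition theorem or perverse t-structure to pin down the summand. The approach I would try first is parity-sheaf style. I would compare two resolutions via a common dominating resolution $\tilde Y''\to\tilde Y,\tilde Y'$. Since $\mathbb{F}_2\to R\rho_\ast\mathbb{F}_2$ splits for a birational map $\rho$ of smooth real varieties (via the fundamental class/trace, degree one mod 2), $R\pi_\ast\mathbb{F}_2$ is a summand of $R\pi''_\ast\mathbb{F}_2$. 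Then I would use that an indecomposable summand restricting to $\mathbb{F}_2$ on the dense $U$ is unique. This uniqueness comes from a local-ring argument: $\End$ of the summand maps to $\End(\mathbb{F}_2|_U)=\mathbb{F}_2$, and idempotents lift. For duality, $\mathbb{D}$ permutes the indecomposable summands of the self-dual $R\pi_\ast\mathbb{F}_2[d/2]$-type object, and it preserves the property "restricts to $\mathbb{F}_2[d]$ on $U$". By uniqueness it sends $\mathscr{E}_Y$ to $\mathscr{E}_Y[d]$. The choice of isomorphism is noncanonical, which matches the footnote.

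\textbf{Deducing the properties.} Set $\mathscr{E}^i:=H^i(Y(\mathbb{R}),\mathscr{E}_Y)$. These groups are finite dimensional because the space is compact and the complex constructible. Poincaré duality follows from $\mathbb{D}\mathscr{E}_Y\cong\mathscr{E}_Y[d]$ and Verdier duality on the compact space. Vanishing for $i<0$ or $i>d$ follows because $\mathscr{E}^i$ is a summand of $H^i(\tilde Y(\mathbb{R}),\mathbb{F}_2)$, which vanishes outside $[0,d]$.

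The map $\gamma$ is induced by the adjunction unit $\mathbb{F}_{Y(\mathbb{R})}\to R\pi_\ast\mathbb{F}_2\to\mathscr{E}_Y$. It is canonical because $\mathrm{Hom}(\mathbb{F}_2,\mathscr{E}_Y)=H^0(\mathscr{E}_Y)$, and the map is determined by its restriction to $U$, being $1$ there. Dualizing gives $\mathbb{D}\mathscr{E}_Y\to\omega_{Y(\mathbb{R})}$. The composite $\mathbb{F}_2\to\mathscr{E}_Y\cong\mathbb{D}\mathscr{E}_Y[-d]\to\omega[-d]$ is a class in $H_d(Y(\mathbb{R}))$. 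It restricts to the fundamental class on $U$, and $H_d(Y(\mathbb{R}))\to H_d^{BM}(U)$ is injective since the singular locus has dimension $<d$. Hence the composite equals $[Y(\mathbb{R})]$, and taking cohomology yields the factorization of the cap product.

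Finally, suppose $Y(\mathbb{R})$ is $\mathbb{F}_2$-smooth, meaning $\omega\cong\mathbb{F}_2[d]$. Then $\mathbb{F}_2$ itself is self-dual and indecomposable. The splitting of $\mathbb{F}_2\to R\pi_\ast\mathbb{F}_2$ via trace, using that the composite with the fundamental-class map is the identity, exhibits $\mathbb{F}_2$ as the distinguished summand. By uniqueness $\gamma$ is then an isomorphism.
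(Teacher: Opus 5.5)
Your outline follows the paper's route: Krull--Schmidt, domination of resolutions with the fundamental-class splitting of $\mathbf{1}\to R\rho_*\mathbf{1}$, duality by letting $\mathbb{D}(-)[-d]$ permute summands of $R\pi_*\mathbf{1}$, $\gamma$ from the adjunction unit, and the factorisation via uniqueness of the fundamental class. The gap is in how you single out the summand. You require $\mathscr{E}_Y$ to be indecomposable, claim exactly one indecomposable summand restricts to $\mathbb{F}_U$, use $\End(\mathbb{F}_U)=\mathbb{F}_2$, and in the last bullet treat $\mathbb{F}_{Y(\mathbb{R})}$ as indecomposable. All of this fails once $U$ is disconnected, since $\End(\mathbf{1}_U)=H^0(U,\mathbb{F}_2)=\mathbb{F}_2^{\pi_0(U)}$. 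For example, the projective closure of $y^2=x(x-1)(x+1)$ is smooth, complete and irreducible with $Y(\mathbb{R})=C_1\sqcup C_2$. With $\pi=\mathrm{id}$ one has $R\pi_*\mathbf{1}=\mathbf{1}_{C_1}\oplus\mathbf{1}_{C_2}$, so no indecomposable summand restricts to $\mathbb{F}_U$. Your object then does not exist, in a case where the theorem asserts that $\gamma$ is an isomorphism. Decomposability also occurs with $Y(\mathbb{R})$ connected, e.g.\ if you glue a point of $C_1$ to a point of $C_2$ to form a real node.

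The repair is the paper's definition. Let $\mathscr{E}_Y$ be the sum of all indecomposable summands of $R\pi_*\mathbf{1}$ whose restriction to $U$ is nonzero. By Krull--Schmidt this is well defined up to isomorphism, and it restricts to $\mathbf{1}_U$ because the discarded summands vanish there. Independence follows because in $R\pi''_*\mathbf{1}\cong R\pi_*\mathbf{1}\oplus\pi_*Q$ the term $\pi_*Q$ vanishes on $U$. Your duality argument goes through with ``nonzero on $U$'' in place of ``indecomposable''.

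For the last bullet you then need an input you omitted. The trace splitting makes $\mathbf{1}_{Y(\mathbb{R})}$ a summand of $R\pi_*\mathbf{1}$, but it equals $\mathscr{E}_Y$ only if no summand of it vanishes on $U$. That means every connected component of $Y(\mathbb{R})$ must contain a smooth real point; otherwise such a component $C$ contributes an extra $\mathbf{1}_C$, as at the isolated real point of $y^2=x^2(x-1)$. The paper gets this from Proposition~\ref{Prop:smooth real point}, since $\omega_{Y(\mathbb{R})}\cong\mathbf{1}[d]$ forces $H^{-d}(i_y^*\omega_{Y(\mathbb{R})})\neq 0$ at every $y$.

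Lastly, your claim that $\mathbf{1}\to\mathscr{E}_Y$ is determined by its restriction to $U$ needs a reason. The map $\mathbb{H}^0(\mathscr{E}_Y)\to\mathbb{H}^0(U,\mathscr{E}_Y|_U)$ is a retract of $H^0(\tilde Y(\mathbb{R}))\to H^0(\pi^{-1}(U))$. That map is injective because $\pi^{-1}(U)$ is dense in the manifold $\tilde Y(\mathbb{R})$.
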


In the complex case, the analogous intersection cohomology groups of $Y(\mathbb{C})$ arise as the hypercohomology groups of a certain complex of sheaves $\mathbf{IC}(Y(\mathbb{C}),k)$.
We also construct the groups $\mathscr{E}^i(Y(\mathbb{R}),\mathbb{F}_2)$ as the (hyper)cohomology groups of a complex of sheaves $\mathscr{E}(Y_\mathbb{R})$ in the constructible derived category of $Y(\mathbb{R})$ with $\mathbb{F}_2$ coefficients.

\begin{thm}\label{Thm:Real geo extension}
   Let $Y$ be a $d$-dimensional real variety, with nonempty smooth locus $Y^{sm}(\mathbb{R})$. We define the groups of Theorem \ref{Thm:coh of real geo ext} as the hypercohomology of a complex of sheaves $\mathscr{E}(Y_\mathbb{R})$ on $Y(\mathbb{R})$:\[\mathscr{E}^i(Y(\mathbb{R}),\mathbb{F}_2):=\mathbb{H}^i\mathscr{E}(Y_\mathbb{R})\]
   We call this complex $\mathscr{E}(Y_\mathbb{R})$ the real geometric extension on $Y$, and it has the following properties:

\begin{enumerate}
    \item For any resolution of singularities $f:X\to Y$, $\mathscr{E}(Y_\mathbb{R})$ is a direct summand of $Rf_*\mathbf{1}_{X(\mathbb{R})}$, the (derived) pushforward of the constant sheaf on $X(\mathbb{R})$.
    \item The complex $\mathscr{E}(Y_\mathbb{R})$ is isomorphic to its Verdier dual up to a shift by the dimension $d$:
    \[\mathbb{D}\mathscr{E}(Y_\mathbb{R})\cong \mathscr{E}(Y_\mathbb{R})[d]\]
    \item The cohomology sheaves of $\mathscr{E}(Y_\mathbb{R})$ are constructible and its (compactly supported) cohomology groups are finite dimensional, as is the cohomology of its stalks and costalks.
\end{enumerate}
\end{thm}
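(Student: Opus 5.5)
The plan is to build $\mathscr{E}(Y_\mathbb{R})$ from a single resolution using a Krull--Schmidt decomposition, then show the result does not depend on the resolution, and finally read off duality and finiteness. Everything takes place in the bounded constructible (semialgebraically constructible) derived category $D^b_c(Y(\mathbb{R}),\mathbb{F}_2)$.

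\emph{Step 1: Krull--Schmidt.} By Hironaka (the base field has characteristic zero), resolutions exist. For a proper map $f:X\to Y$ with $X$ smooth, the complex $Rf_*\mathbf{1}_{X(\mathbb{R})}$ is constructible with respect to a finite semialgebraic stratification. This is because proper pushforward preserves semialgebraic constructibility; for non-complete $Y$ one uses the semialgebraic compactness of the fibres. I would then check two facts:
\begin{itemize}
\item Hom spaces between such objects are finite dimensional over $\mathbb{F}_2$; this follows by dévissage along the strata.
\item The category is Karoubian, since a bounded derived category of an abelian category is idempotent complete.
\end{itemize}
Together these give Krull--Schmidt. I expect this to be the main technical obstacle, because finiteness of $\mathrm{Hom}$ on the real points must be extracted from semialgebraic triangulation rather than from complex-algebraic tools. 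I would try to reduce it to finiteness of cohomology of compact semialgebraic sets with constructible coefficients.

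\emph{Step 2: definition.} Let $U\subset Y$ be a dense Zariski open subset over which $f$ is an isomorphism, chosen inside $Y^{sm}$. Since $Y$ has a smooth real point, $Y^{sm}(\mathbb{R})$ is a nonempty manifold of dimension $d$, so it is Zariski dense in $Y$. Hence $U(\mathbb{R})\neq\emptyset$, and $Rf_*\mathbf{1}_{X(\mathbb{R})}|_{U(\mathbb{R})}=\mathbf{1}_{U(\mathbb{R})}$. Write $Rf_*\mathbf{1}_{X(\mathbb{R})}=\bigoplus_j M_j$ as a sum of indecomposables. Define $\mathscr{E}_f$ to be the sum of those $M_j$ with $M_j|_{U(\mathbb{R})}\neq 0$; by Krull--Schmidt this is well defined up to isomorphism. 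Shrinking $U$ does not change $\mathscr{E}_f$: a summand that is nonzero on $U(\mathbb{R})$ is nonzero on a smaller dense open subset, because its restriction to $U(\mathbb{R})$ is a summand of a locally constant sheaf.

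\emph{Step 3: independence of $f$.} Given two resolutions $f,f'$, choose a resolution $f''=f\circ g=f'\circ g'$ dominating both, for example by resolving the closure of the graph. The key claim is that for a proper birational map $g:X''\to X$ of smooth varieties, $\mathbf{1}_{X(\mathbb{R})}$ is a direct summand of $Rg_*\mathbf{1}_{X''(\mathbb{R})}$. The argument runs as follows:
\begin{itemize}
\item Mod $2$ every manifold is orientable, so $g^!\mathbf{1}_{X(\mathbb{R})}\cong\mathbf{1}_{X''(\mathbb{R})}$.
\item This gives a trace map $Rg_*\mathbf{1}=Rg_!g^!\mathbf{1}\to\mathbf{1}$.
\item The composite $\mathbf{1}\to Rg_*\mathbf{1}\to\mathbf{1}$ is multiplication by the mod-$2$ degree of $g$, which is $1$ since $g$ is an isomorphism over a dense open subset with nonempty real points. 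Here one must check that this open subset meets each connected component of $X(\mathbb{R})$, which holds because its complement has codimension at least one there.
\end{itemize}
Applying $Rf_*$, we get that $Rf_*\mathbf{1}$ is a summand of $Rf''_*\mathbf{1}$. Its complement restricts to zero on $U(\mathbb{R})$, since both restrict to $\mathbf{1}_{U(\mathbb{R})}$ and the inclusion is compatible there. By Krull--Schmidt, $\mathscr{E}_f\cong\mathscr{E}_{f''}$, and symmetrically $\mathscr{E}_{f'}\cong\mathscr{E}_{f''}$. We then set $\mathscr{E}(Y_\mathbb{R}):=\mathscr{E}_f$, and property (1) holds by construction.

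\emph{Step 4: duality and finiteness.} For (2), first take $f$ proper; in the general case, take a relative compactification and use $Rf_!$ appropriately. Then
\[\mathbb{D}Rf_*\mathbf{1}_{X(\mathbb{R})}\cong Rf_*\mathbb{D}\mathbf{1}_{X(\mathbb{R})}\cong Rf_*\mathbf{1}_{X(\mathbb{R})}[d],\]
using mod-$2$ orientability of the $d$-manifold $X(\mathbb{R})$. The functor $\mathbb{D}$ sends indecomposables to indecomposables and commutes with restriction to the open set $U(\mathbb{R})$. It therefore bijects the summands that are nonzero on $U(\mathbb{R})$ with the shifted summands of the same kind, and Krull--Schmidt gives $\mathbb{D}\mathscr{E}(Y_\mathbb{R})\cong\mathscr{E}(Y_\mathbb{R})[d]$. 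This isomorphism is non-canonical, matching the footnote to Theorem \ref{Thm:coh of real geo ext}. For (3), $\mathscr{E}(Y_\mathbb{R})$ is a summand of a constructible complex, so it is constructible. Its (compactly supported) cohomology, stalks and costalks are summands of the corresponding groups of $Rf_*\mathbf{1}$, namely the cohomology of $X(\mathbb{R})$ and of the real fibres of $f$, which are finite dimensional by semialgebraic triangulation.
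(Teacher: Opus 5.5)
Your proposal is correct and follows the paper's proof essentially step by step. The shared ingredients are: the Krull--Schmidt property of $D^b_c(Y(\mathbb{R}),\mathbb{F}_2)$; the splitting of $\mathbf{1}_{X(\mathbb{R})}\to Rg_*\mathbf{1}_{X''(\mathbb{R})}$ via the trace, i.e.\ the fundamental class (Corollary \ref{Cor:Splitting}); domination of two resolutions by a third; and self-duality deduced from $\mathbb{D}Rf_*\mathbf{1}_{X(\mathbb{R})}\cong Rf_*\mathbf{1}_{X(\mathbb{R})}[d]$ together with uniqueness of the dense summand. Two small remarks: singling out the dense summand via a dense Zariski open $U$ over which $f$ is an isomorphism, rather than all of $Y^{sm}(\mathbb{R})$, is slightly more careful than the paper, and your aside about relative compactifications in Step 4 is unnecessary because resolutions are proper.
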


The duality in the cohomology groups of $\mathscr{E}(Y_\mathbb{R})$ is induced from the choice of isomorphism $\mathscr{E}(Y_\mathbb{R})\cong \mathbb{D}\mathscr{E}(Y_\mathbb{R})[-d]$ in the derived category, so induces the general version of Poincaré duality using compactly supported cohomology:
\[\mathbb{H}^i(\mathscr{E}(Y_\mathbb{R}))\cong \mathbb{H}^{d-i}_c(\mathscr{E}(Y_\mathbb{R}))^\vee\]
We warn the reader that this choice of duality for $\mathscr{E}(Y_\mathbb{R})$ is a genuine choice however, even in this $\mathbb{F}_2$ setting where fundamental classes are canonical.

The first property of Theorem \ref{Thm:Real geo extension} implies that this sheaf $\mathscr{E}(Y_\mathbb{R})$ (and thus its cohomology) occurs within the cohomology of any resolution.

\begin{cor}\label{Cor:fibre minimality}
Let $f:X\to Y$ be any resolution of singularities of $Y$. Then we have an injection of vector spaces:\[\mathscr{E}^i(Y(\mathbb{R}),\mathbb{F}_2)\hookrightarrow H^i(X(\mathbb{R}),\mathbb{F}_2)\]
Similarly, let $y\in Y(\mathbb{R})$ and let $F_y:=f^{-1}(y)$ be the fibre over $y$. Then the inclusion of the cohomology of the stalk of $\mathscr{E}(Y_\mathbb{R})$ at $y$ provides a lower bound for the Betti numbers of $F_y(\mathbb{R})$:
\[\dim H^i(i_y^* \mathscr{E}(Y_\mathbb{R}))\leq \dim H^i(F_y(\mathbb{R}),\mathbb{F}_2)\]
\end{cor}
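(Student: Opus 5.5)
The plan is to deduce both inequalities from property (1) of Theorem \ref{Thm:Real geo extension}: $\mathscr{E}(Y_\mathbb{R})$ is a direct summand of $Rf_*\mathbf{1}_{X(\mathbb{R})}$. Any additive functor preserves direct summands, and a direct summand of a vector space injects into it. So in each case it is enough to apply a suitable functor and identify what it does to $Rf_*\mathbf{1}_{X(\mathbb{R})}$. Concretely, fix a decomposition $Rf_*\mathbf{1}_{X(\mathbb{R})}\cong \mathscr{E}(Y_\mathbb{R})\oplus K$ in the constructible derived category of $Y(\mathbb{R})$ with $\mathbb{F}_2$ coefficients.

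For the global statement, apply hypercohomology $\mathbb{H}^i=H^i R\Gamma(Y(\mathbb{R}),-)$, which is additive. This gives
\[H^i(X(\mathbb{R}),\mathbb{F}_2)\cong \mathbb{H}^i(Rf_*\mathbf{1}_{X(\mathbb{R})})\cong \mathscr{E}^i(Y(\mathbb{R}),\mathbb{F}_2)\oplus \mathbb{H}^i(K).\]
The first isomorphism uses $R\Gamma\circ Rf_*=R\Gamma$. The inclusion of the first summand is the required injection.

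For the stalk statement, apply $i_y^*$ followed by $H^i$, which is again additive. We get
\[H^i(i_y^*Rf_*\mathbf{1}_{X(\mathbb{R})})\cong H^i(i_y^*\mathscr{E}(Y_\mathbb{R}))\oplus H^i(i_y^*K).\]
It remains to identify the left-hand side with $H^i(F_y(\mathbb{R}),\mathbb{F}_2)$ by proper base change along $\{y\}\hookrightarrow Y(\mathbb{R})$, so that $i_y^*Rf_*\cong R(f|_{F_y})_*\,\mathbf{1}_{F_y(\mathbb{R})}$. Here $F_y(\mathbb{R})=f^{-1}(y)\cap X(\mathbb{R})$, and taking hypercohomology over a point then gives the claimed inequality of dimensions.

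The main obstacle is justifying proper base change. It requires the map of real points $f:X(\mathbb{R})\to Y(\mathbb{R})$ to be proper, and I would argue this as follows.
\begin{itemize}
\item A resolution of singularities is a proper morphism of varieties, so $f(\mathbb{C})$ is proper in the analytic topology.
\item $X(\mathbb{R})$ and $Y(\mathbb{R})$ are closed in $X(\mathbb{C})$ and $Y(\mathbb{C})$, since they are the fixed loci of complex conjugation.
\item Hence the preimage in $X(\mathbb{R})$ of a compact set $C\subseteq Y(\mathbb{R})$ is closed in the compact set $f(\mathbb{C})^{-1}(C)$, and so is compact.
\end{itemize}
If properness is unavailable, one can instead use that $Y(\mathbb{R})$ is locally compact and Hausdorff, and take a colimit over shrinking conic neighbourhoods of $y$, using the local conic structure of semialgebraic maps. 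Properness is simpler and suffices.

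Finite dimensionality of all the spaces involved follows from the constructibility in property (3) of Theorem \ref{Thm:Real geo extension}, together with compactness of the fibres. This ensures the dimension comparison is meaningful.
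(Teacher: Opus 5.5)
Your proposal is correct and matches the paper's argument. The paper also applies hypercohomology and the stalk functor $i_y^*$ to the splitting $f_*\mathbf{1}_{X(\mathbb{R})}\cong \mathscr{E}(Y_\mathbb{R})\oplus K$, using proper base change to identify $i_y^*f_*\mathbf{1}_{X(\mathbb{R})}$ with the cohomology of $F_y(\mathbb{R})$; your justification of properness on real points via properness over $\mathbb{C}$ is the same reasoning the paper uses in its lemma on proper birational maps.
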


In some cases, we may explicitly describe this complex of sheaves $\mathscr{E}(Y_\mathbb{R})$. For an irreducible variety $Y$ with nonempty smooth locus $Y^{sm}$, we say a resolution of singularities $f:X\to Y$ is small if the only irreducible component of $X\times_Y X$ of dimension at least $d$ is the closure of $\Delta(Y^{sm})$.

\begin{prop}
Let $f:X\to Y$ be a small resolution of $Y$. Then we may recognise this sheaf $\mathscr{E}(Y_\mathbb{R})$ as the derived pushforward of the constant sheaf along this resolution: \[\mathscr{E}(Y_\mathbb{R})\cong Rf_*\mathbf{1}_{X(\mathbb{R})}.\]
In particular, the cohomology of $\mathscr{E}(Y_\mathbb{R})$ recovers the cohomology of \emph{any} small resolution, if one exists:
\[\mathscr{E}^i(Y(\mathbb{R}),\mathbb{F}_2)\cong H^i(X(\mathbb{R}),\mathbb{F}_2).\]
\end{prop}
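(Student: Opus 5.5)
We aim to show that $Rf_*\mathbf{1}_{X(\mathbb{R})}$ is indecomposable, or more precisely that it has no direct summand other than $\mathscr{E}(Y_\mathbb{R})$. Combined with property (1) of Theorem \ref{Thm:Real geo extension}, which makes $\mathscr{E}(Y_\mathbb{R})$ a direct summand of $Rf_*\mathbf{1}_{X(\mathbb{R})}$, this gives the isomorphism. The main tool is the endomorphism ring $\End(Rf_*\mathbf{1}_{X(\mathbb{R})})$, computed via base change as Borel--Moore homology of the fibre product. Since $X$ is smooth of dimension $d$, $\mathbb{D}\mathbf{1}_{X(\mathbb{R})}\cong \mathbf{1}_{X(\mathbb{R})}[d]$ with $\mathbb{F}_2$ coefficients. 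Using $f$ proper, adjunction and proper base change then give
\[\operatorname{Hom}(Rf_*\mathbf{1}_{X(\mathbb{R})},Rf_*\mathbf{1}_{X(\mathbb{R})}[k])\cong H^{BM}_{d-k}\big((X\times_Y X)(\mathbb{R}),\mathbb{F}_2\big).\]

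We then bound this Borel--Moore homology in degree $d$ using smallness. The real points $(X\times_Y X)(\mathbb{R})$ are a semialgebraic set. Each irreducible component $Z$ of $X\times_Y X$ other than $\overline{\Delta(Y^{sm})}$ has $\dim Z<d$, so $\dim Z(\mathbb{R})\le \dim Z<d$. It follows that $(X\times_Y X)(\mathbb{R})$ is the union of $\overline{\Delta(Y^{sm})}(\mathbb{R})$ with a closed semialgebraic subset $C$ of dimension $<d$. The long exact sequence in Borel--Moore homology for the pair $(C, \text{complement})$, together with the vanishing $H^{BM}_d(C)=H^{BM}_{d+1}(C)=0$ for dimension reasons, gives
\[H^{BM}_d\big((X\times_Y X)(\mathbb{R})\big)\cong H^{BM}_d(U),\]
where $U$ is the open complement of $C$. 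This $U$ is an open subset of $\Delta(Y^{sm}(\mathbb{R}))\cong Y^{sm}(\mathbb{R})$, which is a $d$-manifold. Choosing $C$ to contain $\Delta(f^{-1}(Y^{sing}))$, we have $U\subseteq \Delta(f^{-1}(Y^{sm})(\mathbb{R}))$. Hence $H^{BM}_d(U)=\bigoplus_{\text{components}}\mathbb{F}_2$ indexes fundamental classes of the connected components of $U$.

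Restriction to $Y^{sm}(\mathbb{R})$ identifies this with $\End(\mathbf{1}_{Y^{sm}(\mathbb{R})})$, a product of copies of $\mathbb{F}_2$, one for each component. More precisely, the restriction map
\[\End(Rf_*\mathbf{1}_{X(\mathbb{R})})\to \End(\mathbf{1}_{Y^{sm}(\mathbb{R})})\]
is injective. Consequently any idempotent $e$ of $Rf_*\mathbf{1}_{X(\mathbb{R})}$ is determined by its restriction to the smooth locus. The key step is then to apply the same description to $\mathscr{E}(Y_\mathbb{R})$. Its restriction to $Y^{sm}(\mathbb{R})$ should be all of $\mathbf{1}_{Y^{sm}(\mathbb{R})}$, since over $Y^{sm}$ the resolution is an isomorphism and $\mathscr{E}$ is characterised by its generic behaviour. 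So the idempotent $e$ cutting out $\mathscr{E}(Y_\mathbb{R})$ restricts to the identity on $Y^{sm}(\mathbb{R})$, and injectivity forces $e=\mathrm{id}$. Therefore $\mathscr{E}(Y_\mathbb{R})\cong Rf_*\mathbf{1}_{X(\mathbb{R})}$, and taking hypercohomology gives $\mathscr{E}^i(Y(\mathbb{R}),\mathbb{F}_2)\cong H^i(X(\mathbb{R}),\mathbb{F}_2)$. This argument also covers a disconnected $Y^{sm}(\mathbb{R})$: the summand must be the full constant sheaf there, whatever the construction in the paper of $\mathscr{E}(Y_\mathbb{R})$ is.

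I expect the main obstacle to be the identification of $\mathscr{E}(Y_\mathbb{R})|_{Y^{sm}(\mathbb{R})}$ with all of $\mathbf{1}_{Y^{sm}(\mathbb{R})}$. This depends on how $\mathscr{E}(Y_\mathbb{R})$ is defined later in the paper. If instead it is defined as the unique summand containing the generic constant sheaf on some chosen component, then disconnected $Y^{sm}(\mathbb{R})$ needs care. In that case I would use the injectivity above to argue that all components are forced into one summand, showing that $Rf_*\mathbf{1}_{X(\mathbb{R})}$ is indecomposable.
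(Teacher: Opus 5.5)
Your proposal is correct and follows the paper's proof. Both identify $\End(f_*\mathbf{1}_{X(\mathbb{R})})$ with $H^{\mathrm{BM}}_d(X(\mathbb{R})\times_{Y(\mathbb{R})}X(\mathbb{R}),\mathbb{F}_2)$ via the convolution isomorphism, use smallness to make restriction to $\Delta(Y^{sm}(\mathbb{R}))$ injective, and conclude that the idempotent cutting out $\mathscr{E}(Y_\mathbb{R})$, which is the identity over $Y^{sm}(\mathbb{R})$, must be the identity.

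One small slip: the vanishing of $H^{\mathrm{BM}}_d(C)$ gives only injectivity into $H^{\mathrm{BM}}_d(U)$, not an isomorphism. The $H^{\mathrm{BM}}_{d+1}(C)$ term is irrelevant, and surjectivity can genuinely fail, e.g.\ for the normalisation of the real nodal cubic, where $Y^{sm}(\mathbb{R})$ has three components. Your argument only ever uses injectivity, so this does no harm.
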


\begin{remark}
   That the mod two cohomology of the real points of any small resolution is independent of the choice of resolution was an original motivation for real intersection cohomology. This independence has been shown through the prior constructions of real intersection cohomology in \cite{Realintersectionhomology} and \cite{VanHamel}, and via different methods by Totaro in \cite{Totaro2000}.
\end{remark}
For suitable isolated singularities, we may also recognise $\mathscr{E}(Y_\mathbb{R})$ as the pushforward along a resolution.

\begin{prop}
    Let $Y$ have an isolated singularity at $y\in Y(\mathbb{R})$, such that the blowup $\pi:Bl_yY\to Y$ has smooth exceptional divisor $E$. Assume that $E(\mathbb{R})$ is nonempty, and that the real normal bundle of $E(\mathbb{R})$ in the real points of this blowup is topologically trivial. Then we have an isomorphism: \[\mathscr{E}(Y_\mathbb{R})\cong R\pi_*\mathbf{1}_{Bl_y Y}\]
    As such, for any resolution $f:X\to Y$, the fibre $F_y=f^{-1}(y)$ contains the cohomology of $E(\mathbb{R})$ as a direct summand, so:
    \[\dim H^i(E(\mathbb{R}),\mathbb{F}_2) \leq \dim H^i(F_y(\mathbb{R}),\mathbb{F}_2)\]
    Since $E(\mathbb{R})$ is a nonempty, compact manifold of codimension one in the blowup, this fibre $F_y(\mathbb{R})$ has dimension $d-1$.
\end{prop}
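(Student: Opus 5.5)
We aim to show that $R\pi_*\mathbf{1}_{Bl_yY(\mathbb{R})}$ is itself indecomposable and satisfies the defining properties of $\mathscr{E}(Y_\mathbb{R})$. Then, since $\mathscr{E}(Y_\mathbb{R})$ is a direct summand of the pushforward along any resolution (Theorem \ref{Thm:Real geo extension}(1)), the two must agree. Write $U=Y\setminus\{y\}$ and $j:U(\mathbb{R})\to Y(\mathbb{R})$, $i_y:\{y\}\to Y(\mathbb{R})$. By construction, $\mathscr{E}(Y_\mathbb{R})$ restricts to $\mathbf{1}_{U(\mathbb{R})}$ on the smooth part, is a summand of $R\pi_*\mathbf{1}$, and is indecomposable; we take these as its characterising properties. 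The plan has three steps.

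\textbf{Step 1: reduce to the stalk at $y$.} Since $\pi$ is an isomorphism over $U$, we have $R\pi_*\mathbf{1}|_{U(\mathbb{R})}=\mathbf{1}_{U(\mathbb{R})}$. Any decomposition $R\pi_*\mathbf{1}\cong \mathscr{E}(Y_\mathbb{R})\oplus \mathcal{G}$ therefore has $\mathcal{G}$ supported at $y$, so $\mathcal{G}$ is a sum of shifted skyscrapers $i_{y*}V$. It suffices to show that $R\pi_*\mathbf{1}$ admits no nonzero summand of the form $i_{y*}\mathbb{F}_2[-k]$. Such a summand exists exactly when some composite
\[
i_{y*}\mathbb{F}_2[-k]\to R\pi_*\mathbf{1}\to i_{y*}\mathbb{F}_2[-k]
\]
is the identity. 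By adjunction this means a class in $H^k(i_y^!R\pi_*\mathbf{1})$ whose image under the natural map $i_y^!\to i_y^*$ is nonzero in $H^k(i_y^*R\pi_*\mathbf{1})=H^k(E(\mathbb{R}))$. We use proper base change here: the fibre of $\pi$ over $y$ is $E$, and a tubular neighbourhood retracts onto it.

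\textbf{Step 2: compute $i_y^!\to i_y^*$ using the trivial normal bundle.} Take a small ball $B$ around $y$. Its preimage is a tubular neighbourhood $T\cong E(\mathbb{R})\times(-1,1)$ of $E(\mathbb{R})$, where we use the topological triviality of the real normal bundle. Then $i_y^!R\pi_*\mathbf{1}$ is computed by $H^*(T, T\setminus E(\mathbb{R}))$. With the trivial normal bundle this is $H^*(E(\mathbb{R})\times((-1,1),(-1,1)\setminus 0))\cong H^{*-1}(E(\mathbb{R}))$, by the Thom isomorphism for a trivial line bundle. The map to $H^*(T)=H^*(E(\mathbb{R}))$ is cup product with the Euler class of the normal bundle. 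For the trivial bundle this class is zero, and this is the key use of the hypothesis. Hence $i_y^!\to i_y^*$ vanishes on cohomology, and by Step 1 no skyscraper summand splits off. The $\mathbb{F}_2$ coefficients and the fact that $E(\mathbb{R})$ is a compact manifold keep everything finite dimensional. Nonemptiness of $E(\mathbb{R})$ does not affect the argument, but it ensures the stalk is nonzero.

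\textbf{Step 3: conclude, and deduce the bound.} Combining the steps, $\mathcal{G}=0$, so $\mathscr{E}(Y_\mathbb{R})\cong R\pi_*\mathbf{1}_{Bl_yY(\mathbb{R})}$. Note that $Bl_yY$ is a resolution, since $E$ is smooth and $Y$ is smooth away from $y$. For an arbitrary resolution $f$, Corollary \ref{Cor:fibre minimality} gives
\[
\dim H^i(i_y^*\mathscr{E}(Y_\mathbb{R}))\le \dim H^i(F_y(\mathbb{R}),\mathbb{F}_2),
\]
and we have $i_y^*\mathscr{E}(Y_\mathbb{R})=i_y^*R\pi_*\mathbf{1}\simeq C^*(E(\mathbb{R}))$. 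This inequality holds in every degree, in particular in degree $d-1$, where $H^{d-1}(E(\mathbb{R}))\neq 0$ because $E(\mathbb{R})$ is a nonempty compact $(d-1)$-manifold and we work with $\mathbb{F}_2$ coefficients. So $H^{d-1}(F_y(\mathbb{R}))\neq0$, which forces $\dim F_y(\mathbb{R})\ge d-1$. The reverse inequality holds because $F_y$ is a proper subvariety of the $d$-dimensional $X$.

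\textbf{Main obstacle.} The hard part is Step 1: justifying that $\mathscr{E}(Y_\mathbb{R})$ is characterised by being the unique indecomposable summand extending $\mathbf{1}_{U(\mathbb{R})}$, and that splitting off a skyscraper is detected exactly by the map $i_y^!\to i_y^*$. Since the construction of $\mathscr{E}$ is not visible here, I would first try to argue via Krull–Schmidt in the constructible derived category with $\mathbb{F}_2$ coefficients, which applies by the finite-dimensionality in Theorem \ref{Thm:Real geo extension}(3).
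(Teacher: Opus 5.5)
Your proposal is correct and is essentially the paper's proof. Both rule out skyscraper summands at $y$ by showing that the natural map $i_y^!\pi_*\mathbf{1}\to i_y^*\pi_*\mathbf{1}$ vanishes; via base change and the Thom isomorphism this map is cup product $H^{*-1}(E(\mathbb{R}),\mathbb{F}_2)\to H^{*}(E(\mathbb{R}),\mathbb{F}_2)$ with $w_1$ of the trivial normal line bundle. Your derivation of $\dim F_y(\mathbb{R})=d-1$ from $H^{d-1}(E(\mathbb{R}),\mathbb{F}_2)\neq 0$ usefully fills in a step the paper only asserts; note only that $\mathscr{E}(Y_\mathbb{R})$ need not be indecomposable (e.g.\ when $Y(\mathbb{R})$ is disconnected), but your Step 1 never actually uses this.
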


\begin{remark}
The real threefold $V(x^2+y^2+z^2-w^2)\subset \mathbb{A}^4$ provides an example of the above proposition; it does not have a small resolution over the reals. It does have two small resolutions over $\mathbb{C}$ however, via the isomorphism with the cone on $\mathbb{P}^1\times \mathbb{P}^1$ in the Segre embedding. This pair of resolutions is swapped by the Galois action under this isomorphism, so neither descends to $\mathbb{R}$.
\end{remark}

In these special cases, $\mathscr{E}(Y_\mathbb{R})$ is equal to the pushforward of the constant sheaf from a particular resolution. Schubert varieties provide another class of varieties where these real geometric extensions may be described, for different reasons. Let $G$ be a split reductive group over $\mathbb{R}$, with Borel $B$, and flag variety $\mathscr{F}:=G/B$. The closures $X(w)$ of the $B$-orbits on $\mathscr{F}$ are the Schubert varieties, indexed by elements of the Weyl group $W$ of $G$. The parity sheaves of Juteau-Mautner-Williamson \cite{ParitysheavesJMW} (with $\mathbb{F}_2$ coefficients) are a distinguished class of sheaves on $\mathscr{F}(\mathbb{C})$, and each $X(w)(\mathbb{C})$ has a unique indecomposable parity sheaf $\mathscr{E}(X(w)_\mathbb{C})$ extending the (degree zero) constant sheaf over the smooth locus. These parity sheaves admit an intrinsic parity vanishing description, and they may also be characterised as the mod two geometric extensions of \cite{HoneWilliamson2025}, the minimal dense summands occurring in pushforwards for resolutions of singularities. We may describe the category of real geometric extensions on Schubert varieties using parity sheaves on the corresponding complex varieties.

\begin{thm}
If $X(w)$ is a Schubert variety, then the cohomology groups are isomorphic to those of the corresponding parity sheaf, divided by two:
\[\mathscr{E}^i(X(w)(\mathbb{R}),\mathbb{F}_2):=\mathbb{H}^i\mathscr{E}(X(w)_\mathbb{R})\cong \mathbb{H}^{2i}\mathscr{E}(X(w)_\mathbb{C})\]
Similarly, for $u\in W$, with $i_u:uB/B\to \mathscr{F}$, the stalk and costalk of $\mathscr{E}({X(w)}_\mathbb{R})$ at $u$ are isomorphic to those of the corresponding parity sheaf, divided by two:
\[H^i(i_u^*\mathscr{E}(X(w)_\mathbb{R}))\cong H^{2i}(i_u^*\mathscr{E}(X(w)_\mathbb{C}))\]
    \[H^i(i_u^!\mathscr{E}(X(w)_\mathbb{R}))\cong H^{2i}(i_u^!\mathscr{E}(X(w)_\mathbb{C}))\]
The above isomorphisms follow from an equivalence of categories \[\mathrm{Geo}(\mathscr{F}(\mathbb{R}),\mathbb{F}_2)\cong \mathrm{Par}_{ev}(\mathscr{F}(\mathbb{C}),\mathbb{F}_2)\] which doubles degrees, where $\mathrm{Geo}(\mathscr{F}(\mathbb{R}),\mathbb{F}_2)$ is the additive category of sums and shifts of the sheaves $\mathscr{E}(X(w)_\mathbb{R})$ on the real flag manifold, and $\mathrm{Par}_{ev}(\mathscr{F}(\mathbb{C}),\mathbb{F}_2)$ the corresponding category of even parity sheaves. 
\end{thm}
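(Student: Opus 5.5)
The plan is to deduce everything from the equivalence of categories, and to prove that equivalence with Bott--Samelson resolutions and a degree-doubling comparison of their real and complex points. For a reduced expression $\underline{w}=(s_1,\dots,s_n)$, let $BS(\underline{w})=P_{s_1}\times^B\cdots\times^B P_{s_n}/B$ and let $f:BS(\underline{w})\to \mathscr{F}$ be the multiplication map. On $\mathscr{F}(\mathbb{R})$ I would consider the full additive subcategory $\mathcal{BS}_\mathbb{R}$ of direct sums of shifts of $Rf_*\mathbf{1}_{BS(\underline{w})(\mathbb{R})}$, and on $\mathscr{F}(\mathbb{C})$ the analogous $\mathcal{BS}_\mathbb{C}$. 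By the JMW theory, the Karoubian envelope of $\mathcal{BS}_\mathbb{C}$ is $\mathrm{Par}_{ev}(\mathscr{F}(\mathbb{C}),\mathbb{F}_2)$. Property (1) of Theorem \ref{Thm:Real geo extension} says $\mathscr{E}(X(w)_\mathbb{R})$ is a summand of $Rf_*\mathbf{1}$. I would therefore show that every indecomposable summand of an object of $\mathcal{BS}_\mathbb{R}$ is a shifted $\mathscr{E}(X(u)_\mathbb{R})$, and that the Karoubian envelope of $\mathcal{BS}_\mathbb{R}$ is $\mathrm{Geo}$. The last claim uses the minimality characterisation of $\mathscr{E}$ as the unique summand supported densely on $X(w)(\mathbb{R})$.

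The key step is a degree-doubling isomorphism of Hom spaces:
\[\mathrm{Hom}^{i}(Rf_*\mathbf{1}_{BS(\underline{x})(\mathbb{R})},Rf_*\mathbf{1}_{BS(\underline{y})(\mathbb{R})})\cong \mathrm{Hom}^{2i}(Rf_*\mathbf{1}_{BS(\underline{x})(\mathbb{C})},Rf_*\mathbf{1}_{BS(\underline{y})(\mathbb{C})}),\]
compatible with composition. Both sides are the Borel--Moore homology of the fibre product $BS(\underline{x})\times_{\mathscr{F}}BS(\underline{y})$, with the convolution product on the right. This fibre product is paved by affine spaces: the paving comes from the $B$-orbit stratification pulled back, and the real points of each cell are $\mathbb{R}^k$ where the complex points are $\mathbb{C}^k$. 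Over $\mathbb{F}_2$, both Borel--Moore homologies are therefore free on cell closures, with real degree $k$ matching complex degree $2k$. Compatibility with convolution would follow from a ``conjugation space'' or Hausmann--Holm--Puppe argument: the map $H^{2*}(Z(\mathbb{C}))\to H^*(Z(\mathbb{R}))$ halves degrees and respects products and the relevant pushforwards and pullbacks. Alternatively, one can compare the $\mathbb{F}_2$ cellular Chow/Borel--Moore groups, which are algebraic and defined for both sides by the same cycle classes. This yields a degree-doubling fully faithful functor $\mathcal{BS}_\mathbb{R}\to\mathcal{BS}_\mathbb{C}$, essentially surjective onto Bott--Samelson objects, and hence an equivalence of Karoubian envelopes. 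Since idempotents correspond, indecomposables correspond. The dense summand of $Rf_*\mathbf{1}$ on $X(w)(\mathbb{R})$ is sent to the dense summand on $X(w)(\mathbb{C})$, so $\mathscr{E}(X(w)_\mathbb{R})\mapsto\mathscr{E}(X(w)_\mathbb{C})$.

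The numerical statements then follow. $\mathbb{H}^i$ is $\mathrm{Hom}^i$ from the constant sheaf on a point stratum $X(e)$, i.e.\ from the Bott--Samelson object of the empty word, pushed by $\mathscr{F}\to\mathrm{pt}$; equivalently, both sides are Hom from $Rf_*\mathbf{1}_{BS(\underline{w})}$-idempotent images into the skyscraper at the base point, and degrees double. For stalks and costalks at $u$, I would use that $i_u^*$ and $i_u^!$ of Bott--Samelson pushforwards are computed by the fibres $f^{-1}(uB)$. These fibres are again paved by affines compatibly, with real cells of half the complex dimension, so the doubled identification holds and is compatible with idempotents.

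The main obstacle is multiplicativity of the degree-halving comparison on Borel--Moore homology of the fibre products, i.e.\ checking that the convolution structures match, not just the dimensions. I would first try to prove it by realising both sides as $\mathbb{F}_2$-Chow groups of the cellular varieties $BS(\underline{x})\times_\mathscr{F} BS(\underline{y})$. For such cellular varieties the real and complex cycle class maps are both isomorphisms, and Chow convolution is defined uniformly, which gives compatibility with composition.
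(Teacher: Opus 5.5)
Your construction of the equivalence is the paper's. It uses Bott--Samelson objects, with morphisms given by Borel--Moore homology of $BS(\underline{x})\times_{\mathscr{F}}BS(\underline{y})$. Affine pavings make the real and complex cycle class maps from $\mathrm{CH}_\bullet\otimes\mathbb{F}_2$ isomorphisms compatible with composition, and you then pass to Karoubian envelopes. One small point: pulling back the Bruhat stratification only reduces the paving to the Bott--Samelson fibres over the points $uB$, which the paper paves by a Bia\l ynicki-Birula decomposition.

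The trouble is in extracting the numerical statements. First, your identification of hypercohomology is wrong. The Bott--Samelson object of the empty word is the skyscraper $i_{e*}\mathbf{1}$. Then $\mathrm{Hom}^k(i_{e*}\mathbf{1},\mathcal{F})\cong H^k(i_e^!\mathcal{F})$ is the costalk at $e$, and $\mathrm{Hom}^k(\mathcal{F},i_{e*}\mathbf{1})\cong H^{-k}(i_e^*\mathcal{F})^\vee$ is the dual of the stalk at $e$. For a simple reflection $s$, $\mathscr{E}(X(s)_\mathbb{R})=\mathbf{1}_{\mathbb{P}^1(\mathbb{R})}$ has hypercohomology in degrees $0$ and $1$, but its stalk and costalk at $e$ are each one-dimensional. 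By adjunction $\mathbb{H}^k(\mathcal{F})\cong\mathrm{Hom}^k(\mathbf{1}_{\mathscr{F}},\mathcal{F})$, so the corepresenting object is $\mathbf{1}_{\mathscr{F}}=\mathscr{E}(X(w_0))$. This object lies in both categories and is matched by the equivalence as the unique indecomposable of full support; the paper uses exactly this.

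Second, for stalks and costalks at $u\neq e$, the phrase ``compatible with idempotents'' hides the real content. Matching dimensions of $H^i(f^{-1}(uB)(\mathbb{R}))$ and $H^{2i}(f^{-1}(uB)(\mathbb{C}))$ says nothing about the summand $\mathscr{E}(X(w))$ on its own. You need the identification to intertwine the actions of $\End(f_*\mathbf{1})\cong H^{\mathrm{BM}}(BS\times_{\mathscr{F}}BS)$ on the two fibre cohomologies. That is, the real and complex cycle class maps must be compatible with restricting correspondences to the fibre over $uB$, which is a refined Gysin pullback. An equivalence of categories over all of $\mathscr{F}$ cannot supply this, because $i_u^*$ and $i_u^!$ are not (co)representable in that category when $uB$ is not a closed orbit.

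The paper's remedy is to set up the comparisons $C_{\mathbb{R}}(U)\leftarrow C_{\mathrm{CH}}(U)\to C_{\mathbb{C}}(U)$ for every upward-closed $U\subset W$. These are compatible with the restriction functors $\overline{C}(V)\to\overline{C}(U)$ through the localisation sequences. On $\mathscr{F}_{U_{\geq u}}$ the cell $X^\circ(u)$ is closed. The stalk and costalk at $u$ then become intrinsic expressions, namely kernels of restriction maps on Hom spaces against the constant object along $\overline{C}(U_{\geq u})\to\overline{C}(U_{>u})$. The equivalences transport these with degrees doubled. You need this device, or a direct proof of the Gysin compatibility, to finish the stalk and costalk statements.
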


We prove the equivalence in the above theorem by taking advantage of the motivic simplicity of Schubert varieties and their resolutions; they are paved by affine spaces. For any variety $Y$, we also define a variant $\mathscr{E}_{num}(Y_\mathbb{R})$ of our primary construction using the real cycle class map from Chow groups to Borel-Moore homology \cite{BorelHaefliger}. This $\mathscr{E}_{num}(Y_\mathbb{R})$ provides a Zariski-local version of $\mathscr{E}(Y_\mathbb{R})$.

\begin{prop}
Let $Y$ be a real variety of dimension $d$ with smooth real point. There exists a complex of sheaves $\mathscr{E}_{num}(Y_\mathbb{R})$ on $Y(\mathbb{R})$ with the following properties:
\begin{itemize}
     \item The sheaf $\mathscr{E}_{num}(Y_\mathbb{R})$ is Zariski-local, for $U\subset Y$ open, the restriction of $\mathscr{E}_{num}(Y_\mathbb{R})$ gives $\mathscr{E}_{num}(U_\mathbb{R})$:\[\mathscr{E}_{num}(Y_\mathbb{R})|_{U(\mathbb{R})}\cong \mathscr{E}_{num}(U_\mathbb{R})\]
     \item The complex $\mathscr{E}_{num}(Y_\mathbb{R})$ is $d$-shifted Verdier self-dual, its cohomology sheaves are constructible, and the real geometric extension is a direct summand of $\mathscr{E}_{num}(Y_\mathbb{R})$.

    \item For any resolution $f:X\to Y$, $\mathscr{E}_{num}(Y_\mathbb{R})$ is a direct summand of $f_*\mathbf{1}_{X(\mathbb{R})}$ such that the idempotent cutting it out is in the image of the real cycle class map: \[\mathrm{CH}_d(X\times_Y X)\to H^{\mathrm{BM}}_d(X(\mathbb{R})\times_{Y(\mathbb{R})}X(\mathbb{R}),\mathbb{F}_2) \to \End(f_*\mathbf{1}_{X(\mathbb{R})})\]
    
\end{itemize}

\end{prop}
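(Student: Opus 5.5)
Fix a resolution $f:X\to Y$ (exists by Hironaka). I would define $\mathscr{E}_{num}(Y_\mathbb{R})$ as the summand of $Rf_*\mathbf{1}_{X(\mathbb{R})}$ cut out by a primitive idempotent in the image $A_f$ of the real cycle class composite
\[\mathrm{CH}_d(X\times_Y X)\to H^{\mathrm{BM}}_d(X(\mathbb{R})\times_{Y(\mathbb{R})}X(\mathbb{R}),\mathbb{F}_2)\cong \End(Rf_*\mathbf{1}_{X(\mathbb{R})}).\]

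\textbf{Step 1: $A_f$ is a finite dimensional subalgebra.} The isomorphism is the standard one, $\End(Rf_*\mathbf{1})\cong H^{\mathrm{BM}}_d$ of the real fibre product, via base change and Verdier duality on the $d$-manifold $X(\mathbb{R})$. Composition corresponds to the convolution of correspondences. The real cycle class map is compatible with refined intersection products, flat pullback and proper pushforward (Borel–Haefliger plus Fulton's formalism), so $A_f$ is closed under composition. It is finite dimensional because $\End$ is: the cohomology of $Rf_*\mathbf{1}$ is constructible. Transposition of cycles preserves $A_f$ and corresponds to the Verdier-dual endomorphism.

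\textbf{Step 2: isolate the dense summand.} Restricting to the smooth locus $Y^{sm}$ gives an algebra map $A_f\to \End(\mathbf{1}_{Y^{sm}(\mathbb{R})})$. The class of the closure of $\Delta(Y^{sm})$ maps to $1$ there. Lifting idempotents in the finite dimensional algebra $A_f$, I obtain a primitive idempotent $e\in A_f$ whose restriction to $Y^{sm}$ is the identity on the relevant component. Define $\mathscr{E}_{num}(Y_\mathbb{R})=e\,Rf_*\mathbf{1}$; this gives the third bullet for this $f$ and constructibility.

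\textbf{Step 3: independence of $f$ and the remaining bullets.} For two resolutions, dominate both by a third, $g:X'\to X$ over $Y$. Then I would show that $Rf_*\mathbf{1}$ is a summand of $R(fg)_*\mathbf{1}$ cut out by algebraic cycles. Concretely, the graph of $g$ and its transpose give maps $Rf_*\mathbf{1}\to R(fg)_*\mathbf{1}\to Rf_*\mathbf{1}$ whose composite is $g_*[X']=[X]$, the identity (degree one, since $g$ is birational). Hence the primitive dense idempotents correspond, and a Krull–Schmidt argument for the cycle algebras gives uniqueness up to isomorphism. This yields the third bullet for every resolution.

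The remaining bullets then follow:

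\begin{itemize}
\item \emph{Self-duality.} Apply transposition: $e^t$ is again a dense primitive idempotent of $A_f$, hence conjugate to $e$. Combined with $\mathbb{D}Rf_*\mathbf{1}\cong Rf_*\mathbf{1}[d]$, this gives $\mathbb{D}\mathscr{E}_{num}\cong\mathscr{E}_{num}[d]$.
\item \emph{Zariski locality.} Restrict $f$ to $f^{-1}(U)$. The map $\mathrm{CH}_d(X\times_YX)\to\mathrm{CH}_d$ of the restriction is surjective by the localization sequence, so $A_f$ surjects onto $A_{f|_U}$ compatibly with restriction of endomorphisms. A dense primitive idempotent then restricts to a dense idempotent, and I must show it stays primitive.
\item \emph{Containing $\mathscr{E}(Y_\mathbb{R})$.} Since $e\,Rf_*\mathbf{1}$ is a summand of $Rf_*\mathbf{1}$ containing the generic constant sheaf, it contains the minimal such summand $\mathscr{E}(Y_\mathbb{R})$ of Theorem \ref{Thm:Real geo extension}.
\end{itemize}

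The main obstacle is the primitivity claim under restriction in the Zariski-locality bullet. Surjectivity of the algebras means idempotents of $A_{f|_U}$ lift to $A_f$, but only after lifting idempotents modulo the kernel. I would argue that this kernel consists of classes supported over $Y\setminus U$, which act as a two-sided ideal. The kernel is not nilpotent in general, so I would instead lift a decomposition of $e|_U$ directly: lift each idempotent summand to an element of $eA_fe$, then use the fact that $eA_fe$ is local, so any element is either invertible or nilpotent. A nontrivial idempotent summand of $e|_U$ would lift to an element that is neither, which gives a contradiction. Whether invertibility passes correctly between $U$ and $Y$ is the point I expect to require care.
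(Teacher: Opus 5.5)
Your proposal is correct and follows essentially the paper's route (cycle-induced splittings via the graph of $g$ and its transpose, Krull--Schmidt for the finite-dimensional algebra $A_f$, and surjectivity of Chow localisation for Zariski-locality). The step you flag is immediate: $eA_fe\to e|_U A_{f|_U}e|_U$ is a surjective ring map, so it sends units to units and nilpotents to nilpotents. In other words, a nonzero quotient of a local ring is local, which is exactly the paper's argument.

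One small sharpening concerns your phrase ``on the relevant component.'' Since $Y$ is irreducible, the image of $A_f$ in $\End(\mathbf{1}_{Y^{sm}(\mathbb{R})})$ is just $\mathbb{F}_2\cdot 1$, because the only $d$-cycle over the locus where $f$ is an isomorphism is the diagonal. So your dense primitive idempotent restricts to the identity on every component of $Y^{sm}(\mathbb{R})$. This is what the containment of $\mathscr{E}(Y_\mathbb{R})$ and the locality of $eA_fe$ actually use.
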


This variant of the construction has better formal properties, but is less explicit due to the difficulty computing  image of the cycle class map. While we expect examples to exist, we don't know of any cases where these two constructions differ.
\subsection{Comparison with IC and the decomposition theorem}

The context and motivation for these constructions is the theory of intersection cohomology for complex varieties. In this context, for $Y$ an irreducible complex variety, the rational intersection cohomology sheaf arises in two distinct ways:

\begin{enumerate}
     \item[Intrinsic)] The sheaf $\mathbf{IC}(Y(\mathbb{C}),\mathbb{Q})$ is the unique shifted Verdier self-dual sheaf on $Y(\mathbb{C})$ extending the constant sheaf on the smooth locus satisfying the perversity bound on the cohomology of its stalks \cite[\S 2]{deCataldo2009decomposition}. More structurally, it is the unique simple object of dense support in the abelian subcategory of perverse sheaves within $D^b_c(Y(\mathbb{C}),\mathbb{Q})$.
    
    \item[Extrinsic)] The sheaf $\mathbf{IC}(Y(\mathbb{C}),\mathbb{Q})$ is the unique dense summand of $f_*\mathbf{1}_{X(\mathbb{C})}$ for any resolution of singularities $f:X\to Y$. 
   
\end{enumerate}

That these two constructions describe the same object is a corollary of the decomposition theorem of Beilinson-Bernstein-Deligne-Gabber \cite{BBD}, a deep result on the topology of algebraic maps (see \cite{deCataldo2009decomposition} for an excellent survey). With positive-characteristic coefficients, the decomposition theorem fails, and the intersection cohomology is not necessarily this dense summand of $f_*\mathbf{1}_{X(\mathbb{C})}$ for any resolution.

The real geometric extension is a direct generalisation of this extrinsic definition, and thus carries some key differences from intersection cohomology. For instance, $\mathscr{E}(Y_\mathbb{R})$ may have nonscalar automorphisms, so the choice of autoduality on $\mathscr{E}(Y_\mathbb{R})$ does not reduce to a fundamental class. While the lack of a perverse t-structure makes $\mathscr{E}(Y_\mathbb{R})$ harder to describe, its stalks are not constrained by the perversity bound and can carry more information about resolutions (see Proposition \ref{Prop:real blowup minimal}).

In view of the prior definitions of real intersection cohomology in \cite{Realintersectionhomology}, \cite{VanHamel}, we view the groups $\mathscr{E}^i(Y(\mathbb{R}),\mathbb{F}_2)$ as providing an alternative solution to the original problem of Goresky-MacPherson, taking a more extrinsic approach.

\subsection{Comparison to existing constructions}
This problem of real intersection cohomology has been previously addressed by  McCrory-Parusiński in \cite{Realintersectionhomology} and van Hamel in \cite{VanHamel}. These alternate constructions share key features with ours: they each give the correct cohomology groups for real varieties admitting a small resolution, provide a factorisation of the cycle class map, and recover ordinary cohomology for smooth varieties. In other key respects these existing constructions are quite different from ours.
\\
The approach of McCrory-Parusiński is explicitly geometric and parallels the original definition of Goresky-MacPherson. For a real algebraic variety $Y$ (or semialgebraic set) they define a complex of flasque sheaves by taking closed semialgebraic subsets with mod two relations \[[A]+[B]=[A\cup B] + [A\cap B].\] For a suitable stratification $S$ of $Y(\mathbb{R})$, they define a subcomplex by imposing intersection-dimension restrictions on chains, and show the resulting cohomology groups are independent of the stratification chosen. The cohomology groups of this complex are their real intersection cohomology groups. These groups come with a canonical intersection pairing, and they show that this is perfect for isolated singularities, but is not perfect in general \cite{MCCRORY2020107050}. It is also not known in general whether their groups are finitely generated.
\\
The approach of van Hamel is based on viewing $Y(\mathbb{R})$ as the fixed points of the conjugation involution within $Y(\mathbb{C})$. Working on a general pseudomanifold with involution, one has a derived category of $C_2$-equivariant sheaves, and the equivariant intersection cohomology sheaf $\mathbf{IC}_{C_2}(Y(\mathbb{C}),\mathbb{F}_2)$. The group cohomology ring of $C_2$ then acts on the equivariant cohomology groups of this sheaf, and the (total) real intersection cohomology is defined as \[\bigg{(}\bigoplus_{i\in \mathbb{Z}}\mathbb{H}^i\mathbf{IC}_{C_2}(Y(\mathbb{C}),\mathbb{F}_2)\bigg{)}/(1-\eta)\] for $\eta\in H^1(C_2,\mathbb{F}_2)$. As $1-\eta$ is not homogeneous, this total group does not carry a grading, but the $C_2$-equivariant localisation theorem shows that this depends only on the restriction to the fixed points.

In the language of Treumann's Smith theory for sheaves \cite{Treumann2019}, this may be described sheaf theoretically as taking the Smith restriction of intersection cohomology to the fixed points. The self-duality and finiteness properties of van Hamel's construction are then implied by the good properties of Smith restriction for sheaves.

We view our construction as defining the natural generalisation of geometric extensions and parity sheaves to the real setting, complementing these existing constructions of real intersection cohomology.
\subsection{Summary of proof}

The strategy for constructing $\mathscr{E}(Y_\mathbb{R})$ follows that of \cite{HoneWilliamson2025}, \cite{Mcnamara}, incorporating a simplification of Bezrukavnikov. One needs to show that there is a certain direct summand occurring in the (derived) pushforward of the constant sheaf along \emph{any} resolution of singularities. The existence of this sheaf $\mathscr{E}(Y_\mathbb{R})$ is a consequence of the following three facts.

\begin{enumerate}
    \item The derived category of sheaves is Krull-Schmidt, so for any map $f:X\to Y$, the pushforward $f_*\mathbf{1}_{X(\mathbb{R})}$ decomposes uniquely into a finite direct sum of indecomposable objects.
    \item For any proper birational map $g:X'\to X$ of smooth varieties, the induced map \[\mathbf{1}_{X(\mathbb{R})}\to g_*\mathbf{1}_{X'(\mathbb{R})}\] is a split monomorphism in $D^b_c(X(\mathbb{R}),\mathbb{F}_2)$.
    \item Any two resolutions of $Y$ are dominated by a third.
\end{enumerate}
To deduce the existence of $\mathscr{E}(Y_\mathbb{R})$ from these properties, note that if \[X'\xrightarrow{g} X\xrightarrow{f} Y\] are proper birational with $X$ and $X'$ smooth, then $g_*\mathbf{1}_{X'(\mathbb{R})}\cong \mathbf{1}_{X(\mathbb{R})}\oplus Q$ for $Q$ a sheaf of non-dense support. So we have \[(g\circ f)_*\mathbf{1}_{X'(\mathbb{R})}\cong f_*\mathbf{1}_{X(\mathbb{R})}\oplus f_*Q\] where $f_*Q$ has support of dimension less than $d$. The uniqueness of decompositions of summands in Krull-Schmidt categories implies the sum of the summands of full dimensional support is independent of the resolution, by the domination property for resolutions.

To construct the motivic variant $\mathscr{E}_{num}(Y_\mathbb{R})$, one adapts the previous proof, noting that the splittings of Corollary \ref{Cor:Splitting} are induced by cycles under the real cycle class map.

Once this independence is known, the other properties of $\mathscr{E}(Y_\mathbb{R})$ are simple consequences of the derived formalism.

\subsection{Structure of this paper}
In Section \ref{Sec:background} we recall some of the background notions we will need. We first recall the sheaf theoretic approach to cohomology via the six functor formalism of constructible sheaves, then recall some results on the topology of real algebraic varieties. In Section \ref{Sec:Construction} we give the construction of the real geometric extension $\mathscr{E}(Y_\mathbb{R})$, its variant $\mathscr{E}_{num}(Y_\mathbb{R})$ and verify its basic properties. In Section \ref{Sec:Examples and applications} we give examples and applications of this construction, and in Section \ref{Sec:Further questions} we conclude with some natural further questions.\\
\par
\emph{Acknowledgments:}
We would like to thank Geordie Williamson for valuable discussions and encouragement in this project, and Elden Elmanto for his infectious enthusiasm towards the real realisation of motives. We were supported by the Danish National Research Foundation through the Copenhagen Centre for Geometry and Topology (DNRF151). The contents of this paper are an adaptation of part of the author's PhD thesis.

\section{Sheaf theory on real algebraic varieties}\label{Sec:background}

In this section we recall some of the background theory we will need for our arguments. Real algebraic varieties are well-behaved from a point-set topological perspective, allowing the use of the six functor formalism of constructible complexes of $\mathbb{F}_2$ sheaves to understand their cohomology. As this formalism entails some categorical overhead, we begin with a quick introduction, with examples that will be useful in this paper. In the remainder of this section, we recall some standard properties of real algebraic varieties we will need. The simple Corollary \ref{Cor:Splitting} is the crucial result of this section needed for our later arguments.
\subsection{Cohomology and sheaf theory}
For a suitably nice topological space $X$, the cohomology of $X$ may be computed as the sheaf cohomology of the constant sheaf. Working in the derived category of all sheaves on $X$, one may use nonconstant sheaves to get at more refined geometric information. This framework is especially relevant for singular spaces, organising various cohomological constructions, and will be the home of our real geometric extension $\mathscr{E}(Y_\mathbb{R})$.

In this section we give a brief introduction to this formalism, suited for the purposes of this paper. For the cleanest setup, one needs some point set assumptions on the topological spaces involved, so for the rest of this section we assume the following properties of our spaces:
\begin{itemize}
    \item All spaces are locally compact Hausdorff of finite cohomological dimension.
    \item All maps $f:X\to Y$ are stratifiable, so pushforwards preserve constructibility.
\end{itemize}
To each space, we associate a category \[S_X:=D^b_c(X,\mathbb{F}_2),\] the derived category of complexes of sheaves of $\mathbb{F}_2$ vector spaces on $X$ with constructible cohomology sheaves. We will be using this category in a primarily formal way, and will refer to objects of this category as sheaves on $X$. We refer the reader to \cite{kashiwara2002sheaves} for a construction of these categories and the numerous functors between them. 

This category $S_X$ is additive, idempotent complete, and is triangulated, so has a shift functor $[1]$. For any map $f:X\to Y$, we have four induced functors between these sheaf categories:
\[\begin{tikzcd}
    S_X\arrow[rr,"f_*",shift left=1]\arrow[rr,"{f_!}"',shift right=1]&&S_Y \arrow[ll,bend left,"f^!"]\arrow[ll,bend right,"f^*"']
\end{tikzcd}\]
These come in two adjoint pairs $f^*\dashv f_*$ and $f_!\dashv f^!$, and the (co)units of adjunction supply many of the canonical maps between (co)homology groups of our spaces.

\begin{warning}
All of our functors are ``derived'' in the sense that they are defined between the derived sheaf categories. While $f_*$, $f_!$, and $f^*$ are derived from corresponding functors on abelian sheaves, $f^!$ is not, it is defined only at the derived level as a right adjoint to $f_!$.
\end{warning} 
The key properties of these functors may be seen in the example of the terminal map $t:X\to \ast$ from $X$ to the point.

\begin{ex}[The terminal map]\label{Ex:terminal map}
For the one-point space, $S_{\ast}$ is the bounded derived category of $\mathbb{F}_2$ vector spaces, and for $\mathbf{1}$ the one-dimensional space in degree zero, the cohomology of an object $\mathcal{F}:=\mathcal{F}^\bullet$ is given by \[H^i(\mathcal{F})=\hom_{S_\ast}(\mathbf{1},\mathcal{F}[i])\]
For the terminal map $t:X\to \ast$, the different pushforwards and pullbacks of $\mathbf{1}$ yield four different (co)homological measurements of $X$:
\begin{align*}
H^i(X,\mathbb{F}_2)\cong& \hom_{S_\ast}(\mathbf{1},{t}_*t^*\mathbf{1}[i]), \\
H^i_c(X,\mathbb{F}_2)\cong& \hom_{S_\ast}(\mathbf{1},{t}_!t^*\mathbf{1}[i]), \\
H_i(X,\mathbb{F}_2)\cong& \hom_{S_\ast}(\mathbf{1},{t}_!t^!\mathbf{1}[-i]), \\
H^{\mathrm{BM}}_i(X,\mathbb{F}_2)\cong& \hom_{S_\ast}(\mathbf{1},{t}_*t^!\mathbf{1}[-i]),
\end{align*}
where $H^i_c(X,\mathbb{F}_2)$ is compactly supported cohomology, and $H^{\mathrm{BM}}_i(X,\mathbb{F}_2)$ is Borel-Moore (or noncompact) homology.
For a general sheaf $\mathcal{F}$ in $S_X$, these terminal pushforwards are its (hyper)cohomology, potentially with compact supports:
\begin{align*}
    \hom_{S_\ast}(\mathbf{1},t_*\mathcal{F}[i]):=&\mathbb{H}^i(\mathcal{F})\\
    \hom_{S_\ast}(\mathbf{1},t_!\mathcal{F}[i]):=&\mathbb{H}^i_c(\mathcal{F})
\end{align*}

\end{ex}

\begin{defi}
We define the constant and dualising sheaves on $X$ to be the two pullbacks of $\mathbf{1}$ along this terminal map:
\begin{align*}
    \mathbf{1}_X:=&t^*\mathbf{1}\\
    \omega_X:=&t^!\mathbf{1}
\end{align*}
The above example shows that the cohomology of these sheaves computes the four flavours of cohomology of $X$.
\end{defi}
In addition to the units and counits of adjunction, certain topological properties of maps yield natural transformations between these functors. For a topologically separated map $f:X\to Y$ we have a comparison morphism of functors \[f_!\to f_*\] and this is an isomorphism if $f$ is topologically proper. Similarly, for an open inclusion (or more generally local homeomorphism) $j:U\to X$, we have a canonical isomorphism: \[j^!\xrightarrow{\sim} j^*.\] For an open subset $U$ of $X$, we call  $j^!\mathcal{F}\cong j^*\mathcal{F}$ the restriction of $\mathcal{F}$ to $U$, and write it as $\mathcal{F}|_U$. For closed inclusion $i:Z\to X$, the canonical (co)units yield isomorphisms: \[i^*i_*\xrightarrow{\sim}\text{Id}_{S_Z}\xrightarrow{\sim} i^!i_!\] and this implies $i_!\cong i_*$ is fully faithful. This gives a well-behaved notion of the support $supp(\mathcal{F})$ for a sheaf $\mathcal{F}$ in $S_X$, the smallest closed set $Z$ such that $\mathcal{F}\cong i_*\mathcal{F}'$ for some $\mathcal{F}'$ on $Z$.
\begin{ex}[Recollement sequences]\label{Ex:Recollement sequences}
The (co)units of adjunction interact with the triangulated structure of $S_X$ to produce the usual long exact sequences of the various flavours of (co)homology. For a sheaf $\mathcal{F}$ in $S_X$, and an open-closed decomposition $X=U\cup Z$, with open inclusion $j:U\to X$, closed inclusion $i:Z\to X$, the (co)units of adjunction supply two functorial triangles in $S_X$:
\begin{align*}
    i_!i^!\mathcal{F}&\to \mathcal{F}\to j_*j^*\mathcal{F}\to\\
    j_!j^!\mathcal{F}&\to \mathcal{F}\to i_*i^*\mathcal{F}\to
\end{align*}
Applying $t_*$ to the second sequence at $\mathcal{F}=\mathbf{1}_X$, one recovers the long exact sequence in cohomology, and gives the identification of $H^*(X,Z,\mathbb{F}_2)$ with $H^*(t_*j_!\mathbf{1}_U)$. Similarly, applying $t_*$ to the first sequence with $\mathcal{F}=\omega_X$, using $i_!\cong i_*$ and $j^!\cong j^*$ gives the localisation sequence in Borel-Moore homology:
\[H^{\mathrm{BM}}_*(Z,\mathbb{F}_2)\to H^{\mathrm{BM}}_*(X,\mathbb{F}_2)\to H^{\mathrm{BM}}_*(U,\mathbb{F}_2)\to H^{\mathrm{BM}}_{\ast-1}(Z,\mathbb{F}_2)\to\]
\end{ex}
This category $S_X$ carries a tensor product $\otimes$ and an internal $\Hom$, though for our purposes we will only need a special case of these, the Verdier duality functor: \[\mathbb{D}_X(\_):=\Hom(\_,\omega_X)\]
Our constructibility assumption implies that this functor is a duality, and it exchanges $!$ and $*$:
\begin{align*}
\mathbb{D}_X^2\cong\ & \text{Id}_{S_X}\\
    f_*\circ \mathbb{D}_X\cong\ & \mathbb{D}_Y\circ f_!\\
    f^*\circ \mathbb{D}_Y\cong\ & \mathbb{D}_X\circ f^!
\end{align*}
We may understand the utility of this abstract formalism in the case where our spaces are manifolds.
\begin{ex}[Manifolds]\label{ex:Manifolds}
For a $d$-dimensional manifold $M$, a fundamental class in (mod two) Borel-Moore homology induces an isomorphism $\mathbf{1}_M\to \omega_M[-d]$. Pushing forward to the point, this induces Poincaré duality:
\[H^i_c(M,\mathbb{F}_2)\cong \hom(\mathbf{1},t_!\mathbf{1}_M[i])\cong \hom(\mathbf{1},t_!\omega_M[i-d])\cong H_{d-i}(M,\mathbb{F}_2)\]
Similarly, for a proper map of manifolds $f:M\to N$ of dimensions $d_M$, $d_N$ respectively, the composition \[f_*\mathbf{1}_M \cong f_*\omega_M[-d_M]\cong f_!\omega_M[-d_M]\xrightarrow{\epsilon} \omega_N[-d_M]\cong \mathbf{1}_N[d_N-d_M]\] induces the usual Umkehr map in cohomology.

In the special case of a submanifold inclusion $i:C\to M$ of codimension $c$, the composition \[\mathbf{1}_C\cong i^*i_*\mathbf{1}_C\to i^*\mathbf{1}_M[c]\cong \mathbf{1}_C[c]\] classifies the top Stiefel-Whitney class of the normal bundle of $C$ in $M$.
\end{ex}

The final property of this six functor formalism we will need is base change. For a pullback diagram \begin{equation}\label{diagram:pullback}
    \begin{tikzcd}
    Z\times_Y X\arrow[r,"g'"]\arrow[d,"f'"]&X\arrow[d,"f"]\\
    Z\arrow[r,"g"]&Y
\end{tikzcd}
\end{equation}
base change refers to the following canonical isomorphisms of functors:
\[ g^*\circ f_!\cong (f')_!\circ (g')^*,\]
\[g^!\circ f_*\cong (f')_*\circ (g')^!.\]
In the special case of the constant sheaf, when $f$ is proper, and $Z$ is the inclusion $i_y$ of a point of $Y$, this first isomorphism allows one to interpret the stalk of the pushforward $f_*\mathbf{1}_X$ as the cohomology of the fibre $F_y$ over $y$:
\[i_y^*(f_*\mathbf{1}_X)\cong i_y^*(f_!\mathbf{1}_X)\cong (f|_{F_y})_!\mathbf{1}_{F_y}\cong (f|_{F_y})_*\mathbf{1}_{F_y}\cong H^*(F_y,\mathbb{F}_2)\]
Our final example of base change is that of the convolution isomorphism.
\begin{ex}[Convolution isomorphism]\label{Ex:conv iso}
With respect to the pullback square \ref{diagram:pullback}, consider the special case where $g$ is proper and $X$ is an oriented manifold of dimension $d$ with isomorphism $\mathbf{1}_X\cong \omega_X[-d]$. Then we may describe the maps between $g_*\mathbf{1}_Z$ and $f_*\mathbf{1}_X$ using the Borel-Moore homology of $Z\times_Y X$ via the following isomorphisms:
\begin{align*}
\hom(g_*\mathbf{1}_Z,f_*\mathbf{1}_X)&\cong\hom(g_!\mathbf{1}_Z,f_*\mathbf{1}_X)&&\!\!\!\!\!\!\!\!\!\!\!\!\!\!\!\!\!\!\!\cong \hom(\mathbf{1}_Z,g^!f_*\mathbf{1}_X)\\
&&&\!\!\!\!\!\!\!\!\!\!\!\!\!\!\!\!\!\!\!\cong \hom(\mathbf{1}_Z,(f')_*(g')^!\mathbf{1}_X)\\
&&&\!\!\!\!\!\!\!\!\!\!\!\!\!\!\!\!\!\!\!\cong\hom((f')^*\mathbf{1}_Z,(g')^!\mathbf{1}_X)\\
&&&\!\!\!\!\!\!\!\!\!\!\!\!\!\!\!\!\!\!\!\cong \hom((f')^*\mathbf{1}_Z,(g')^!\omega_X[-d])\\
&&&\!\!\!\!\!\!\!\!\!\!\!\!\!\!\!\!\!\!\!\cong \hom(\mathbf{1}_{Z\times_Y X},\omega_{Z\times_Y X}[-d])\\
&&&\!\!\!\!\!\!\!\!\!\!\!\!\!\!\!\!\!\!\!\cong H_d^{\mathrm{BM}}(Z\times_Y X,\mathbb{F}_2)
\end{align*}
This isomorphism transforms the problem of constructing morphisms between these sheaves into finding homology classes on the fibre product.

In the special case when $Y$ is a point, this is just the Künneth isomorphism and Poincaré duality: \begin{align*}
    \hom_{S_\ast}(H^*(Z),H^*(X))&\cong \bigoplus_i H^i(Z)^\vee \otimes H^i(X)\\
    &\cong\bigoplus_i H_i(Z)\otimes H^{\mathrm{BM}}_{d-i}(X)\cong H^{\mathrm{BM}}_d(Z\times X).
\end{align*}
\end{ex}
\subsection{Topological properties of real algebraic varieties}

In this section we review some topological properties of real algebraic varieties we will need. We refer the reader to \cite{bochnak1998real} for an introduction to real varieties.

A real variety $X$ is a reduced and irreducible separated scheme of finite type over $\mathbb{R}$. This has a locally compact Hausdorff topological space of real points $X(\mathbb{R})$, and algebraic maps between real algebraic varieties induce stratifiable maps of their associated topological spaces \cite[Ch. 9]{bochnak1998real}. We will therefore work within the formalism of constructible $\mathbb{F}_2$ sheaves of the previous section. From here, all cohomology will be taken with $\mathbb{F}_2$ coefficients when unspecified, and we will use $X$ vs $X(\mathbb{R})$ to differentiate between the variety $X$ and its topological space of real points.

Real semialgebraic sets (such as $Y(\mathbb{R})$) have a well-behaved notion of dimension, and this agrees with cohomological dimension as topological spaces (see \cite{delfs1991homology}). For $Y$ a real variety, the dimension of $Y(\mathbb{R})$ may not agree with the (Krull) dimension of $Y$, and for singular varieties, the topological dimension may depend on the connected component of $Y(\mathbb{R})$ chosen. We will be avoiding these complications by restricting to the class of real varieties which have a smooth real point.

The following proposition gives the sheaf theoretic interpretation of this condition.

\begin{prop}\label{Prop:smooth real point}
Let $Y$ be a $d$-dimensional real algebraic variety, with $Y(\mathbb{R})_i$ a connected component of $Y(\mathbb{R})$. Then the following are equivalent:
    \begin{enumerate}
        \item $Y(\mathbb{R})_i$ contains a smooth real point.
        \item For some $y\in Y(\mathbb{R})_i$, we have $H^{-d}(i_y^*\omega_{Y(\mathbb{R})_i})\neq 0$. 
    \end{enumerate}
\end{prop}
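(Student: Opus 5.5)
The plan is to compute the stalk $i_y^*\omega_{Y(\mathbb{R})_i}$ in degree $-d$ as local Borel--Moore homology, and then to use semialgebraic dimension theory for the converse. By base change for the open inclusion of a small open ball $B_y$ around $y$ (conic structure for semialgebraic sets), we have
\[H^{-d}(i_y^*\omega) \cong \varinjlim_{U\ni y} H^{\mathrm{BM}}_d(U) \cong H^{\mathrm{BM}}_d(B_y) \cong H_d(Y(\mathbb{R}),Y(\mathbb{R})\setminus y),\]
the local homology group at $y$. I will freely pass to the connected component $Y(\mathbb{R})_i$: it is open and closed in $Y(\mathbb{R})$, so its dualising sheaf is the restriction of $\omega_{Y(\mathbb{R})}$.

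For (1)$\Rightarrow$(2), take $y\in Y(\mathbb{R})_i$ a smooth real point of $Y$. Near $y$, $Y(\mathbb{R})$ is a real analytic manifold of dimension $d$ by the implicit function theorem, so $\omega$ restricted to a neighbourhood is $\mathbf{1}[d]$ (Example~\ref{ex:Manifolds}, mod two orientation). Hence $H^{-d}(i_y^*\omega)\cong \mathbb{F}_2\neq 0$.

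For (2)$\Rightarrow$(1), suppose $H^{-d}(i_y^*\omega)\neq 0$, i.e. $H_d(B_y,B_y\setminus y)\neq 0$. By the cohomological-dimension agreement for semialgebraic sets (cf.\ \cite{delfs1991homology}), local homology in degree $d$ can only be nonzero if the semialgebraic dimension of $Y(\mathbb{R})\cap B_y$ is at least $d$; since $\dim Y(\mathbb{R})\le \dim Y = d$, it is exactly $d$. Now $Y^{sing}$ is a proper Zariski closed subset, so $Y^{sing}(\mathbb{R})$ has semialgebraic dimension at most $d-1$. Therefore $(Y(\mathbb{R})\setminus Y^{sing}(\mathbb{R}))\cap B_y$ is nonempty, since otherwise $B_y\cap Y(\mathbb{R})$ would have dimension below $d$. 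This produces a smooth real point in every neighbourhood of $y$, hence in $Y(\mathbb{R})_i$.

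The main obstacle is the dimension step: showing rigorously that nonvanishing of $H^{\mathrm{BM}}_d$ of a small neighbourhood forces local semialgebraic dimension $d$. I would argue with a triangulation of $Y(\mathbb{R})$ compatible with $y$ and $Y^{sing}(\mathbb{R})$. If all simplices near $y$ had dimension $<d$, the simplicial Borel--Moore chains near $y$ would vanish in degree $d$, so the local homology in degree $d$ would be zero. Hence some $d$-simplex contains $y$, and its open interior has dimension $d$ and cannot lie inside $Y^{sing}(\mathbb{R})$. That interior therefore meets the smooth locus, which gives the required smooth point.
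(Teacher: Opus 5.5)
Your proof is correct and follows essentially the same route as the paper: $(1)\Rightarrow(2)$ comes from the local manifold structure at a smooth real point, and $(2)\Rightarrow(1)$ holds because $Y^{sing}(\mathbb{R})$ has dimension at most $d-1$, which is incompatible with nonvanishing local (co)homology in degree $d$ at $y$. The only difference is that the paper argues by contradiction and cites the agreement of semialgebraic and cohomological dimension, whereas you localise at $y$ and justify that step directly by a triangulation with $y$ as a vertex, which incidentally shows that $y$ lies in the closure of the smooth real locus.
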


\begin{proof}
    $1)\implies 2)$: Letting $y$ be this smooth point, the stalk of the dualising sheaf is one-dimensional, concentrated in degree $-d$.\\
    $2)\implies 1)$: If $Y^{sm}(\mathbb{R})$ were empty, then $Y(\mathbb{R})=(Y\setminus Y^{sm})(\mathbb{R})$, so has cohomological dimension $<d$, which contradicts the nonvanishing of \[H^d_{\{y\}}(Y(\mathbb{R}),\mathbb{F}_2)\cong H^d(i_y^!\mathbf{1}_{Y(\mathbb{R})_i})\cong H^{-d}(i_y^*\omega_{Y(\mathbb{R})_i})^\vee.\]
\end{proof}

\begin{lem}
Let $g:W\to X$ be a proper birational map between smooth $d$-dimensional real varieties, each with a smooth real point. Then the induced map on real points $g:W(\mathbb{R})\to X(\mathbb{R})$ is topologically proper, surjective, and an isomorphism over a topologically dense open subset of $X(\mathbb{R})$.
\end{lem}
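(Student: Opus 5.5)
We treat the three claims in turn: properness, density of the isomorphism locus, and surjectivity (which uses the first two).

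\emph{Properness.} Since $g$ is proper as a morphism of real varieties, the induced map $g(\mathbb{C}):W(\mathbb{C})\to X(\mathbb{C})$ is topologically proper, by the standard comparison between algebraic properness and properness in the analytic topology. Now $W(\mathbb{R})$ is the closed subset of $W(\mathbb{C})$ fixed by complex conjugation, and
\[W(\mathbb{R}) = g(\mathbb{C})^{-1}(X(\mathbb{R}))\cap W(\mathbb{R}).\]
Hence for a compact $K\subset X(\mathbb{R})$, the preimage $g^{-1}(K)\cap W(\mathbb{R})$ is a closed subset of the compact set $g(\mathbb{C})^{-1}(K)$, and is therefore compact. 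This shows $g:W(\mathbb{R})\to X(\mathbb{R})$ is topologically proper.

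\emph{Dense isomorphism locus.} Because $g$ is birational, there is a Zariski dense open $U\subset X$ over which $g$ restricts to an isomorphism $g^{-1}(U)\to U$. On real points this gives a homeomorphism $g^{-1}(U)(\mathbb{R})\to U(\mathbb{R})$. It remains to show $U(\mathbb{R})$ is dense in $X(\mathbb{R})$. Set $Z=X\setminus U$, a proper closed subvariety, so $\dim Z<d$. The point is that $X$ is smooth, so $X(\mathbb{R})$ is a $d$-dimensional manifold; it is nonempty since $X$ has a real point. The set $Z(\mathbb{R})$ is a semialgebraic subset of dimension at most $\dim Z<d$, and a semialgebraic set of dimension $<d$ has empty interior in a $d$-manifold: by the dimension theory of \cite{bochnak1998real}, any nonempty open subset of $X(\mathbb{R})$ has dimension $d$. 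Thus $U(\mathbb{R})=X(\mathbb{R})\setminus Z(\mathbb{R})$ is a topologically dense open subset.

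\emph{Surjectivity.} The image $g(W(\mathbb{R}))$ contains $U(\mathbb{R})$. Since $g$ is proper between locally compact Hausdorff spaces, it is a closed map, so $g(W(\mathbb{R}))$ is closed. A closed set containing the dense set $U(\mathbb{R})$ is all of $X(\mathbb{R})$.

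The main obstacle is the density step. Density genuinely requires the smoothness of $X$, or at least that every component of $X(\mathbb{R})$ has local dimension $d$. Without it, $X(\mathbb{R})$ could have a lower-dimensional piece lying entirely inside $Z(\mathbb{R})$, as for the Whitney umbrella handle, and then surjectivity would fail. I would cite semialgebraic dimension theory, \cite{bochnak1998real} or \cite{delfs1991homology}, for the fact that a semialgebraic subset of dimension $<d$ has empty interior in a $d$-manifold. The hypothesis on $W$ is used only so that $W(\mathbb{R})$ is nonempty. Alternatively, note that $g^{-1}(U)(\mathbb{R})$ is already nonempty whenever $U(\mathbb{R})$ is, which the density argument shows.
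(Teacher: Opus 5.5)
Your proof is correct and follows the same route as the paper: properness is inherited from the properness of $g$ over $\mathbb{C}$ by restricting to the closed subspaces of real points, $U(\mathbb{R})$ is dense because $Z(\mathbb{R})$ has dimension $<d$ inside the $d$-manifold $X(\mathbb{R})$, and surjectivity follows because a closed map with dense image is onto. You supply more detail than the paper's three-line argument (the compactness check and the semialgebraic dimension input), and your remarks that density genuinely needs the smoothness of $X$ and that the hypothesis on $W$ is redundant are also right.
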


\begin{proof}
The induced map on real points is topologically proper as it is proper over $\mathbb{C}$, and $X(\mathbb{R})$ and $W(\mathbb{R})$ are locally compact Hausdorff. As $g$ is birational, it is an isomorphism $g^{-1}(U)\to U$ for some $U$ open in $X$. The complement $Z=X\setminus U$ is of dimension $<d$, so $U(\mathbb{R})$ is dense in each component of the $d$-dimensional manifold $X(\mathbb{R})$. As the map is closed with dense image, it is surjective.
\end{proof}
\begin{remark}
    The surjectivity on real points may be false if $X$ is not assumed to be smooth. This can be seen from the isolated point $(0,0)$ on the singular plane curve $y^2=x^2(x-1)$ and its normalisation.
\end{remark}

A fundamental class of a $d$-dimensional manifold $M$ may be viewed as an isomorphism $\mathbf{1}_M\to \omega_M[-d]$. For a $d$-dimensional space $V$ with dense open submanifold $V^\circ$, a fundamental class of $V$ is defined to be a class in $H_d^{\mathrm{BM}}(V)$ restricting to a fundamental class of $V^\circ$. Sheaf-theoretically, this is a morphism \[\mathbf{1}_V\to \omega_V[-d]\] which restricts to an isomorphism over $V^\circ$. Real varieties admit unique, compatible fundamental classes, and we recall the short proof using resolution of singularities.

\begin{prop}
Let $Y$ be a $d$-dimensional real algebraic variety with smooth point. Then $Y(\mathbb{R})$ admits a unique fundamental class. For a proper birational map $f:Y'\to Y$ between such varieties, the pushforward of a fundamental class of $Y'(\mathbb{R})$ equals the fundamental class on $Y(\mathbb{R})$.
\end{prop}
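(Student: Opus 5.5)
The plan is to prove existence and uniqueness of the fundamental class first on the smooth locus, then on all of $Y(\mathbb{R})$ via the Borel--Moore localisation sequence of Example \ref{Ex:Recollement sequences}, and finally deduce compatibility from uniqueness.

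Let $Y^{sm}\subset Y$ be the smooth locus and $Z=Y\setminus Y^{sm}$, which has dimension $<d$. Then $Z(\mathbb{R})$ is a closed semialgebraic subset of dimension $\le d-1$, hence of cohomological dimension $\le d-1$. The open part $U:=Y^{sm}(\mathbb{R})$ is a $d$-dimensional manifold, nonempty by hypothesis.

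\textbf{Step 1 (the open stratum).} Any manifold has a unique mod two fundamental class. This is the unique class in $H^{\mathrm{BM}}_d(U,\mathbb{F}_2)$ which restricts to the generator of $H^{\mathrm{BM}}_d$ of each connected component. Sheaf-theoretically it is the canonical isomorphism $\mathbf{1}_U\cong\omega_U[-d]$, which is unique mod two because $\omega_U[-d]$ is a locally constant rank one $\mathbb{F}_2$-sheaf, hence canonically trivial.

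\textbf{Step 2 (extending across $Z(\mathbb{R})$).} I would take $V^\circ=U$ as the dense open submanifold, or more precisely the part of $Y(\mathbb{R})$ of local dimension $d$. Apply the localisation sequence of Example \ref{Ex:Recollement sequences} to $Y(\mathbb{R})=U\cup Z(\mathbb{R})$:
\[H^{\mathrm{BM}}_d(Z(\mathbb{R}))\to H^{\mathrm{BM}}_d(Y(\mathbb{R}))\to H^{\mathrm{BM}}_d(U)\to H^{\mathrm{BM}}_{d-1}(Z(\mathbb{R})).\]
The first term vanishes because $\dim Z(\mathbb{R})<d$, so restriction to $U$ is injective. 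This gives uniqueness.

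For existence, the obstruction is the image of $[U]$ in $H^{\mathrm{BM}}_{d-1}(Z(\mathbb{R}))$, which need not obviously vanish. I expect this to be the main point. I would handle it using resolution of singularities, as the paper indicates. Choose a resolution $f:X\to Y$ that is an isomorphism over $Y^{sm}$. Then $X(\mathbb{R})$ is a smooth manifold, of dimension $d$ on every component meeting $f^{-1}(U)$. Discard the components that do not meet it, or note that they are still $d$-dimensional, being components of a smooth $d$-dimensional variety. So $X(\mathbb{R})$ has a fundamental class $[X(\mathbb{R})]$. Since $f$ is topologically proper, $f_*[X(\mathbb{R})]\in H^{\mathrm{BM}}_d(Y(\mathbb{R}))$ is defined. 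Its restriction to $U$ is $[f^{-1}(U)]=[U]$, by base change along the open inclusion and the fact that $f$ is an isomorphism there. Hence $[U]$ lifts, and the lift is the fundamental class. Equivalently, at the sheaf level one composes
\[\mathbf{1}_{Y(\mathbb{R})}\to f_*\mathbf{1}_{X(\mathbb{R})}\cong f_!\omega_{X(\mathbb{R})}[-d]\to\omega_{Y(\mathbb{R})}[-d],\]
which restricts to the identity over $U$.

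\textbf{Step 3 (compatibility).} Let $f:Y'\to Y$ be proper birational between such varieties. There is a dense open $V\subset Y$ over which $f$ is an isomorphism, and we may shrink $V$ to lie in $Y^{sm}$ with $f^{-1}(V)\subset Y'^{sm}$. The class $f_*[Y'(\mathbb{R})]$ restricts over $V(\mathbb{R})$ to $[V(\mathbb{R})]$. The uniqueness argument of Step 2 applies equally with $V$ in place of $Y^{sm}$, since the complement has dimension $<d$ and restriction to $V(\mathbb{R})$ is therefore injective on $H^{\mathrm{BM}}_d$. It follows that $f_*[Y'(\mathbb{R})]=[Y(\mathbb{R})]$.

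The one subtlety to check is that $V(\mathbb{R})$ is dense in the $d$-dimensional part of $Y(\mathbb{R})$. This is exactly what the definition of fundamental class requires, and it follows as in the preceding lemma.
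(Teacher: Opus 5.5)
Your proposal is correct and follows essentially the same route as the paper: existence by pushing forward the fundamental class of a resolution, uniqueness from the vanishing of $H^{\mathrm{BM}}_d(Z(\mathbb{R}))$ in the localisation sequence, and compatibility deduced from uniqueness. Your Step 3 only makes explicit a check the paper leaves implicit, namely that $f_*[Y'(\mathbb{R})]$ restricts to a fundamental class over a dense open set where $f$ is an isomorphism.
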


\begin{proof}
We may construct such a class for $Y(\mathbb{R})$ by pushing forward along a resolution of singularities. For such a resolution $f:X\to Y$, a fundamental class \[[X(\mathbb{R})]:\mathbf{1}_{X(\mathbb{R})}\to \omega_{X(\mathbb{R})}[-d]\] of $X(\mathbb{R})$ pushes forward by the canonical adjunction maps and properness of $f$:\[\mathbf{1}_{Y(\mathbb{R})}\to f_*\mathbf{1}_{X(\mathbb{R})}\xrightarrow{f_*[X(\mathbb{R})]}f_* \omega_{X(\mathbb{R})}[-d]\xrightarrow{\sim} f_! \omega_{X(\mathbb{R})}[-d]\to  \omega_{Y(\mathbb{R})}[-d].\]
By base change, this map restricts to an isomorphism over the smooth locus $Y^{sm}(\mathbb{R})$, and supplies the desired orientation of $Y(\mathbb{R})$. Any smooth variety has a unique fundamental class as we are working over $\mathbb{F}_2$, and if $Z$ is the complement of the smooth locus in $Y$, then $Z(\mathbb{R})$ has dimension at most $d-1$. The long exact sequence \[0=H_d^{\mathrm{BM}}(Z(\mathbb{R}))\to H_d^{\mathrm{BM}}(Y(\mathbb{R}))\to H_d^{\mathrm{BM}}(Y^{sm}(\mathbb{R}))\] then shows that this lift of a fundamental class is unique. The uniqueness of this class then implies the desired compatibility under pushforward.
\end{proof}

This existence of fundamental classes implies the following crucial splitting for birational maps between smooth real varieties.

\begin{cor}\label{Cor:Splitting}
For $g:W\to X$ a proper birational morphism of smooth real varieties, the canonical map $\mathbf{1}_{X(\mathbb{R})}\to g_*\mathbf{1}_{W(\mathbb{R})}$ is split injective; there exists $Q\in D^b_c(X(\mathbb{R}),\mathbb{F}_2)$ such that \[g_*\mathbf{1}_{W(\mathbb{R})}\cong \mathbf{1}_{X(\mathbb{R})}\oplus Q.\]
\end{cor}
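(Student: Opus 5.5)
We construct a left inverse to the unit map $\eta:\mathbf{1}_{X(\mathbb{R})}\to g_*\mathbf{1}_{W(\mathbb{R})}$ from the pushforward of fundamental classes (a trace, or Umkehr, map), and then show that the composite with $\eta$ is the identity. Write $d=\dim X=\dim W$. Since $X$ and $W$ are smooth, $X(\mathbb{R})$ and $W(\mathbb{R})$ are $d$-manifolds where nonempty; if $X(\mathbb{R})$ is empty there is nothing to prove. By the preceding proposition, their mod two fundamental classes are isomorphisms
\[[W(\mathbb{R})]:\mathbf{1}_{W(\mathbb{R})}\xrightarrow{\sim}\omega_{W(\mathbb{R})}[-d],\qquad [X(\mathbb{R})]:\mathbf{1}_{X(\mathbb{R})}\xrightarrow{\sim}\omega_{X(\mathbb{R})}[-d].\]
As in Example \ref{ex:Manifolds}, we define
\[\tau: g_*\mathbf{1}_{W(\mathbb{R})}\xrightarrow{g_*[W(\mathbb{R})]} g_*\omega_{W(\mathbb{R})}[-d]\cong g_!\omega_{W(\mathbb{R})}[-d]\xrightarrow{\epsilon}\omega_{X(\mathbb{R})}[-d]\xrightarrow{[X(\mathbb{R})]^{-1}}\mathbf{1}_{X(\mathbb{R})}.\]
Here $g_!\cong g_*$ holds because $g$ is topologically proper, by the preceding lemma. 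It then suffices to show $\tau\circ\eta=\mathrm{id}$; the splitting follows by taking $Q$ to be the cone of $\eta$, or equivalently $\ker(\eta\circ\tau)$ in the idempotent complete category.

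The composite $\tau\circ\eta$ is an endomorphism of $\mathbf{1}_{X(\mathbb{R})}$. Such endomorphisms are identified with $H^0(X(\mathbb{R}),\mathbb{F}_2)$, i.e.\ locally constant functions $X(\mathbb{R})\to\mathbb{F}_2$. Unwinding the definition, $[X(\mathbb{R})]\circ\tau\circ\eta$ is exactly the morphism $\mathbf{1}_{X(\mathbb{R})}\to\omega_{X(\mathbb{R})}[-d]$ obtained by pushing forward $[W(\mathbb{R})]$ along $g$, as in the proof of the preceding proposition. That proposition says this pushforward equals $[X(\mathbb{R})]$. Hence $\tau\circ\eta=\mathrm{id}$.

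If we prefer not to lean on the uniqueness statement, we can argue locally instead. A locally constant $\mathbb{F}_2$-valued function is determined by its values on a dense open set. By the preceding lemma, $g$ is an isomorphism over a dense open $U(\mathbb{R})\subset X(\mathbb{R})$. By base change to $U(\mathbb{R})$, both $\eta$ and $\tau$ restrict to the identity there, since fundamental classes restrict to fundamental classes and over $\mathbb{F}_2$ there is a unique one on each component. Therefore $\tau\circ\eta$ equals $1$ on a dense set, so it is the identity.

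The main point needing care is this identification of $\tau\circ\eta$ with the pushed-forward fundamental class, together with the density of $U(\mathbb{R})$ in every connected component of $X(\mathbb{R})$. Density holds because $X\setminus U$ has dimension $<d$ while every component of $X(\mathbb{R})$ is a $d$-manifold. This is exactly where smoothness of $X$ is used: it guarantees that every component of $X(\mathbb{R})$ has dimension $d$, so that no component is missed by $U(\mathbb{R})$, unlike the isolated-point example in the remark following the lemma.
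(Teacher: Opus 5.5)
Your proposal is correct and matches the paper's proof. The paper also notes that the composite $\mathbf{1}_{X(\mathbb{R})}\to g_*\mathbf{1}_{W(\mathbb{R})}\to g_!\omega_{W(\mathbb{R})}[-d]\to\omega_{X(\mathbb{R})}[-d]$ equals the fundamental class of $X(\mathbb{R})$, by compatibility of fundamental classes under pushforward, and is therefore an isomorphism; its inverse gives the retraction, and the splitting follows from idempotent completeness. Your fallback density argument is also valid: it is just the uniqueness argument of the preceding proposition, carried out directly in $H^0(X(\mathbb{R}),\mathbb{F}_2)$.
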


\begin{proof}
The compatibility of fundamental classes under pushforward implies the composition \[\mathbf{1}_{X(\mathbb{R})}\to g_*\mathbf{1}_{W(\mathbb{R})}\to g_!\omega_{W(\mathbb{R})}[-d]\to \omega_{X(\mathbb{R})}[-d]\] is the fundamental class of $X$, and is thus an isomorphism. The inverse of this isomorphism then yields the desired retract, and the splitting exists by idempotent completeness of the (additive) category of sheaves $D^b_c(X(\mathbb{R}),\mathbb{F}_2)$.
\end{proof}
\section{Construction of the real geometric extension}\label{Sec:Construction}

In this section we construct the real geometric extension $\mathscr{E}(Y_\mathbb{R})$ and its numerical motivic variant, and verify the properties stated in Theorem \ref{Thm:coh of real geo ext} and Theorem \ref{Thm:Real geo extension}.

\begin{thm}\label{C2.Realgeoextensions}
Let $Y$ be a $d$-dimensional real variety, with nonempty smooth locus $Y^{sm}(\mathbb{R})$. Then there exists a sheaf $\mathscr{E}(Y_\mathbb{R})$ in $D^b_c(Y(\mathbb{R}),\mathbb{F}_2)$ characterised by the following two properties:
\begin{itemize}
    \item This sheaf $\mathscr{E}(Y_\mathbb{R})$ extends the constant sheaf on $Y^{sm}(\mathbb{R})$, and has no proper direct summand with this property.
    \item For any resolution of singularities $f:X\to Y$, the sheaf $\mathscr{E}(Y_\mathbb{R})$ is a direct summand of $f_*\mathbf{1}_{X(\mathbb{R})}$.
\end{itemize}

\end{thm}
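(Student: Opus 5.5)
We follow the strategy outlined in the introduction and combine three ingredients: the Krull--Schmidt property of $D^b_c(Y(\mathbb{R}),\mathbb{F}_2)$, the splitting of Corollary \ref{Cor:Splitting}, and the fact that any two resolutions are dominated by a third. We first fix a resolution $f:X\to Y$. We may arrange that it is an isomorphism over $Y^{sm}$, or more generally over a dense open $U\subset Y$ meeting $Y^{sm}(\mathbb{R})$. Then $f_*\mathbf{1}_{X(\mathbb{R})}$ restricted to $U(\mathbb{R})$ is the constant sheaf. Since $Y(\mathbb{R})$ is a semialgebraic set, the category $D^b_c(Y(\mathbb{R}),\mathbb{F}_2)$ has finite-dimensional Hom spaces and is idempotent complete, so it is Krull--Schmidt. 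Hence $f_*\mathbf{1}_{X(\mathbb{R})}=\bigoplus_k E_k$ is a finite sum of indecomposables, unique up to permutation and isomorphism.

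Next we single out the summand extending the constant sheaf. Restricting to $Y^{sm}(\mathbb{R})$ gives $\bigoplus_k E_k|_{Y^{sm}(\mathbb{R})}\cong f_*\mathbf{1}|_{Y^{sm}(\mathbb{R})}$. I would work component by component: for each connected component $C$ of $Y^{sm}(\mathbb{R})$, the sheaf $\mathbf{1}_C$ is indecomposable, since its endomorphism ring is $\mathbb{F}_2$, which is local. However, $f_*\mathbf{1}$ over $Y^{sm}(\mathbb{R})$ need not be constant if $f$ is not an isomorphism there, so I would take $\mathscr{E}$ to be the sum of those $E_k$ whose restriction to the dense open $U(\mathbb{R})$ is nonzero. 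Over $U(\mathbb{R})$ the pushforward is constant, so these summands together restrict to the constant sheaf on $U(\mathbb{R})$ and form the minimal summand with this property. To match the statement's phrase ``extends the constant sheaf on $Y^{sm}(\mathbb{R})$'', I would then check that this sum restricted to all of $Y^{sm}(\mathbb{R})$ is constant. The plan is to apply the same construction to $Y^{sm}$ itself: the identity is a resolution of $Y^{sm}$, and the splitting argument below shows that the dense part is $\mathbf{1}$.

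Independence of the resolution is the main step. Given two resolutions $f_1,f_2$, choose a resolution $X'$ dominating both via proper birational maps $g_i:X'\to X_i$. Corollary \ref{Cor:Splitting} gives $g_{i*}\mathbf{1}_{X'(\mathbb{R})}\cong \mathbf{1}_{X_i(\mathbb{R})}\oplus Q_i$. Here $Q_i$ vanishes on the open set where $g_i$ is an isomorphism, so its support lies in the nowhere dense real points of a proper closed subvariety. Pushing forward yields
\[(f_i g_i)_*\mathbf{1}_{X'(\mathbb{R})}\cong f_{i*}\mathbf{1}_{X_i(\mathbb{R})}\oplus f_{i*}Q_i.\]
The summands of $f_{i*}Q_i$ have nowhere dense support, so they all vanish on a suitable common dense open set. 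By uniqueness of Krull--Schmidt decompositions, the dense-support summands of $f_{1*}\mathbf{1}$ and $f_{2*}\mathbf{1}$ both coincide with those of $(f_ig_i)_*\mathbf{1}$, and are therefore isomorphic.

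The delicate point, which I expect to be the main obstacle, is making ``dense support'' precise and uniform. An indecomposable summand must either restrict to zero on a dense open or have support containing a component of $Y^{sm}(\mathbb{R})$. Because $Y(\mathbb{R})$ may have lower-dimensional pieces not in the closure of $Y^{sm}(\mathbb{R})$, I would define the distinguished summands as those with nonzero restriction to $Y^{sm}(\mathbb{R})\cap U(\mathbb{R})$. This criterion is compatible with intersecting the finitely many dense opens that arise.

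Finally, the characterisation follows. Any summand of every $f_*\mathbf{1}$ that extends the constant sheaf must contain all the dense summands by Krull--Schmidt. Minimality then forces it to equal $\mathscr{E}(Y_\mathbb{R})$, which gives uniqueness up to isomorphism.
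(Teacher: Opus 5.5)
Your argument follows the paper's proof: Krull--Schmidt for $D^b_c(Y(\mathbb{R}),\mathbb{F}_2)$, the splitting of Corollary \ref{Cor:Splitting}, and domination of two resolutions by a third. In both, $\mathscr{E}$ is taken to be the sum of the indecomposable summands that are nonzero over the smooth (or a dense) locus.

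One sub-step needs repair: your check that this sum is constant on all of $Y^{sm}(\mathbb{R})$ when $f$ is not an isomorphism over $Y^{sm}$. Running the construction for $f|_{f^{-1}(Y^{sm})}$ only identifies the dense part of $(f_*\mathbf{1}_{X(\mathbb{R})})|_{Y^{sm}(\mathbb{R})}$. Equating that with the restriction of the dense part of $f_*\mathbf{1}_{X(\mathbb{R})}$ assumes that extracting the dense summand commutes with restriction to an open set. That assumption can fail, since an indecomposable object may split after restriction; it is exactly the Zariski-locality that the paper leaves open.

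The fix is already in your first paragraph. Build $\mathscr{E}$ from a resolution that is an isomorphism over $Y^{sm}$; there its restriction is literally $\mathbf{1}_{Y^{sm}(\mathbb{R})}$. Your independence argument then shows that the dense part of $f_*\mathbf{1}_{X(\mathbb{R})}$ for any other resolution $f$ is isomorphic to $\mathscr{E}$, so it is also constant on $Y^{sm}(\mathbb{R})$. The paper implicitly takes resolutions to be isomorphisms over $Y^{sm}$ throughout, so it never meets this issue.
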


\begin{proof}
The constructible derived category of sheaves on $Y(\mathbb{R})$ has finite dimensional morphism spaces and is idempotent complete and additive, so is Krull-Schmidt (Lemma 5.2 of \cite{KRAUSE2015535}).
For any choice of resolution $f:X\to Y$, the Krull-Schmidt property implies we may find a direct sum decomposition \[f_*\mathbf{1}_{X(\mathbb{R})}\cong \mathscr{E}_f\oplus Q\] where $\mathscr{E}_f$ extends the constant sheaf over $Y^{sm}(\mathbb{R})$, and $\mathscr{E}_f$ has no proper direct summand with this property. The isomorphism class of $\mathscr{E}_f$ is defined independently of the chosen splitting; for a given Krull-Schmidt decomposition \[f_*\mathbf{1}_{X(\mathbb{R})}\cong \bigoplus^n_{i=1} F_i,\] $\mathscr{E}_f$ is isomorphic to the summand of those $F_i$ with $F_i|_{Y^{sm}(\mathbb{R})}\neq 0$. It remains to show that the isomorphism class of $\mathscr{E}_f$ is independent of the resolution $f$ chosen. Let $f_i:X_i\to Y$ for $i=1,2$ be two resolutions of singularities of $Y$, dominated by a third $f_3:X_3\to Y$. By Corollary \ref{Cor:Splitting}, we have that $(f_1)_*\mathbf{1}_{X_1(\mathbb{R})}$ and $(f_2)_*\mathbf{1}_{X_2(\mathbb{R})}$ are direct summands of  $(f_3)_*\mathbf{1}_{X_3(\mathbb{R})}$, so $\mathscr{E}_{f_1}$, viewed as a summand of $(f_3)_*\mathbf{1}_{X_3(\mathbb{R})}$, is isomorphic to $\mathscr{E}_{f_3}$. By symmetry, we conclude $\mathscr{E}_{f_1}\cong \mathscr{E}_{f_2}$, showing independence of the resolution.
\end{proof}

We call this sheaf $\mathscr{E}(Y_\mathbb{R})$ the real geometric extension on $Y$.
We now turn to verification of the properties of Theorem \ref{Thm:coh of real geo ext}, and we will assume for the rest of this section that $Y$ is a $d$-dimensional real variety with a smooth real point.

\begin{prop}
The cohomology of the sheaf $\mathscr{E}(Y_\mathbb{R})$ vanishes outside the interval $[0,d]$:
\[\mathscr{E}^i(Y(\mathbb{R}),\mathbb{F}_2):=\mathbb{H}^i\mathscr{E}(Y_\mathbb{R})=0 \text{ if } i>d \text{ or } i<0.\]
In addition, the stalks and costalks of $\mathscr{E}(Y_\mathbb{R})$ are finite-dimensional.
\end{prop}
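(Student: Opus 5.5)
The plan is to reduce everything to the corresponding statements for a resolution, using that $\mathscr{E}(Y_\mathbb{R})$ is a direct summand of $f_*\mathbf{1}_{X(\mathbb{R})}$ by Theorem \ref{C2.Realgeoextensions}. First fix a resolution of singularities $f:X\to Y$. Here $X$ is smooth of dimension $d$ with a smooth real point, so $X(\mathbb{R})$ is a $d$-dimensional manifold. We may take $f$ proper, so $f(\mathbb{R})$ is topologically proper, and if $Y$ is complete then $X(\mathbb{R})$ is compact. Since hypercohomology, stalks and costalks are additive functors, every statement about $\mathscr{E}(Y_\mathbb{R})$ follows from the same statement for $f_*\mathbf{1}_{X(\mathbb{R})}$. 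The reason is that a direct summand of a complex whose cohomology vanishes in a given degree (or is finite dimensional) inherits this property.

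For the cohomology, note that $t_*f_*\mathbf{1}_{X(\mathbb{R})}\cong t_*\mathbf{1}_{X(\mathbb{R})}$, so
\[\mathbb{H}^i(f_*\mathbf{1}_{X(\mathbb{R})})\cong H^i(X(\mathbb{R}),\mathbb{F}_2).\]
This group vanishes for $i<0$ trivially, and for $i>d$ because $X(\mathbb{R})$ is a $d$-manifold, hence of cohomological dimension $d$. Finite dimensionality follows from $X(\mathbb{R})$ being a compact manifold in the complete case. In general I would instead use that $X(\mathbb{R})$ is a semialgebraic set, which admits a finite triangulation after semialgebraic compactification (\cite{bochnak1998real}), so its cohomology is finite.

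For the stalks, apply proper base change as in the discussion following diagram \ref{diagram:pullback}:
\[i_y^*f_*\mathbf{1}_{X(\mathbb{R})}\cong H^*(F_y(\mathbb{R}),\mathbb{F}_2).\]
Here $F_y(\mathbb{R})$ is a compact semialgebraic set, so it is finitely triangulable and has finite dimensional total cohomology. For the costalks, use that $\mathbb{D}$ is an involution exchanging $i_y^*$ and $i_y^!$, together with $\mathbb{D}f_*\cong f_*\mathbb{D}$ (as $f$ is proper) and $\mathbb{D}\mathbf{1}_{X(\mathbb{R})}\cong\mathbf{1}_{X(\mathbb{R})}[d]$ (as $X(\mathbb{R})$ is a manifold). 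Together these give
\[i_y^!f_*\mathbf{1}_{X(\mathbb{R})}\cong \mathbb{D}\,i_y^*f_*\mathbf{1}_{X(\mathbb{R})}[d],\]
which is the shifted linear dual of the stalk and hence finite dimensional. Alternatively, constructibility alone gives finiteness of stalks and costalks, since $f_*$ preserves constructibility.

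The only point I expect to need care is the non-complete case, namely finiteness of $H^*(X(\mathbb{R}))$ and the precise use of finiteness results for semialgebraic sets. If needed, I will instead read "finite dimensional" off the Krull–Schmidt setup: the constructible derived category on $Y(\mathbb{R})$ has finite-dimensional Hom spaces, and $\mathbb{H}^i(\mathscr{E})=\hom(\mathbf{1}_{Y(\mathbb{R})},\mathscr{E}[i])$.
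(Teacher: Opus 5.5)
Your proposal is correct and matches the paper's proof. The paper gets the vanishing outside $[0,d]$ exactly as you do: $\mathscr{E}(Y_\mathbb{R})$ is a direct summand of $f_*\mathbf{1}_{X(\mathbb{R})}$, whose hypercohomology is $H^*(X(\mathbb{R}),\mathbb{F}_2)$ for the $d$-manifold $X(\mathbb{R})$. It gets finiteness of stalks and costalks directly from constructibility, which is your fallback remark. Your base change and duality computation is a valid, more concrete substitute; with the shift applied before dualising, the costalk is $(\mathbb{D}\,i_y^*f_*\mathbf{1}_{X(\mathbb{R})})[-d]$. Finiteness of $\mathbb{H}^*$ in the non-complete case, which you worry about, is not part of this statement.
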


\begin{proof}
The finiteness statements hold by constructibility, and the vanishing holds as $\mathbb{H}^i\mathscr{E}(Y_\mathbb{R})$ is a direct summand of $\mathbb{H}^if_*\mathbf{1}_{X(\mathbb{R})}\cong H^i(X(\mathbb{R}),\mathbb{F}_2)$.
\end{proof}

As a corollary of base change, we obtain that the size of the stalks of $\mathscr{E}(Y_\mathbb{R})$ are a lower bound for the fibre Betti numbers in any resolution.

\begin{cor}
For any resolution of singularities $f:X\to Y$, the cohomology of the stalk of $\mathscr{E}(Y_\mathbb{R})$ at $y$ provides a lower bound for the Betti numbers of the fibre $F_y=f^{-1}(y)$:
    \[\dim H^i(i_y^* \mathscr{E}(Y_\mathbb{R}))\leq \dim H^i(F_y(\mathbb{R}),\mathbb{F}_2)\]
\end{cor}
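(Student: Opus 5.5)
The plan is to combine the direct-summand property of Theorem \ref{C2.Realgeoextensions} with proper base change along the inclusion of a point. Fix a resolution $f:X\to Y$ and $y\in Y(\mathbb{R})$. By Theorem \ref{C2.Realgeoextensions} we may choose a decomposition
\[f_*\mathbf{1}_{X(\mathbb{R})}\cong \mathscr{E}(Y_\mathbb{R})\oplus Q\]
in $D^b_c(Y(\mathbb{R}),\mathbb{F}_2)$. We then apply the additive, triangulated functor $i_y^*$, where $i_y:\{y\}\to Y(\mathbb{R})$ is the inclusion. Since $i_y^*$ preserves finite direct sums, this gives
\[i_y^*f_*\mathbf{1}_{X(\mathbb{R})}\cong i_y^*\mathscr{E}(Y_\mathbb{R})\oplus i_y^*Q\]
in $S_\ast$. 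Taking $H^i$, which is additive, we see that $H^i(i_y^*\mathscr{E}(Y_\mathbb{R}))$ is a direct summand of $H^i(i_y^*f_*\mathbf{1}_{X(\mathbb{R})})$. It therefore injects into it.

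Next we identify the right-hand side with the cohomology of the real fibre. The map $f$ is proper as a morphism of varieties, so the induced map $X(\mathbb{R})\to Y(\mathbb{R})$ is topologically proper, as in the lemma on proper birational maps. Hence $f_!\cong f_*$. We apply base change to the pullback square with $g=i_y$. Its fibre product is $X(\mathbb{R})\times_{Y(\mathbb{R})}\{y\}=F_y(\mathbb{R})$, because taking real points commutes with fibre products. Exactly as in the discussion after diagram \eqref{diagram:pullback}, this gives
\[i_y^*f_*\mathbf{1}_{X(\mathbb{R})}\cong i_y^*f_!\mathbf{1}_{X(\mathbb{R})}\cong (f|_{F_y})_!\mathbf{1}_{F_y(\mathbb{R})}\cong (f|_{F_y})_*\mathbf{1}_{F_y(\mathbb{R})}.\]
By Example \ref{Ex:terminal map}, the degree $i$ cohomology of the last object is $H^i(F_y(\mathbb{R}),\mathbb{F}_2)$.

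Combining the two steps yields an injection $H^i(i_y^*\mathscr{E}(Y_\mathbb{R}))\hookrightarrow H^i(F_y(\mathbb{R}),\mathbb{F}_2)$, and hence the stated dimension inequality. Both sides are finite dimensional by constructibility, or by compactness of $F_y(\mathbb{R})$ as a semialgebraic set. The same argument with $t_*$ in place of $i_y^*$ recovers the global injection $\mathscr{E}^i(Y(\mathbb{R}),\mathbb{F}_2)\hookrightarrow H^i(X(\mathbb{R}),\mathbb{F}_2)$ of Corollary \ref{Cor:fibre minimality}.

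The only point requiring care is the base change step. There we need that real points of the scheme-theoretic fibre agree with the topological fibre, which is immediate. We also need that proper base change applies in this constructible setting; that holds because the real map is proper and stratifiable.
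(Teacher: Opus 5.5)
Your proof is correct and is essentially the paper's argument: $\mathscr{E}(Y_\mathbb{R})$ is a direct summand of $f_*\mathbf{1}_{X(\mathbb{R})}$, and proper base change identifies $i_y^*f_*\mathbf{1}_{X(\mathbb{R})}$ with the cohomology of $F_y(\mathbb{R})$. You additionally spell out the additivity of $i_y^*$ and $H^i$, and the properness of the real map, which the paper leaves implicit.
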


\begin{proof}
As the map $f$ is proper, base change implies this stalk of $\mathscr{E}(Y_\mathbb{R})$ at $y$ is a summand of $(f|_{F_y(\mathbb{R})})_*\mathbf{1}_{F_y(\mathbb{R})}$, which computes the cohomology of $F_y(\mathbb{R})$.
\end{proof}

\begin{prop}\label{propertiesrealgeoext}
The real geometric extension $\mathscr{E}(Y_\mathbb{R})$ on $Y(\mathbb{R})$ is a $d$-shifted self-dual complex of sheaves; there exists an isomorphism: \[\mathbb{D}\mathscr{E}(Y_\mathbb{R})\cong \mathscr{E}(Y_\mathbb{R})[d].\]
Its cohomology therefore satisfies Poincaré duality:    
\[\mathbb{H}^i(\mathscr{E}(Y_\mathbb{R}))\cong \mathbb{H}^{d-i}_c(\mathscr{E}(Y_\mathbb{R}))^\vee.\]
In particular, if $Y(\mathbb{R})$ is compact, we have the duality \[\mathscr{E}^i(Y(\mathbb{R}),\mathbb{F}_2):=\mathbb{H}^i(\mathscr{E}(Y_\mathbb{R}))\cong \mathbb{H}^{d-i}(\mathscr{E}(Y_\mathbb{R}))^\vee=:\mathscr{E}^{d-i}(Y(\mathbb{R}),\mathbb{F}_2)^\vee\]
\end{prop}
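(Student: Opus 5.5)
Fix a resolution of singularities $f:X\to Y$ and write $d$ for the dimension. Since $X(\mathbb{R})$ is a $d$-manifold, its mod two fundamental class gives $\mathbf{1}_{X(\mathbb{R})}\cong\omega_{X(\mathbb{R})}[-d]$. Since $f$ is proper, $f_*\cong f_!$, and Verdier duality exchanges these. Hence
\[\mathbb{D}(f_*\mathbf{1}_{X(\mathbb{R})})\cong f_!\mathbb{D}\mathbf{1}_{X(\mathbb{R})}\cong f_*\omega_{X(\mathbb{R})}\cong f_*\mathbf{1}_{X(\mathbb{R})}[d].\]
So $F:=f_*\mathbf{1}_{X(\mathbb{R})}$ is $d$-shifted self-dual.

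Choose a Krull-Schmidt decomposition $F\cong\mathscr{E}(Y_\mathbb{R})\oplus Q$, as in the proof of Theorem \ref{C2.Realgeoextensions}. Here $\mathscr{E}(Y_\mathbb{R})$ collects the indecomposable summands whose restriction to $Y^{sm}(\mathbb{R})$ is nonzero, and $Q$ collects the summands supported on the complement. The functor $\mathbb{D}(-)[-d]$ is an additive anti-autoequivalence of $D^b_c(Y(\mathbb{R}),\mathbb{F}_2)$. It therefore preserves indecomposability, and it sends $F$ to a sheaf isomorphic to $F$.

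Verdier duality commutes with restriction to the open set $U=Y^{sm}(\mathbb{R})$, because $j^*=j^!$. It follows that an indecomposable summand $F_i$ satisfies $F_i|_U\neq0$ if and only if $\mathbb{D}F_i[-d]|_U\neq0$. So $\mathbb{D}(-)[-d]$ permutes the indecomposable summands of $F$ while preserving the property of having dense support. By uniqueness of Krull-Schmidt decompositions, the dense part of $\mathbb{D}F[-d]\cong F$ is isomorphic to $\mathscr{E}(Y_\mathbb{R})$. But that dense part is also $\mathbb{D}\mathscr{E}(Y_\mathbb{R})[-d]$. This gives $\mathbb{D}\mathscr{E}(Y_\mathbb{R})\cong\mathscr{E}(Y_\mathbb{R})[d]$.

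The step that needs the most care is this matching of summands. One has to check two things: that duality preserves the "nonzero on the smooth locus" criterion, which is the restriction argument above, and that the isomorphism classes are compared inside the same Krull-Schmidt category, so that uniqueness of decomposition applies. Alternatively, one can avoid choosing $f$ altogether. Let $\mathscr{E}':=\mathbb{D}\mathscr{E}(Y_\mathbb{R})[-d]$. Then $\mathscr{E}'$ is a summand of $\mathbb{D}F[-d]\cong F$ for every resolution $f$. It restricts on $U$ to $\mathbb{D}\mathbf{1}_U[-d]\cong\mathbf{1}_U$. It has no proper summand with this property, since such a summand would dualise back to a proper summand of $\mathscr{E}(Y_\mathbb{R})$ with the same property. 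By the characterisation in Theorem \ref{C2.Realgeoextensions}, $\mathscr{E}'\cong\mathscr{E}(Y_\mathbb{R})$.

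For the cohomological statements, apply the terminal map $t$. Using $t_*\mathbb{D}\cong\mathbb{D}t_!$ and the isomorphism just proved,
\[t_*\mathscr{E}(Y_\mathbb{R})\cong t_*\mathbb{D}\mathscr{E}(Y_\mathbb{R})[-d]\cong\mathbb{D}_{\ast}\big(t_!\mathscr{E}(Y_\mathbb{R})\big)[-d].\]
Over a point, $\mathbb{D}_\ast$ is linear duality of graded vector spaces. Taking $H^i$ gives
\[\mathbb{H}^i(\mathscr{E}(Y_\mathbb{R}))\cong\mathbb{H}^{d-i}_c(\mathscr{E}(Y_\mathbb{R}))^\vee.\]
When $Y(\mathbb{R})$ is compact, $t_!=t_*$, and this yields the stated duality between $\mathscr{E}^i$ and $(\mathscr{E}^{d-i})^\vee$.
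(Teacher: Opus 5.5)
Your proof is correct and follows the paper's argument. You get self-duality of $f_*\mathbf{1}_{X(\mathbb{R})}$ from the fundamental class and properness, then use that $\mathbb{D}(-)[-d]$ commutes with restriction to $Y^{sm}(\mathbb{R})$, so it must match the dense part of the Krull--Schmidt decomposition with itself, and finally push forward to a point. You spell out the summand-matching step, and the alternative via the characterisation in Theorem~\ref{C2.Realgeoextensions}, more carefully than the paper, which leaves it implicit.
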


\begin{proof}
Let $f:X\to Y$ be any resolution of singularities of $Y$, and let  $\mathbf{1}_{X(\mathbb{R})}[d]\cong \omega_{X(\mathbb{R})}$ be the (shifted) fundamental class of $X(\mathbb{R})$. As $f$ is proper, this induces the isomorphism:
\[\mathbb{D}_{Y(\mathbb{R})}f_*\mathbf{1}_{X(\mathbb{R})}\cong \mathbb{D}_{Y(\mathbb{R})} f_!\mathbf{1}_{X(\mathbb{R})}\cong f_*\mathbb{D}_{X(\mathbb{R})}\mathbf{1}_{X(\mathbb{R})}\cong f_*\omega_{X(\mathbb{R})}\cong f_*\mathbf{1}_{X(\mathbb{R})}[d].\]
As taking duals and shifts commutes with restriction to an open subset, the isomorphism class of a minimal summand $\mathscr{E}(Y_\mathbb{R})$ of $f_*\mathbf{1}_{X(\mathbb{R})}$ extending $\mathbf{1}_{Y^{sm}(\mathbb{R})}$ over the smooth locus is $d$-shifted self-dual. This self-duality of $\mathscr{E}(Y_\mathbb{R})$ then implies the Poincaré duality statement via pushforward along the terminal map to a point.
\end{proof}

The duality of the above proposition is nonconstructive, and we do not know of a canonical description of it. Such a duality is not uniquely determined by a fundamental class over the smooth locus, and this fact reflects the non-perversity of $\mathscr{E}(Y_\mathbb{R})$. If $Z$ is the singular locus of $Y$, we may understand the ambiguity of this duality isomorphism using the following exact sequence, obtained from the recollement sequence of Example \ref{Ex:Recollement sequences} by taking maps from $\mathscr{E}(Y_\mathbb{R})$:
\[\hom(i^*\mathscr{E}(Y_\mathbb{R}),i^!\mathscr{E}(Y_\mathbb{R}))\to \hom(\mathscr{E}(Y_\mathbb{R}),\mathscr{E}(Y_\mathbb{R}))\to \hom(\mathbf{1}_{Y^{sm}(\mathbb{R})},\mathbf{1}_{Y^{sm}(\mathbb{R})})\]
In the case of an isolated singularity, the vanishing of this error term \[\hom(i^*\mathscr{E}(Y_\mathbb{R}),i^!\mathscr{E}(Y_\mathbb{R}))\] is equivalent to the cohomological supports of $i^*\mathscr{E}(Y_\mathbb{R})$ and $i^!\mathscr{E}(Y_\mathbb{R})$ being disjoint, which in the complex case is implied by the perversity condition. The induced comparison map from cohomology is independent of all choices however, and the factorisation of Theorem \ref{Thm:coh of real geo ext} holds for any choice of duality of $\mathscr{E}(Y_\mathbb{R})$.

\begin{prop}\label{Prop:factorise cap product}
For a choice of resolution $f:X\to Y$, and choice of splitting $f_*\mathbf{1}_{X(\mathbb{R})}\xrightarrow{\pi}\mathscr{E}(Y_\mathbb{R})$, the composition $\mathbf{1}_{Y(\mathbb{R})}\to f_*\mathbf{1}_{X(\mathbb{R})}\xrightarrow{\pi} \mathscr{E}(Y_\mathbb{R})$ is independent of $\pi$, yielding canonical comparison maps: \[\gamma_i:H^i(Y(\mathbb{R}),\mathbb{F}_2)\to \mathscr{E}^i(Y(\mathbb{R}),\mathbb{F}_2).\]
For complete $Y$, so $Y(\mathbb{R})$ is compact, for any choice of duality $\mathscr{E}(Y_\mathbb{R})\xrightarrow{\sim}\mathbb{D}\mathscr{E}(Y_\mathbb{R})[-d]$, the composition \[\mathbf{1}_{Y(\mathbb{R})}\to \mathscr{E}(Y_\mathbb{R})\xrightarrow{\sim}\mathbb{D}\mathscr{E}(Y_\mathbb{R})[-d]\to \omega_{Y(\mathbb{R})}[-d]\] induces a commutative diagram:
\[\begin{tikzcd}
H^i(Y(\mathbb{R}),\mathbb{F}_2)\arrow[r,"\cap \text{[}Y(\mathbb{R})\text{]} "]\arrow[d,"\gamma_i"]&H_{d-i}(Y(\mathbb{R}),\mathbb{F}_2)\\
\mathscr{E}^i(Y(\mathbb{R}),\mathbb{F}_2)\arrow[r,"\sim"]&\mathscr{E}^{d-i}(Y(\mathbb{R}),\mathbb{F}_2)^\vee\arrow[u,"\gamma_{d-i}^\vee"']
\end{tikzcd}\]

\end{prop}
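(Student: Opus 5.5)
Two things have to be shown. First, the composite $\mathbf{1}_{Y(\mathbb{R})}\to f_*\mathbf{1}_{X(\mathbb{R})}\xrightarrow{\pi}\mathscr{E}(Y_\mathbb{R})$ does not depend on $\pi$. Second, the square commutes for any choice of duality.

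For independence, I would compute $\hom(\mathbf{1}_{Y(\mathbb{R})},\mathscr{E}(Y_\mathbb{R}))$ by adjunction. Write $\eta:\mathbf{1}_{Y(\mathbb{R})}\to f_*\mathbf{1}_{X(\mathbb{R})}$ for the unit. Any morphism $\phi:f_*\mathbf{1}_{X(\mathbb{R})}\to \mathscr{E}(Y_\mathbb{R})$ satisfies $\phi\circ\eta=\mathrm{adj}(f^*\phi)$, where $f^*\phi$ is regarded as a map $\mathbf{1}_{X(\mathbb{R})}\to f^*\mathscr{E}(Y_\mathbb{R})$. This second map is a degree zero section of $f^*\mathscr{E}(Y_\mathbb{R})$ on $X(\mathbb{R})$.

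The key observation is that $f^*\mathscr{E}(Y_\mathbb{R})$ is a summand of $f^*f_*\mathbf{1}_{X(\mathbb{R})}$, and that $\mathscr{E}(Y_\mathbb{R})$ lives in degrees $\ge 0$, since stalks of $f_*\mathbf{1}$ are fibre cohomology. Hence $\hom(\mathbf{1}_X,f^*\mathscr{E})=H^0(X(\mathbb{R}),\mathcal{H}^0(f^*\mathscr{E}))$ is a space of sections of a sheaf. The choice of $\pi$ is a choice of idempotent cutting out $\mathscr{E}$. Two projections $\pi,\pi'$ onto the same isomorphism class differ, after identifying targets, by an automorphism, and the statement should be read as independence up to the identification of $\mathscr{E}(Y_\mathbb{R})$ that makes both restrict to the identity over $Y^{sm}(\mathbb{R})$.

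So I would reduce to the following claim: a map $\mathbf{1}_{Y(\mathbb{R})}\to\mathscr{E}(Y_\mathbb{R})$ is determined by its restriction to the dense open $Y^{sm}(\mathbb{R})$. This is the main obstacle. By the recollement triangle it amounts to the vanishing $\hom(\mathbf{1}_{Z(\mathbb{R})},i^!\mathscr{E}(Y_\mathbb{R}))=H^0(Z(\mathbb{R}),i^!\mathscr{E})$, where $Z$ is the singular locus. I would prove this vanishing in degree zero by showing that $\mathcal{H}^0(\mathscr{E})$ has no sections supported on $Z(\mathbb{R})$. Since $\mathscr{E}$ is a summand of $f_*\mathbf{1}$, it suffices to check this for $\mathcal{H}^0(f_*\mathbf{1}_{X(\mathbb{R})})=f_*\mathbb{F}_2$. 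A locally constant function on $X(\mathbb{R})$ vanishing on the dense set $f^{-1}(Y^{sm})(\mathbb{R})$ is zero, using the density from the lemma on proper birational maps.

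Combined with the degree bound, $\hom(i_*\mathbf{1}_Z,\mathscr{E})=0$. Hence the composite is determined by its restriction to the smooth locus, where it is the identity $\mathbf{1}\to\mathbf{1}$. This gives independence and defines $\gamma_i$ on hypercohomology.

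For the diagram, dualize. The composite $\mathbf{1}\to\mathscr{E}\xrightarrow{\sim}\mathbb{D}\mathscr{E}[-d]\to\omega[-d]$, where the last map is the Verdier dual of $\gamma$, is a class in $H^{\mathrm{BM}}_d(Y(\mathbb{R}))$. Over $Y^{sm}(\mathbb{R})$ it restricts to an isomorphism $\mathbf{1}\to\omega[-d]$, since each factor restricts to the constant sheaf and a duality there. By the uniqueness of fundamental classes (the earlier proposition, via $H^{\mathrm{BM}}_d(Z(\mathbb{R}))=0$), this class is $[Y(\mathbb{R})]$, independent of the chosen duality. Applying $t_*=t_!$ to this factorization, which uses compactness of $Y(\mathbb{R})$, and identifying $\mathbb{H}^{i}(\mathbb{D}\mathscr{E}[-d])$ with $\mathscr{E}^{d-i}(Y(\mathbb{R}),\mathbb{F}_2)^\vee$ yields the commutative square with $\gamma_{d-i}^\vee$ on the right.
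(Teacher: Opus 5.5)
Your proof is correct and follows the paper's argument. A map $\mathbf{1}_{Y(\mathbb{R})}\to\mathscr{E}(Y_\mathbb{R})$ is determined by its restriction to the smooth locus because $\mathscr{E}(Y_\mathbb{R})$ is a summand of $f_*\mathbf{1}_{X(\mathbb{R})}$ and locally constant functions on $X(\mathbb{R})$ are determined on a dense open; the paper sees this directly through $\hom(\mathbf{1}_{Y(\mathbb{R})},f_*\mathbf{1}_{X(\mathbb{R})})\cong H^0(X(\mathbb{R}),\mathbb{F}_2)$, you through recollement and the vanishing of $\hom(\mathbf{1}_{Z(\mathbb{R})},i^!\mathscr{E}(Y_\mathbb{R}))$, and in both the dualised composite is then the unique fundamental class. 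Your hedge about reading the independence only up to an identification of $\mathscr{E}(Y_\mathbb{R})$ is unnecessary: with $\mathbb{F}_2$ coefficients the only automorphism of $\mathbf{1}_{Y^{sm}(\mathbb{R})}$ is the identity, so there is exactly one isomorphism $\mathbf{1}_{Y^{sm}(\mathbb{R})}\to\mathscr{E}(Y_\mathbb{R})|_{Y^{sm}(\mathbb{R})}$ and the statement holds literally.
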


\begin{proof}
The first map may be given intrinsically as the unique morphism $\mathbf{1}_{Y(\mathbb{R})}\to \mathscr{E}(Y_\mathbb{R})$ inducing an isomorphism over the smooth locus of $Y$. To see that a map with this property is unique, note first that any morphism $\mathbf{1}_{Y(\mathbb{R})}\to f_*\mathbf{1}_{X(\mathbb{R})}$ which is an isomorphism over the smooth locus of $Y$ corresponds to an endomorphism of $\mathbf{1}_{X(\mathbb{R})}$ by adjunction, which is an isomorphism over a dense open in $X(\mathbb{R})$, and is thus unique. The split inclusion \[\hom(\mathbf{1}_{Y(\mathbb{R})},\mathscr{E}(Y_\mathbb{R}))\to \hom(\mathbf{1}_{Y(\mathbb{R})},f_*\mathbf{1}_{X(\mathbb{R})})\] then implies the desired uniqueness of $\mathbf{1}_{Y(\mathbb{R})}\to \mathscr{E}(Y_\mathbb{R})$. Similarly, for any choice of duality, the composition \[\mathbf{1}_{Y(\mathbb{R})}\to \mathscr{E}(Y_\mathbb{R})\xrightarrow{\sim}\mathbb{D}\mathscr{E}(Y_\mathbb{R})[-d]\to \omega_{Y(\mathbb{R})}[-d]\] is the fundamental class of $Y(\mathbb{R})$, so the diagram commutes.
\end{proof}

\begin{prop}
The constant sheaf on $Y(\mathbb{R})$ is the real geometric extension if and only if $Y(\mathbb{R})$ is $\mathbb{F}_2$-smooth, the following are equivalent:
\begin{enumerate}
    
    \item The space $Y(\mathbb{R})$ is $\mathbb{F}_2$-smooth; the fundamental class is an isomorphism: \[\mathbf{1}_{Y(\mathbb{R})}\xrightarrow{\sim} \omega_{Y(\mathbb{R})}[-d].\]
    \item The canonical map from the constant sheaf to the real geometric extension on $Y$ is an isomorphism: \[\mathbf{1}_{Y(\mathbb{R})}\xrightarrow{\sim} \mathscr{E}(Y_\mathbb{R}).\]

\end{enumerate}
In this case, the comparison maps from cohomology are all isomorphisms:\[\gamma_i:H^i(Y(\mathbb{R}),\mathbb{F}_2)\xrightarrow{\sim} \mathscr{E}^i(Y(\mathbb{R}),\mathbb{F}_2)\]
\end{prop}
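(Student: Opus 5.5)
We prove the two implications separately, using the canonical map $c:\mathbf{1}_{Y(\mathbb{R})}\to \mathscr{E}(Y_\mathbb{R})$ of Proposition \ref{Prop:factorise cap product}. That proposition also records that for any duality $\mathscr{E}(Y_\mathbb{R})\xrightarrow{\sim}\mathbb{D}\mathscr{E}(Y_\mathbb{R})[-d]$, the composite
\[\mathbf{1}_{Y(\mathbb{R})}\xrightarrow{c}\mathscr{E}(Y_\mathbb{R})\xrightarrow{\sim}\mathbb{D}\mathscr{E}(Y_\mathbb{R})[-d]\xrightarrow{\mathbb{D}c[-d]}\omega_{Y(\mathbb{R})}[-d]\]
is the fundamental class. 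Its proof uses only the uniqueness of fundamental classes, so no compactness is needed.

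For $(2)\Rightarrow(1)$, suppose $c$ is an isomorphism. Then $\mathbb{D}c[-d]$ is an isomorphism as well, since $\mathbb{D}$ is an anti-equivalence. The middle arrow is an isomorphism by Proposition \ref{propertiesrealgeoext}. Hence the fundamental class is an isomorphism, and $Y(\mathbb{R})$ is $\mathbb{F}_2$-smooth.

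For $(1)\Rightarrow(2)$, fix a resolution $f:X\to Y$ and the adjunction unit $\eta:\mathbf{1}_{Y(\mathbb{R})}\to f_*\mathbf{1}_{X(\mathbb{R})}$. As in the proof of the Proposition on fundamental classes, composing $\eta$ with
\[f_*\mathbf{1}_{X(\mathbb{R})}\cong f_!\omega_{X(\mathbb{R})}[-d]\to\omega_{Y(\mathbb{R})}[-d]\]
gives the fundamental class of $Y(\mathbb{R})$. By hypothesis (1) this fundamental class is an isomorphism, so its inverse provides a retraction of $\eta$. This is the same argument as in Corollary \ref{Cor:Splitting}, with smoothness of the target replaced by $\mathbb{F}_2$-smoothness. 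Thus $\mathbf{1}_{Y(\mathbb{R})}$ is a direct summand of $f_*\mathbf{1}_{X(\mathbb{R})}$, and it extends the constant sheaf on $Y^{sm}(\mathbb{R})$.

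The main step is showing that $\mathbf{1}_{Y(\mathbb{R})}$ has no proper direct summand extending $\mathbf{1}_{Y^{sm}(\mathbb{R})}$. Given such a decomposition $\mathbf{1}_{Y(\mathbb{R})}\cong A\oplus B$ with $B|_{Y^{sm}(\mathbb{R})}=0$, I would show $B=0$ by examining its cohomology sheaves:
\begin{itemize}
\item The cohomology sheaves of $\mathbf{1}_{Y(\mathbb{R})}$ are $\mathbb{F}_2$ in degree $0$ and zero otherwise. Hence $B$ is concentrated in degree $0$, and $B$ is a subsheaf of the constant sheaf supported on the nowhere dense closed set $Z(\mathbb{R})$, where $Z$ is the singular locus.
\item A section of the constant sheaf over a connected open set is determined by its value at any point, so it vanishes on a nonempty open subset only if it vanishes identically. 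A nonzero section of $B$ over a small connected open $V$ meeting $Z(\mathbb{R})$ would restrict to zero on $V\setminus Z(\mathbb{R})$. This set is nonempty because $Y(\mathbb{R})$ is locally $d$-dimensional by (1) and $\dim Z(\mathbb{R})<d$.
\item Therefore every section of $B$ is zero, so $B=0$.
\end{itemize}
It follows that $\mathbf{1}_{Y(\mathbb{R})}$ satisfies the characterisation of Theorem \ref{C2.Realgeoextensions}, so $\mathbf{1}_{Y(\mathbb{R})}\cong\mathscr{E}(Y_\mathbb{R})$, via a splitting $\pi$ of $f_*\mathbf{1}_{X(\mathbb{R})}$ onto this summand. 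With this choice of $\pi$, the composite $\pi\circ\eta$ is the identity. Since $c$ is independent of $\pi$ by Proposition \ref{Prop:factorise cap product}, $c$ is an isomorphism.

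Finally, the maps $\gamma_i$ are $\mathbb{H}^i$ applied to $c$. Once $c$ is an isomorphism, each $\gamma_i$ is an isomorphism.
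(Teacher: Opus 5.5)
Your proposal is correct and follows the paper's argument. For $(2)\Rightarrow(1)$ you use the autoduality of $\mathscr{E}(Y_\mathbb{R})$ together with the fact that the composite through any duality is the fundamental class. For $(1)\Rightarrow(2)$ you use the inverse of the fundamental class as a retraction of $\mathbf{1}_{Y(\mathbb{R})}\to f_*\mathbf{1}_{X(\mathbb{R})}$, then a dimension argument to rule out summands supported on the singular locus; the paper phrases this step via Proposition \ref{Prop:smooth real point} on connected components, you via density of $Y^{sm}(\mathbb{R})$ and local sections. You also make explicit two points the paper leaves implicit: why $\mathbf{1}_{Y(\mathbb{R})}$ has no summand supported on $Z(\mathbb{R})$, and why the canonical map itself, not merely some isomorphism, is invertible.
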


\begin{proof}
$1)\implies 2)$: The fundamental class of a resolution pushes forward to the fundamental class of $Y(\mathbb{R})$, providing a splitting of $\mathbf{1}_{Y(\mathbb{R})}\to f_*\mathbf{1}_{X(\mathbb{R})}$. Proposition \ref{Prop:smooth real point} then implies that all connected components $Y(\mathbb{R})_i$ are $d$-dimensional, yielding $\mathscr{E}(Y_\mathbb{R})\cong \mathbf{1}_{Y(\mathbb{R})}$.\\
$2)\implies 1)$: The autoduality of $\mathscr{E}(Y_\mathbb{R})$ implies that the fundamental class map is an isomorphism.
\end{proof}

\subsection{A motivic Zariski-local variant}

As the real geometric extension $\mathscr{E}(Y_\mathbb{R})$ is defined by the extraction of a direct summand, some functoriality properties of this construction are unclear. One natural question is whether the restriction of $\mathscr{E}(Y_\mathbb{R})$ to an open subset $U$ of $Y$ gives the corresponding real geometric extension on $U$. We do not know if this holds for $\mathscr{E}(Y_\mathbb{R})$, and this Zariski-locality was required in a recent application of geometric extensions to representation theory \cite{baine2026geometrytiltingcompositionseries}, so we include this alternate construction, which is Zariski-local.

\begin{defi}
Let $f:X\to Y$ be a resolution of singularities for $Y$. The numerical real geometric extension $\mathscr{E}_{num}(Y(\mathbb{R}))$ is a minimal summand of $f_*\mathbf{1}_{X(\mathbb{R})}$ extending the constant sheaf over $Y^{sm}(\mathbb{R})$ such that its associated idempotent is in the image of the real cycle class map:
\[\mathrm{CH}_d(X\times_Y X)\to H^{\mathrm{BM}}_d(X(\mathbb{R})\times_{Y(\mathbb{R})}X(\mathbb{R}),\mathbb{F}_2) \to \End(f_*\mathbf{1}_{X(\mathbb{R})})\]
\end{defi}

The proof that the isomorphism class of this object is well defined independently of the resolution follows the proof of Theorem \ref{Thm:Real geo extension}. One needs that the crucial splittings of Corollary \ref{Cor:Splitting} are induced by cycles, and has the Krull-Schmidt property since the image of this real cycle class map is a finite-dimensional $\mathbb{F}_2$-vector space. For more details on this alternate definition, we refer the reader to \cite{baine2026geometrytiltingcompositionseries}. The main upshot of this definition is the following Zariski-locality.

\begin{prop}
The restriction of the numerical real geometric extension on $Y(\mathbb{R})$ to $U$ open in $Y$ yields the numerical real geometric extension on $U(\mathbb{R})$:
\[\mathscr{E}_{num}(Y_\mathbb{R})|_{U(\mathbb{R})}\cong \mathscr{E}_{num}(U_\mathbb{R})\]
\end{prop}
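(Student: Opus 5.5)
The plan is to choose resolutions compatibly with the open embedding and compare the two minimal cycle-cut summands via base change. Let $f:X\to Y$ be a resolution and $j:U\to Y$ the open inclusion. Then $f_U:=f|_{f^{-1}(U)}:X_U:=f^{-1}(U)\to U$ is a resolution of $U$. Proper base change along the open inclusion gives
\[(f_*\mathbf{1}_{X(\mathbb{R})})|_{U(\mathbb{R})}\cong (f_U)_*\mathbf{1}_{X_U(\mathbb{R})}.\]
Restriction to $U(\mathbb{R})$ is an additive functor, so it carries the idempotent $e$ cutting out $\mathscr{E}_{num}(Y_\mathbb{R})$ to an idempotent $e|_U$ of $(f_U)_*\mathbf{1}_{X_U(\mathbb{R})}$ whose image is $\mathscr{E}_{num}(Y_\mathbb{R})|_{U(\mathbb{R})}$. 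I would first check that $e|_U$ is algebraic. Write $e$ as the image of a cycle $\alpha\in\mathrm{CH}_d(X\times_Y X)$. Under the convolution isomorphism of Example \ref{Ex:conv iso}, restricting endomorphisms to $U$ corresponds to restricting Borel--Moore classes to the open subset $X_U(\mathbb{R})\times_{U(\mathbb{R})}X_U(\mathbb{R})$. The real cycle class map commutes with flat pullback to opens, so $e|_U$ is the image of $\alpha|_{X_U\times_U X_U}$. Hence $\mathscr{E}_{num}(Y_\mathbb{R})|_{U(\mathbb{R})}$ is a summand of $(f_U)_*\mathbf{1}$ cut out by an algebraic idempotent. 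It extends the constant sheaf on $U^{sm}(\mathbb{R})$, which is nonempty provided $U(\mathbb{R})$ meets $Y^{sm}(\mathbb{R})$; I would assume this, as it is needed for the statement to make sense.

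The remaining issue is minimality, and I expect it to be the main obstacle. The restriction of a minimal algebraic summand need not obviously stay minimal. A cycle-induced idempotent on $U$ could split off a proper summand that does not extend to $Y$.

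To handle this I would use the surjectivity of restriction on Chow groups. The localisation sequence
\[\mathrm{CH}_d(Z')\to\mathrm{CH}_d(X\times_Y X)\to\mathrm{CH}_d(X_U\times_U X_U)\to 0,\]
with $Z'$ the complement, shows that every algebraic endomorphism on $U$ lifts to an algebraic endomorphism on $Y$. Now let $A_Y$ be the finite-dimensional algebra of algebraic endomorphisms of $\mathscr{E}_{num}(Y_\mathbb{R})$, namely $eAe$ with $A$ the image of the cycle class map. Then the restriction $A_Y\to A_U$ onto the corresponding algebra for the restricted object is surjective. Minimality of $\mathscr{E}_{num}(Y_\mathbb{R})$ with the dense-support requirement means the following: every idempotent of $A_Y$ is either $e$ itself, or an idempotent whose image has support in the singular locus.

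Surjections of finite-dimensional algebras lift idempotents. So any idempotent $\epsilon\in A_U$ lifts to an idempotent $\tilde\epsilon\in A_Y$. By minimality, $\tilde\epsilon$ is either $e$, or has image of non-dense support and hence vanishes over $Y^{sm}(\mathbb{R})$. Restricting back, $\epsilon$ is either the identity or vanishes over $U^{sm}(\mathbb{R})$. Therefore $\mathscr{E}_{num}(Y_\mathbb{R})|_{U(\mathbb{R})}$ has no proper algebraic summand extending the constant sheaf.

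Finally, the well-definedness of $\mathscr{E}_{num}(U_\mathbb{R})$, independent of the chosen resolution of $U$, identifies this restricted object with $\mathscr{E}_{num}(U_\mathbb{R})$.
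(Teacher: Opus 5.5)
Your argument is correct and is essentially the paper's proof. Both rest on the surjectivity of Chow-group localisation, which makes $e\,\mathrm{im}(c_f)\,e \to e|_U\,\mathrm{im}(c_{f|_U})\,e|_U$ a surjection of finite-dimensional algebras, and then pass minimality to the quotient. The paper phrases this as ``a quotient of a local ring is local'', while you phrase it as lifting idempotents along the surjection, which amounts to the same thing. You also make explicit two points the paper leaves implicit: that $e|_U$ is algebraic, and that $U^{sm}(\mathbb{R})$ must be nonempty.
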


\begin{proof}
Let $f:X\to Y$ be a resolution of $Y$, with cycle class map 
\[c_f:\mathrm{CH}_d(X\times_{Y} X)\to \End(f_*\mathbf{1}_{X(\mathbb{R})})\] and idempotent $e:f_*\mathbf{1}_{X(\mathbb{R})}\to f_*\mathbf{1}_{X(\mathbb{R})}$ in the image of $c_f$ cutting out $\mathscr{E}_{num}(Y_\mathbb{R})$. By definition of $\mathscr{E}_{num}(Y_\mathbb{R})$, the subalgebra $e\cdot \text{im}(c_f)\cdot e$ of endomorphisms of $f_*\mathbf{1}_{X(\mathbb{R})}$ is a local ring. By the surjectivity of localisation for Chow groups, restriction exhibits the subalgebra $e|_U\cdot \text{im}(c_{f|_U})\cdot e|_U$ of $\End((f|_U)_*\mathbf{1}_{f^{-1}(U)(\mathbb{R})})$ as the quotient of a local ring. It is therefore local, so the split inclusion $\mathscr{E}_{num}(U_\mathbb{R})\to \mathscr{E}_{num}(Y_\mathbb{R})|_{U(\mathbb{R})}$ is an isomorphism.
\end{proof}
\section{Examples and applications}\label{Sec:Examples and applications}

In this section we examine some classes of singular real varieties where we can more explicitly describe the sheaf $\mathscr{E}(Y_\mathbb{R})$. The case of suitable isolated singularities is given in Proposition \ref{Prop:real blowup minimal}, varieties admitting small resolutions addressed in Proposition \ref{Prop:Small resolutions}, and the more involved case of Schubert varieties is treated in Theorem \ref{Thm:Geo is parity on schubert}.

We begin this section with an explicit computation of $\mathscr{E}(Y_\mathbb{R})$ on a cone.

\begin{ex}\label{Ex:explicit computation}
Consider the cone $Y=V(x^2+y^2-z^2)$ in $\mathbb{A}^3$. This has a resolution $f:X=Bl_0(Y)\to Y$ given by blowing up the singular point, with exceptional fibre $\mathbb{P}^1$. On real points, $X(\mathbb{R})$ is the cylinder $S^1\times \mathbb{R}$, and this map is given by pinching the circle $S^1\times \{0\}$ down to the cone point on $Y(\mathbb{R})$.

In this case, we claim the pushforward $f_*\mathbf{1}_{X(\mathbb{R})}$ is indecomposable, giving an isomorphism: \[\mathscr{E}(Y_\mathbb{R})\cong f_*\mathbf{1}_{X(\mathbb{R})}.\] 
To see this indecomposability, it suffices to show that this sheaf has no skyscraper summands at the singular point, as any nondense summand must be supported at $0$. Any subsheaf supported on $\{0\}$ is the pushforward of a sheaf on the point, and the isomorphisms \[i^!i_*\cong \text{Id}_{\{0\}}\cong i^*i_*\] show that any such sheaf on $Y(\mathbb{R})$ will be nonzero under the map \[i^!f_*\mathbf{1}_{X(\mathbb{R})}\to i^*f_*\mathbf{1}_{X(\mathbb{R})}.\] This map of graded vector spaces is zero however, as the total space of the blowup is the total space of a trivial line bundle on $\mathbb{P}^1(\mathbb{R})$, so the Gysin morphism
\[H^{*-1}(\mathbb{P}^1(\mathbb{R}),\mathbb{F}_2)\to H^*(\mathbb{P}^1(\mathbb{R}),\mathbb{F}_2)\] is multiplication by the first Stiefel-Whitney class of this trivial line bundle (see Example \ref{ex:Manifolds}).
\end{ex}

The following example also serves to show that this construction is sensitive to the variety structure of $X$, rather than just as a topological space.

\begin{ex}\label{Ex:Two planes}
Let $Y=V_1\cup V_2$ be the union of two generic linear $2$-planes in $\mathbb{A}^4$. Taking the disjoint union of these planes gives a resolution, and the pushforward of the constant sheaf along this map is the direct sum of the constant sheaves on each plane. This pushforward is therefore the real geometric extension. In particular, the stalk of this sheaf at the singular point has trivial first cohomology. As the space $Y(\mathbb{R})$ is homeomorphic to that of the previous example, but is not isomorphic as a variety, we see that $\mathscr{E}(Y_\mathbb{R})$ depends on the variety structure.
\end{ex}

The proof used in Example \ref{Ex:explicit computation} may be generalised to the following topological application.

\begin{prop}\label{Prop:real blowup minimal}
    Let $Y$ have an isolated singularity at a real point $y$, such that the blowup $\pi:Bl_yY\to Y$ has smooth exceptional divisor $E$. Assume that $E(\mathbb{R})$ is nonempty, and that the real normal line bundle of $E(\mathbb{R})$ in the real points of this blowup is topologically trivial. Then we have an isomorphism: \[\mathscr{E}(Y_\mathbb{R})\cong \pi_*\mathbf{1}_{Bl_y Y(\mathbb{R})}\]
    As such, for any resolution $f:X\to Y$, the fibre $F_y=f^{-1}(y)$ contains the cohomology of $E(\mathbb{R})$ as a direct summand, giving:
    \[\dim H^i(E(\mathbb{R}),\mathbb{F}_2) \leq \dim H^i(F_y(\mathbb{R}),\mathbb{F}_2).\]
    Since $E$ is a divisor, this fibre has dimension $d-1$.
\end{prop}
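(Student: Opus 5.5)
We follow the strategy of Example \ref{Ex:explicit computation}. Write $X=Bl_yY$, smooth since $y$ is an isolated singularity and $E$ is smooth, and note $X(\mathbb{R})\to Y(\mathbb{R})$ is an isomorphism away from $y$, so $\pi$ is a resolution. Theorem \ref{C2.Realgeoextensions} shows $\mathscr{E}(Y_\mathbb{R})$ is a summand of $\pi_*\mathbf{1}_{X(\mathbb{R})}$. By Krull--Schmidt, any complementary summand restricts to zero on $Y^{sm}(\mathbb{R})=Y(\mathbb{R})\setminus\{y\}$, so it is supported at $y$: it has the form $i_*V$ for a nonzero graded vector space $V$, with $i:\{y\}\to Y(\mathbb{R})$. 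It therefore suffices to show that $\pi_*\mathbf{1}_{X(\mathbb{R})}$ has no nonzero summand of the form $i_*V$.

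If $i_*V$ were a summand, then since $i^!i_*\cong \mathrm{Id}\cong i^*i_*$, the natural map $i^!i_*V\to i^*i_*V$ would be an isomorphism on a nonzero object. As a direct summand of the natural map
\[i^!\pi_*\mathbf{1}_{X(\mathbb{R})}\to i^*\pi_*\mathbf{1}_{X(\mathbb{R})},\]
this would make the latter map nonzero. We therefore compute it by base change. Let $k:E(\mathbb{R})\to X(\mathbb{R})$ be the inclusion, a closed submanifold of codimension one. Proper base change gives $i^*\pi_*\mathbf{1}\cong H^*(E(\mathbb{R}))$ and $i^!\pi_*\mathbf{1}\cong H^*(E(\mathbb{R}),k^!\mathbf{1}_{X(\mathbb{R})})\cong H^{*-1}(E(\mathbb{R}))$, using the Thom isomorphism $k^!\mathbf{1}\cong\mathbf{1}_{E(\mathbb{R})}[-1]$ (valid mod two).

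Under these identifications the natural map becomes the Gysin composite of Example \ref{ex:Manifolds}, namely cup product with the top Stiefel--Whitney class $w_1$ of the normal line bundle of $E(\mathbb{R})$ in $X(\mathbb{R})$. By hypothesis this bundle is topologically trivial, so $w_1=0$ and the map vanishes. Hence there is no skyscraper summand, and $\mathscr{E}(Y_\mathbb{R})\cong\pi_*\mathbf{1}_{X(\mathbb{R})}$. The main care needed is checking that base change identifies the natural transformation $i^!\to i^*$ with $k^!\to k^*$ applied to $\mathbf{1}_{X(\mathbb{R})}$, followed by pushforward to the point, and that this is the Euler class map. This is the step I expect to be the main obstacle. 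It follows from the compatibility of base change with the counit $i_!i^!\to\mathrm{Id}$.

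For the consequences, the stalk of $\mathscr{E}(Y_\mathbb{R})$ at $y$ is $H^*(E(\mathbb{R}))$ by proper base change. The Corollary following Theorem \ref{C2.Realgeoextensions} (stalk lower bound) then makes $H^*(E(\mathbb{R}))$ a summand of $H^*(F_y(\mathbb{R}))$, giving the inequality. Since $E(\mathbb{R})$ is a nonempty compact manifold of dimension $d-1$, we have $H^{d-1}(E(\mathbb{R}),\mathbb{F}_2)\neq0$, so $H^{d-1}(F_y(\mathbb{R}))\neq0$ and $\dim F_y(\mathbb{R})\geq d-1$. Conversely, $F_y(\mathbb{R})\subsetneq X(\mathbb{R})$ lies in a proper subvariety of $X$, so its dimension is at most $d-1$, giving equality.
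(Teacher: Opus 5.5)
Your proposal is correct and follows the paper's proof: you rule out skyscraper summands at $y$ by showing that the natural map $i^!\pi_*\mathbf{1}_{X(\mathbb{R})}\to i^*\pi_*\mathbf{1}_{X(\mathbb{R})}$ vanishes, identifying it via base change with cup product by $w_1$ of the trivial normal line bundle of $E(\mathbb{R})$. Your justification of the dimension claim, via the nonvanishing of $H^{d-1}(E(\mathbb{R}),\mathbb{F}_2)$, is a detail the paper only asserts.
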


\begin{proof}
As in the previous example, we need to check that $\mathscr{E}(Y_\mathbb{R})$ has no skyscraper summands at the singular point. Any such summand would imply the map \[i^!f_*\mathbf{1}_{X(\mathbb{R})}\to i^*f_*\mathbf{1}_{X(\mathbb{R})}\] is nonzero. By base change we may identify this with the map of graded vector spaces 
\[H^{*-1}(E(\mathbb{R}),\mathbb{F}_2)\to H^{*}(E(\mathbb{R}),\mathbb{F}_2)\]
given by multiplication by the first Stiefel-Whitney class of the normal bundle of $E(\mathbb{R})$ in this blowup. This map is thus zero by our triviality assumption, so no skyscraper summands may exist, and this pushforward is indecomposable.
\end{proof}

\begin{remark}
    In the situation of the above proposition, the triviality condition of this normal line bundle is equivalent to the topological link $\text{lk}_y(Y(\mathbb{R}))$ having the maximal number of connected components:
\[ |\pi_0(\text{lk}_y(Y(\mathbb{R})))|=2\cdot |\pi_0(E(\mathbb{R}))|\]
One can see this equivalence by considering a tubular neighbourhood $E(\mathbb{R})_\epsilon$ of $E(\mathbb{R})$ in this blowup, and counting the connected components of $E(\mathbb{R})_\epsilon\setminus E(\mathbb{R})$.
\end{remark}

This situation invites a comparison to the complex setting with rational coefficients. Let $Y$ be the affine cone on a smooth complex projective variety $X$ embedded via very ample line bundle $L$. When we analyse this pushforward along the blowup, the decomposition theorem implies that the indecomposable summand is the perverse sheaf $\mathbf{IC}(Y(\mathbb{C}),\mathbb{Q})$. The perversity restriction on stalks implies that multiplication by the first Chern class has cokernel supported in degree at most $d$. This follows from, and is only slightly weaker than, the hard Lefschetz property on the rational cohomology of $X(\mathbb{C})$. The above real example may be interpreted as a maximal failure of the hard Lefschetz theorem, since $w_1(L_\mathbb{R})$, the analogue of $c_1(L)$, is zero. This failure of hard Lefschetz then translates to the existence of a codimension one fibre in any resolution of $Y$.

For small resolutions, we may recognise the real geometric extension as the pushforward of the constant sheaf.

\begin{prop}\label{Prop:Small resolutions}
Let $Y$ be a $d$-dimensional real variety with a smooth real point, and let $f:X\to Y$ be a small resolution of $Y$. Then the real geometric extension is the pushforward of the constant sheaf along this resolution: \[\mathscr{E}(Y_\mathbb{R})\cong f_*\mathbf{1}_{X(\mathbb{R})}.\]
In particular, the cohomology of $\mathscr{E}(Y_\mathbb{R})$ recovers the cohomology of \emph{any} small resolution, if one exists:
\[\mathscr{E}^i(Y(\mathbb{R}),\mathbb{F}_2)\cong H^i(X(\mathbb{R}),\mathbb{F}_2)\]
\end{prop}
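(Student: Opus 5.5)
Since $\mathscr{E}(Y_\mathbb{R})$ is the sum of the Krull-Schmidt summands of $f_*\mathbf{1}_{X(\mathbb{R})}$ that are nonzero over $Y^{sm}(\mathbb{R})$ (Theorem \ref{C2.Realgeoextensions}), I will show that $f_*\mathbf{1}_{X(\mathbb{R})}$ has no nonzero direct summand whose support misses the smooth locus. I will do this by computing the endomorphism algebra through the convolution isomorphism of Example \ref{Ex:conv iso}, applied with $Z=X$ and $g=f$:
\[\End(f_*\mathbf{1}_{X(\mathbb{R})})\cong H^{\mathrm{BM}}_d(X(\mathbb{R})\times_{Y(\mathbb{R})}X(\mathbb{R}),\mathbb{F}_2).\]
Here $f$ is proper and $X(\mathbb{R})$ is a $d$-manifold with its $\mathbb{F}_2$ fundamental class. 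Note that $(X\times_Y X)(\mathbb{R})=X(\mathbb{R})\times_{Y(\mathbb{R})}X(\mathbb{R})$.

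The key step is dimension counting. By smallness, $X\times_Y X$ is the union of $\overline{\Delta(Y^{sm})}$ and a closed subvariety $W$ of dimension $<d$; I take $W$ to be the union of the other irreducible components. The real points of $W$ form a semialgebraic set of dimension $<d$, so $H^{\mathrm{BM}}_d(W(\mathbb{R}))=H^{\mathrm{BM}}_{d+1}(W(\mathbb{R}))=0$. Let $U$ be the complement of $W$ in $X\times_Y X$. It is an open subset of $\overline{\Delta(Y^{sm})}$ containing $\Delta(f^{-1}(Y^{sm}))$, and has dimension $d$. The localisation sequence of Example \ref{Ex:Recollement sequences} then gives an injection
\[H^{\mathrm{BM}}_d((X\times_YX)(\mathbb{R}))\hookrightarrow H^{\mathrm{BM}}_d(U(\mathbb{R})).\]
Similarly, $H^{\mathrm{BM}}_d(U(\mathbb{R}))$ injects into $H^{\mathrm{BM}}_d$ of the diagonal over $Y^{sm}(\mathbb{R})$, because the boundary of that diagonal inside $U(\mathbb{R})$ is again of dimension $<d$. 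The diagonal over $Y^{sm}(\mathbb{R})$ is homeomorphic to the manifold $Y^{sm}(\mathbb{R})$, so its top Borel--Moore group is $\mathbb{F}_2^{\pi_0(Y^{sm}(\mathbb{R}))}$. The whole composite should be identified with the restriction map
\[\End(f_*\mathbf{1}_{X(\mathbb{R})})\to \End(\mathbf{1}_{Y^{sm}(\mathbb{R})}),\]
which is injective by the above.

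Given this injectivity, suppose $e$ is an idempotent cutting out a summand supported on the singular locus. Then $e$ restricts to $0$ over $Y^{sm}(\mathbb{R})$, hence $e=0$. Therefore every indecomposable summand of $f_*\mathbf{1}_{X(\mathbb{R})}$ is nonzero over the smooth locus, and $\mathscr{E}(Y_\mathbb{R})\cong f_*\mathbf{1}_{X(\mathbb{R})}$. The cohomology statement follows by applying $t_*$, since $\mathbb{H}^i(f_*\mathbf{1}_{X(\mathbb{R})})=H^i(X(\mathbb{R}))$.

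I expect the main obstacle to be checking that the convolution isomorphism is compatible with restriction to the open set $Y^{sm}(\mathbb{R})$. Concretely, the identification of $\End(f_*\mathbf{1})$ with $H^{\mathrm{BM}}_d$ must commute with base change to $f^{-1}(Y^{sm})$, where $f$ is an isomorphism and the fibre product is just the diagonal. I would handle this by noting that every step in Example \ref{Ex:conv iso} is built from base-change isomorphisms, and these commute with open restriction. A secondary subtlety is that the real dimension of $W(\mathbb{R})$ is at most the algebraic dimension of $W$, which holds for semialgebraic sets (\cite{bochnak1998real}).
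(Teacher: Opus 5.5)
Your proposal is correct and follows the paper's proof essentially step for step: the convolution isomorphism, injectivity of restriction of $H^{\mathrm{BM}}_d$ to the diagonal over $Y^{sm}(\mathbb{R})$ because the complement has dimension $<d$, and the identification of this restriction with restriction of endomorphisms, which the paper takes from \cite[\S A]{HoneWilliamson2025}. The differences are only cosmetic: you localise in two steps (first removing the lower-dimensional components $W$, then the part of the diagonal over the singular locus) where the paper removes the whole complement at once, and you omit the paper's surjectivity remark, which is not needed to rule out idempotents supported off the smooth locus.
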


\begin{proof}
By the convolution isomorphism of Example \ref{Ex:conv iso}, we may identify the endomorphisms of $f_*\mathbf{1}_{X(\mathbb{R})}$ with the $d$th Borel-Moore homology of the fibre product $X(\mathbb{R})\times_{Y(\mathbb{R})}X(\mathbb{R})$. Consider now the (open) restriction map to $\Delta(Y^{sm}(\mathbb{R}))$:
 \[H^{\mathrm{BM}}_d(X(\mathbb{R})\times_{Y(\mathbb{R})}X(\mathbb{R}),\mathbb{F}_2)\to H^{\mathrm{BM}}_d(\Delta(Y^{sm}(\mathbb{R})),\mathbb{F}_2)\]
A fundamental class of the closure of $\Delta(Y^{sm}(\mathbb{R}))$ exhibits the surjectivity of this map by base change. On the other hand, our assumption of smallness of the map $f$ gives that the complement $Z(\mathbb{R})$ of $\Delta(Y^{sm}(\mathbb{R}))$ has cohomological dimension less than $d$. The long exact sequence in Borel-Moore homology then yields the injectivity of the restriction map \[0=H_d^{\mathrm{BM}}(Z(\mathbb{R}),\mathbb{F}_2)\to H^{\mathrm{BM}}_d(X(\mathbb{R})\times_{Y(\mathbb{R})}X(\mathbb{R}),\mathbb{F}_2)\to H^{\mathrm{BM}}_d(\Delta(Y^{sm}(\mathbb{R})),\mathbb{F}_2)\] Identifying this restriction with restriction of endomorphisms (see \cite[\S A]{HoneWilliamson2025}) we conclude that $f_*\mathbf{1}_{X(\mathbb{R})}$ is the minimal summand extending $\mathbf{1}_{Y^{sm}(\mathbb{R})}$ on the smooth locus, completing the proof.
\end{proof}
\subsection{Real Schubert varieties and parity sheaves}
We conclude this section with an analysis of real geometric extensions on Schubert varieties. In this case, it is more convenient to describe the whole additive, graded category $\mathrm{Geo}(\mathscr{F}(\mathbb{R}),\mathbb{F}_2)$ of real geometric extensions on the real flag variety. Our main theorem is an equivalence of this category with $\mathrm{Par}_{ev}(\mathscr{F}(\mathbb{C}),\mathbb{F}_2)$, the category of even (mod two) parity sheaves on the complex flag variety. This equivalence follows at once from the motivic presentation of the category of parity sheaves, and our real geometric extensions in this context may be interpreted as the real realisation of the mod two motives underlying parity sheaves.

Recall that for $G$ a real split reductive group with Borel $B$, the flag variety $\mathscr{F}:=G/B$ has a Bruhat stratification into strata $X^\circ(w)$ indexed by $w$ in $W$ the Weyl group of $G$. Each stratum is isomorphic to an affine space of dimension $|w|$, the length of $w$, and the closure $X(w):=\overline{X^\circ(w)}$ is the Schubert variety associated to $w\in W$. These varieties have a nice family of resolutions; the Bott-Samelson resolutions $X(\underline{w})\to X(w)$ associated to reduced expressions $\underline{w}$ for $w$. These resolutions are naturally $B$ equivariantly projective, and this equivariance endows their fibres with affine pavings, needed for the following lemma.

\begin{lem}\label{Lem:affine paving}
Consider two Bott-Samelson resolutions $X(\underline{w})\to X(w)$, $X(\underline{u})\to X(u)$, interpreted as maps to the flag variety $\mathscr{F}$. The homology of their fibre product is completely controlled by its Chow groups; the following real and complex cycle class maps are isomorphisms:
\[\mathrm{CH}_\bullet(X(\underline{w})\times_{\mathscr{F}}X(\underline{u}))\otimes \mathbb{F}_2 \xrightarrow{\sim} H_{2\bullet}^{\mathrm{BM}}(X(\underline{w})(\mathbb{C})\times_{\mathscr{F}(\mathbb{C})}X(\underline{u})(\mathbb{C}),\mathbb{F}_2)\]
\[\mathrm{CH}_\bullet(X(\underline{w})\times_{\mathscr{F}}X(\underline{u}))\otimes \mathbb{F}_2\xrightarrow{\sim} H_{\bullet}^{\mathrm{BM}}(X(\underline{w})(\mathbb{R})\times_{\mathscr{F}(\mathbb{R})}X(\underline{u})(\mathbb{R}),\mathbb{F}_2)\]
\end{lem}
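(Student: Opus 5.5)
The plan is to exhibit an algebraic paving of the fibre product $Z:=X(\underline{w})\times_{\mathscr{F}}X(\underline{u})$ by affine spaces, defined over $\mathbb{R}$. Both cycle class maps will then be isomorphisms by a standard induction on the number of cells, using the localisation sequences for Chow groups and for Borel-Moore homology (Example \ref{Ex:Recollement sequences}).

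\emph{Step 1: the paving.} Stratify $\mathscr{F}$ by the Bruhat cells $X^\circ(v)\cong\mathbb{A}^{|v|}$. Over each cell, the Bott-Samelson maps are $B$-equivariant, so $X(\underline{w})|_{X^\circ(v)}\cong X^\circ(v)\times F_v$ and $X(\underline{u})|_{X^\circ(v)}\cong X^\circ(v)\times F'_v$, where $F_v,F'_v$ are the fibres over $vB/B$. To see this, use the unipotent radical $U_v\subset B$, a split unipotent subgroup acting simply transitively on $X^\circ(v)$. Hence $Z|_{X^\circ(v)}\cong \mathbb{A}^{|v|}\times F_v\times F'_v$. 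The fibres $F_v$ are paved by affine spaces defined over $\mathbb{R}$; this is the standard Bialynicki-Birula/torus-fixed-point paving of Bott-Samelson fibres, using the split torus. A product of pavings is a paving, so $Z$ admits a filtration by closed subvarieties defined over $\mathbb{R}$ whose successive differences are disjoint unions of copies of $\mathbb{A}^n_\mathbb{R}$.

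\emph{Step 2: induction.} For a paved variety $V$ with closed union of cells $C$ and open cell $A\cong\mathbb{A}^n$, compare the localisation sequences
\[\mathrm{CH}_k(C)\to\mathrm{CH}_k(V)\to\mathrm{CH}_k(A)\to 0\]
with
\[H^{\mathrm{BM}}_{k}(C(\mathbb{R}))\to H^{\mathrm{BM}}_{k}(V(\mathbb{R}))\to H^{\mathrm{BM}}_{k}(\mathbb{R}^n)\to H^{\mathrm{BM}}_{k-1}(C(\mathbb{R})).\]
The complex case is the same with degrees doubled. We have $H^{\mathrm{BM}}_\ast(\mathbb{R}^n)=\mathbb{F}_2$ in degree $n$, spanned by the fundamental class, which is the image of $[\mathbb{A}^n]$. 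By induction, $H^{\mathrm{BM}}(C(\mathbb{R}))$ is spanned by the closures of cells, so it lives in degrees where the cycle map from $\mathrm{CH}(C)$ is surjective. The connecting map $H^{\mathrm{BM}}_n(\mathbb{R}^n)\to H^{\mathrm{BM}}_{n-1}(C(\mathbb{R}))$ vanishes, because the fundamental class of $\mathbb{R}^n$ lifts to the fundamental class of the closure $\overline{A}(\mathbb{R})$, which exists by the fundamental class proposition of Section \ref{Sec:background}. This gives short exact sequences in Borel-Moore homology. The five lemma then yields surjectivity of the cycle map on $V$. For injectivity, note that $\mathrm{CH}_\bullet(V)\otimes\mathbb{F}_2$ is spanned by the classes of cell closures, so its dimension is at most the number of cells. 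The Borel-Moore homology has dimension exactly the number of cells by the short exact sequences. Surjectivity together with this dimension count gives an isomorphism.

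\emph{Main obstacle.} I expect the hardest part to be Step 1 over $\mathbb{R}$: checking that the paving of the fibre product is genuinely algebraic and defined over $\mathbb{R}$, so that taking real points gives cells $\mathbb{R}^n$ and the closures give real algebraic fundamental classes. To handle this I would use splitness of $G$, so that $U_v$, $T$ and the attracting cells are all defined over $\mathbb{R}$. The equivariant trivialisation over $X^\circ(v)$ via the $U_v$-action then commutes with taking real points.
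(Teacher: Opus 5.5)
Your proposal is correct and takes essentially the paper's route: produce an affine paving of the fibre product from the Bruhat cells and $B$-equivariance, with the Bott--Samelson fibres over $T$-fixed points paved by Bia\l{}ynicki-Birula cells, and then deduce that both cycle class maps are isomorphisms. The difference is only in which step is spelled out and which is cited: you prove the cycle-class isomorphism for paved varieties by induction on cells (the paper cites this from Fulton and Borel--Haefliger) and take the fibre paving as standard, whereas the paper sketches that paving using a torus whose weights on $T_u\mathscr{F}$ are all negative.
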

\begin{proof}
It is well known (see \cite{fulton2012intersectiontheory} over $\mathbb{C}$, \cite{BorelHaefliger} over $\mathbb{R}$) that these isomorphisms hold for real varieties $Y$ admitting an affine paving, a decomposition $Y=\bigcup_{i=1}^n Z_i$ with each $Z_i$ closed, with $Z_i\setminus Z_{i-1}\cong \mathbb{A}^{\alpha_i}$ for some $\alpha_i\in \mathbb{N}$.

It suffices to show these fibre products admit such a paving. The flag variety $\mathscr{F}$ admits an affine paving refining its decomposition into Bruhat cells, so using $B$-equivariance, we only need to show that the fibres of $X(\underline{w})\to X(w)$ admit affine pavings. For $T$ a maximal torus in $B$, $B$-equivariance implies we only need to find affine pavings for the fibres over $u$ fixed by $T$ in $X(w)$. For such a $u$, we may find another torus $T'$ in $B$ such that all weights of $T'$ on the tangent space of $\mathscr{F}$ at $u$ are negative. Let us consider the Białynicki-Birula cell decompositions \cite{BBdecomposition} for the $T'$ action on the (smooth) source and target of the $T'$ equivariant map \[X(\underline{w})\to \mathscr{F}.\] Our negativity assumption shows that $u\in \mathscr{F}$ has its attracting cell of dimension zero, so the cell decomposition of $X(\underline{w})$ induces a cell decomposition of the fibre over $u$. The $B$-equivariant projectivity of the Bott-Samelson resolution then implies this decomposition into affine space cells may be ordered to yield an affine paving, giving the result.
\end{proof}

\begin{cor}
For a Schubert variety $X(w)$, the real geometric extension and its numerical motivic variant agree.
\end{cor}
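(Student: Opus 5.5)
We compare the two minimality conditions inside one Bott-Samelson resolution. Fix a reduced expression $\underline{w}$ and write $f:X(\underline{w})\to X(w)$. By the convolution isomorphism of Example \ref{Ex:conv iso}, applied with $Z=X=X(\underline{w})$ over $Y=X(w)$ (or equivalently over $\mathscr{F}$, since $X(w)\to\mathscr{F}$ is a closed embedding and the fibre products agree), we have
\[\End(f_*\mathbf{1}_{X(\underline{w})(\mathbb{R})})\cong H^{\mathrm{BM}}_d(X(\underline{w})(\mathbb{R})\times_{X(w)(\mathbb{R})}X(\underline{w})(\mathbb{R}),\mathbb{F}_2),\]
with $d=|w|$. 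The real cycle class map in the definition of $\mathscr{E}_{num}$ factors through this identification. By Lemma \ref{Lem:affine paving}, the map
\[\mathrm{CH}_d(X(\underline{w})\times_{\mathscr{F}}X(\underline{w}))\otimes\mathbb{F}_2\to H^{\mathrm{BM}}_d(X(\underline{w})(\mathbb{R})\times_{\mathscr{F}(\mathbb{R})}X(\underline{w})(\mathbb{R}),\mathbb{F}_2)\]
is an isomorphism, and in particular surjective. Since Chow groups are taken mod $2$, the image of $\mathrm{CH}_d$ is unchanged. Hence the image of $c_f$ is the whole endomorphism algebra $\End(f_*\mathbf{1}_{X(\underline{w})(\mathbb{R})})$.

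With this in hand, the conclusion is formal. Every idempotent of $f_*\mathbf{1}_{X(\underline{w})(\mathbb{R})}$ lies in the image of $c_f$. So the minimal summand extending $\mathbf{1}_{Y^{sm}(\mathbb{R})}$ whose idempotent lies in $\mathrm{im}(c_f)$ is the same as the minimal summand with no restriction on the idempotent. By Theorem \ref{C2.Realgeoextensions} the latter is $\mathscr{E}(X(w)_\mathbb{R})$, and by definition the former is $\mathscr{E}_{num}(X(w)_\mathbb{R})$. Since both objects are independent of the chosen resolution, computing them with the Bott-Samelson resolution suffices.

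The main point needing care is the first step. The convolution identification must match the real cycle class map used in the definition of $\mathscr{E}_{num}$, in degree $d$ on the real fibre product, and the relevant real variety must be exactly the fibre product paved in Lemma \ref{Lem:affine paving}. I would check that the composite $\mathrm{CH}_d\to H^{\mathrm{BM}}_d\to\End$ in the definition is literally the Lemma's isomorphism followed by the convolution isomorphism, so no further compatibility (such as with composition) is needed for surjectivity. A secondary point is that $X(\underline{w})$ has a smooth real point and $X(\underline{w})(\mathbb{R})$ is a $d$-manifold with fundamental class. This follows because $X(\underline{w})$ is a smooth split variety paved by affine spaces, so it contains a real open cell $\mathbb{A}^d(\mathbb{R})$.
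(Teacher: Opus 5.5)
Your proposal is correct and follows essentially the same argument as the paper. The paper also uses Lemma \ref{Lem:affine paving} to see that the real cycle class map onto $H^{\mathrm{BM}}_d$ of the real fibre product of a Bott-Samelson resolution, which is identified with $\End(f_*\mathbf{1}_{X(\underline{w})(\mathbb{R})})$, is surjective, so the idempotent cutting out $\mathscr{E}(X(w)_\mathbb{R})$ lies in its image. Your extra remarks (fibre product over $X(w)$ versus $\mathscr{F}$, tensoring with $\mathbb{F}_2$, existence of a smooth real point) only make explicit details the paper leaves implicit.
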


\begin{proof}
For a Bott-Samelson resolution $X(\underline{w})\to X(w)$, the map from the Chow groups of $X(\underline{w})\times_{X(w)}X(\underline{w})$ to its homology is surjective by Lemma \ref{Lem:affine paving}. The idempotent endomorphism cutting out the real geometric extension is therefore in the image of the real cycle class map.
\end{proof}

\begin{thm}\label{Thm:Geo is parity on schubert}
If $X(w)$ is a Schubert variety, then the cohomology groups, stalks and costalks of $\mathscr{E}({X(w)}_\mathbb{R})$ are isomorphic to those of the corresponding parity sheaf, with degrees divided by two:
\begin{align*}
    \mathscr{E}^i(X(w)(\mathbb{R}),\mathbb{F}_2):=\mathbb{H}^i\mathscr{E}(X(w)_\mathbb{R})&\cong \mathbb{H}^{2i}\mathscr{E}(X(w)_\mathbb{C})\\
    H^i(i_u^*\mathscr{E}(X(w)_\mathbb{R}))&\cong H^{2i}(i_u^*\mathscr{E}(X(w)_\mathbb{C}))\\
    H^i(i_u^!\mathscr{E}(X(w)_\mathbb{R}))&\cong H^{2i}(i_u^!\mathscr{E}(X(w)_\mathbb{C}))
\end{align*}
Furthermore, let $\mathrm{Geo}(\mathscr{F}(\mathbb{R}),\mathbb{F}_2)$ denote the additive category of sums and shifts of the sheaves $\mathscr{E}(X(w)_\mathbb{R})$ on the real flag manifold, and $\mathrm{Par}_{ev}(\mathscr{F}(\mathbb{C}),\mathbb{F}_2)$ the corresponding category of even parity sheaves. Then the above isomorphisms follow from an equivalence of categories \[\mathrm{Geo}(\mathscr{F}(\mathbb{R}),\mathbb{F}_2)\cong \mathrm{Par}_{ev}(\mathscr{F}(\mathbb{C}),\mathbb{F}_2)\] which doubles degrees.
\end{thm}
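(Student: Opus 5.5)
We will realise both categories as Karoubian envelopes of the same graded endomorphism data coming from Bott--Samelson resolutions, compared through Chow groups. Let $\mathrm{BS}_\mathbb{R}$ be the additive category whose objects are shifts of $\pi_{\underline{w},*}\mathbf{1}_{X(\underline{w})(\mathbb{R})}$ for Bott--Samelson resolutions $\pi_{\underline{w}}:X(\underline{w})\to\mathscr{F}$. Let $\mathrm{BS}_\mathbb{C}$ be the analogous category of even shifts of $\pi_{\underline{w},*}\mathbf{1}_{X(\underline{w})(\mathbb{C})}$ on $\mathscr{F}(\mathbb{C})$. By Theorem \ref{C2.Realgeoextensions} (its proof runs over any resolution), every $\mathscr{E}(X(w)_\mathbb{R})$ is a summand of an object of $\mathrm{BS}_\mathbb{R}$, and conversely each object of $\mathrm{BS}_\mathbb{R}$ decomposes into geometric extensions of Schubert varieties. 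This converse follows by induction on the Bruhat order, since every summand is supported on a Schubert variety and is $B$-constructible. Hence $\mathrm{Geo}(\mathscr{F}(\mathbb{R}),\mathbb{F}_2)$ is the Karoubi envelope of $\mathrm{BS}_\mathbb{R}$. Symmetrically, by \cite{ParitysheavesJMW} and \cite{HoneWilliamson2025}, $\mathrm{Par}_{ev}(\mathscr{F}(\mathbb{C}),\mathbb{F}_2)$ is the Karoubi envelope of $\mathrm{BS}_\mathbb{C}$. It therefore suffices to give a degree-doubling equivalence $\mathrm{BS}_\mathbb{R}\simeq\mathrm{BS}_\mathbb{C}$ that is the identity on objects.

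For the morphism spaces we use the convolution isomorphism of Example \ref{Ex:conv iso}, applied with the shifted version:
\[\hom(\pi_{\underline{w},*}\mathbf{1},\pi_{\underline{u},*}\mathbf{1}[i])\cong H^{\mathrm{BM}}_{d_{\underline{u}}-i}(X(\underline{w})(\mathbb{R})\times_{\mathscr{F}(\mathbb{R})}X(\underline{u})(\mathbb{R}),\mathbb{F}_2).\]
Over $\mathbb{C}$ the corresponding group is $H^{\mathrm{BM}}_{2(d_{\underline{u}}-i)}$ of the complex fibre product, in shift $2i$. By Lemma \ref{Lem:affine paving}, both are isomorphic via the cycle class maps to $\mathrm{CH}_{d_{\underline{u}}-i}(X(\underline{w})\times_\mathscr{F}X(\underline{u}))\otimes\mathbb{F}_2$. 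Odd shifts on the complex side vanish, since the Borel--Moore homology of an affinely paved space is concentrated in even degrees. This gives the desired bijection on $\hom$-spaces, doubling degrees.

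The main obstacle is \emph{functoriality}: the identifications must be compatible with composition. Composition of morphisms corresponds under convolution to the convolution product of Borel--Moore classes on triple fibre products. On Chow groups it corresponds to Fulton's refined intersection product $p_{13*}(p_{12}^*\alpha\cdot_{\Delta} p_{23}^*\beta)$, with the refined Gysin map along the diagonal of the smooth $X(\underline{u})$. So the key step is to check that both the complex and the real ($\mathbb{F}_2$, Borel--Haefliger) cycle class maps commute with proper pushforward, flat or smooth pullback, and refined Gysin pullback along regular embeddings. Over $\mathbb{C}$ this is in \cite{fulton2012intersectiontheory}. Over $\mathbb{R}$ we would cite the compatibility of the Borel--Haefliger map with intersection products and proper pushforward, and reduce the refined Gysin compatibility via deformation to the normal cone to the case of a zero section of a vector bundle, where it becomes the statement that the top Stiefel--Whitney class is the real realisation of the top Chern class mod 2. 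If this is delicate in full generality, we can use that everything is affinely paved, so every class is represented by closures of cells. Fulton's formalism then computes products in terms of cycles, and the real and complex realisations of each cycle class are fundamental classes of its real and complex points.

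With a degree-doubling equivalence $\mathrm{BS}_\mathbb{R}\simeq\mathrm{BS}_\mathbb{C}$ in hand, passing to Karoubi envelopes gives $\mathrm{Geo}\simeq\mathrm{Par}_{ev}$. This equivalence sends $\mathscr{E}(X(w)_\mathbb{R})$ to $\mathscr{E}(X(w)_\mathbb{C})$, since both are the unique indecomposable summand of $\pi_{\underline{w},*}\mathbf{1}$ not supported on smaller Schubert varieties, and support is detected by idempotents restricting nontrivially to $\Delta(X^\circ(w))$. The numerical statements then follow by evaluating $\hom$ from objects of the category. Hypercohomology is $\hom(\mathbf{1}_{\mathscr{F}},-[i])$, and $\mathbf{1}_{\mathscr{F}}=\pi_{\underline{w_0},*}$ summand is in the category. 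Stalks and costalks at $u$ are computed by $\hom$ from and to $\mathscr{E}(X(u)_\mathbb{R})$-type objects, more precisely via the Bott--Samelson resolution of $X(u)$ restricted over the cell. Alternatively, base change identifies them with the cohomology and Borel--Moore homology of real and complex fibres of $\pi_{\underline{w}}$ over $u$; these are affinely paved, so they match in doubled degrees, and the idempotents act compatibly by the same Chow-level argument.
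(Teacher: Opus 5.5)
Your overall route is the paper's: compare Bott--Samelson categories through $\mathrm{CH}_\bullet\otimes\mathbb{F}_2$ using the affine paving lemma, take compatibility with composition from the Fulton/Borel--Haefliger formalism (the paper also only cites this), pass to Karoubi envelopes, and read off hypercohomology as $\hom(\mathbf{1}_{\mathscr{F}},-)$. There is one step that does not hold as justified. You claim that every indecomposable summand of a real Bott--Samelson pushforward is a shifted $\mathscr{E}(X(v)_\mathbb{R})$ ``by induction on the Bruhat order, since every summand is supported on a Schubert variety and is $B$-constructible''. $B$-constructibility only tells you that an indecomposable summand $\mathcal{G}$ with support $X(v)(\mathbb{R})$ restricts over the open cell to a sum of shifted constant sheaves. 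It does not tell you that this restriction is a single $\mathbf{1}[n]$, nor that $\mathcal{G}$ is determined by it. Nor is $\mathcal{G}$ a priori a summand of the pushforward along a resolution of $X(v)$, which is how $\mathscr{E}(X(v)_\mathbb{R})$ is defined. In the complex setting these facts come from parity vanishing, i.e.\ surjectivity of restriction of $\hom$'s to the open stratum. Over $\mathbb{R}$ there is no such topological vanishing; the needed surjectivity is only available through the right-exact localisation sequence for Chow groups, transported by the lemma.

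You do not actually need this claim, and the repair is already in your last paragraph. You have $\mathrm{Geo}(\mathscr{F}(\mathbb{R}),\mathbb{F}_2)\subset\mathrm{Kar}(\mathrm{BS}_\mathbb{R})\simeq\mathrm{Kar}(\mathrm{BS}_\mathbb{C})=\mathrm{Par}_{ev}(\mathscr{F}(\mathbb{C}),\mathbb{F}_2)$ as full subcategories. Your idempotent argument (restriction to $\Delta(X^\circ(w))$ commutes with both cycle class maps) shows $\mathscr{E}(X(w)_\mathbb{R})\mapsto\mathscr{E}(X(w)_\mathbb{C})$. Every indecomposable even parity sheaf is an even shift of some $\mathscr{E}(X(w)_\mathbb{C})$, so the inclusion of $\mathrm{Geo}$ is essentially surjective. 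The fact that all summands of real Bott--Samelson pushforwards lie in $\mathrm{Geo}$ then follows by transport, which is exactly how the paper argues.

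For stalks and costalks, your base-change variant is fine: the fibres $F_u$ are affinely paved and the idempotents act through refined pullback to the fibre. It does lean on the same Gysin compatibility as composition. The paper instead builds the categories over every open $\mathscr{F}_U$ and realises stalk and costalk at $u$ via kernels of restriction from $U_{\geq u}$ to $U_{>u}$, which needs only open restriction. Your first suggestion, maps to and from $\mathscr{E}(X(u)_\mathbb{R})$, only computes these after restricting to $\mathscr{F}_{U_{\geq u}}$, where that object becomes the constant sheaf on the closed cell. So it too requires setting up the equivalence compatibly over each $\mathscr{F}_U$.
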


\begin{proof}
We will describe these categories via cycles on Bott-Samelson resolutions, and the cycle class maps will induce the desired equivalence. For $U$ be an upwards-closed subset of $W$ under Bruhat order, we will write $\mathscr{F}_U$ for the associated open set in $\mathscr{F}$, and similarly $X(\underline{w})_U$ for the base change of $X(\underline{w})$ to $\mathscr{F}_U$. To each $U$ we associate three graded categories, each with objects Bott-Samelson resolutions mapping to $\mathscr{F}$:
\begin{enumerate}
    \item $C_{\mathrm{CH}}(U)$ has morphisms: \[\Hom^\bullet_{C_{\mathrm{CH}}(U)}(X(\underline{w}),X(\underline{u}))=\text{CH}_{\bullet+|u|}(X(\underline{w})_U\times_{\mathscr{F}_U}X(\underline{u})_U)\otimes \mathbb{F}_2\]
    \item 
    $C_{\mathbb{C}}(U)$ has morphisms: \[\Hom^\bullet_{C_{\mathbb{C}}(U)}(X(\underline{w}),X(\underline{u}))=H^{\textrm{BM}}_{2\bullet+2|u|}(X(\underline{w})(\mathbb{C})_U\times_{\mathscr{F}(\mathbb{C})_U}X(\underline{u})(\mathbb{C})_U,\mathbb{F}_2)\]
    \item $C_{\mathbb{R}}(U)$ has morphisms: \[\Hom^\bullet_{C_{\mathbb{R}}(U)}(X(\underline{w}),X(\underline{u}))=H^{\textrm{BM}}_{\bullet+|u|}(X(\underline{w})(\mathbb{R})_U\times_{\mathscr{F}(\mathbb{R})_U}X(\underline{u})(\mathbb{R})_U,\mathbb{F}_2)\]
\end{enumerate}

Using Lemma \ref{Lem:affine paving} and the compatible localisation sequences, for all $U$ the canonical comparison maps between these categories induce isomorphisms of hom-sets:
\[\begin{tikzcd}
    &C_{\mathrm{CH}}(U)\arrow[dl,"\sim"']\arrow[dr,"\sim"]&\\
    C_{\mathbb{C}}(U)&&C_{\mathbb{R}}(U)
\end{tikzcd}\]

These maps also preserve the composition law (see \cite{fulton2012intersectiontheory} and \cite{BorelHaefliger} for complex and real cases respectively), so induce canonical equivalences of categories. Letting this common category over $U$ be $C(U)$, we define the additive, graded category $\overline{C}(U)$ as the closure of $C(U)$ under direct sums, formal grading shifts and idempotent completion. In the complex realisation $\overline{C}(\mathscr{F})$ provides a presentation of $\mathrm{Par}_{ev}(\mathscr{F}(\mathbb{C}),\mathbb{F}_2)$, the even parity sheaves on $\mathscr{F}$. In particular it is Krull-Schmidt with indecomposable objects indexed (up to even shift) by elements of $W$. On the other hand, in the real context, this category $\overline{C}(\mathscr{F})$ contains $\mathrm{Geo}(\mathscr{F}(\mathbb{R}),\mathbb{F}_2)$, and the description of indecomposable objects in $\overline{C}(U)$ shows that this inclusion is an equivalence. In other words, the lower summands of the pushforward of the constant sheaf along a Bott-Samelson resolution in the real context are real geometric extensions.
This then induces the desired equivalence:\begin{align*}\mathrm{Geo}(\mathscr{F}(\mathbb{R}),\mathbb{F}_2)&\cong \mathrm{Par}_{ev}(\mathscr{F}(\mathbb{C}),\mathbb{F}_2)\\
\mathscr{E}(X(w)_\mathbb{R})[n]&\leftrightarrow \mathscr{E}(X(w)_\mathbb{C})[2n]
\end{align*}
In this category, the cohomology functor is corepresentable by $\mathbf{1}_{\mathscr{F}}$, the unique indecomposable object of dense support, which realises to the constant sheaf in the real and complex settings, implying the result on cohomology.
The stalk and costalk functors are not representable directly, but are still intrinsic to this abstract Hecke category $\overline{C}$. For an inclusion $U\subset V$ we have the natural restriction functor \[\overline{C}(V)\to \overline{C}(U)\] and this map is surjective on morphisms by the localisation sequence in Chow groups. 
To describe the stalk and costalk at $u$, consider the principal subsets $U_{\geq u}$ and $U_{>u}=U_{\geq u}\setminus \{u\}$ of $W$. To any object $\mathcal{F}$ in $\overline{C}(\mathscr{F})$, we have the following two functors:
\[\ker\bigg{(}\hom^*_{\overline{C}(U_{\geq u})}(\mathbf{1}_{U_{\geq u}},\mathcal{F}_{U_{\geq u}})\to \hom^*_{\overline{C}(U_{> u})}(\mathbf{1}_{U_{> u}},\mathcal{F}_{U_{> u}})\bigg{)}\]
\[\ker\bigg{(}\hom^*_{\overline{C}(U_{\geq u})}(\mathcal{F}_{U_{\geq u}},\mathbf{1}_{U_{\geq u}})\to \hom^*_{\overline{C}(U_{> u})}(\mathbf{1}_{U_{> u}},\mathcal{F}_{U_{> u}})\bigg{)}\]
In view of the localisation sequences for sheaves, we recognise this first functor as the stalk, and the second as taking the dual of the costalk, shifted by $|w_0|$ the dimension of $\mathscr{F}$. 
\end{proof}
In forthcoming work we will use this category of real geometric extensions to understand perverse sheaves on the real affine Grassmannian.
\section{Further questions}\label{Sec:Further questions}

We conclude with some natural questions regarding these real geometric extensions $\mathscr{E}(Y_\mathbb{R})$. These sheaves share some key properties with intersection cohomology and on Schubert varieties they may be interpreted as a real version of parity sheaves. In this complex case, both of these constructions have an intrinsic description, as a sheaf on $Y(\mathbb{C})$ with stalks and costalks vanishing in certain degrees.

\begin{question}
Is there an intrinsic characterisation of the sheaf $\mathscr{E}(Y_\mathbb{R})$? 
\end{question}

In contrast to intersection cohomology and parity sheaves, Example \ref{Ex:Two planes} indicates that an answer cannot be purely topological. An answer to this question would clarify exactly what topological information about resolutions the object $\mathscr{E}(Y_\mathbb{R})$ contains. 

In a different direction, Corollary \ref{Cor:fibre minimality} implies that if $f:X\to Y$ is a resolution such that $\mathscr{E}(Y_\mathbb{R})$ is equal to $f_*\mathbf{1}_{X(\mathbb{R})}$, this resolution must be cohomologically minimal in the sense that the (fibrewise) cohomology of any resolution contains the cohomology of the fibres of $f$. While small resolutions have this property, Proposition \ref{Prop:real blowup minimal} shows that this property may occur more frequently, leading to the natural question:

\begin{question}
For a given resolution $f:X\to Y$ when do we have the following isomorphism: \[\mathscr{E}(Y_\mathbb{R})\cong f_*\mathbf{1}_{X(\mathbb{R})}.\]
\end{question}

Finally, there is the question of how different $\mathscr{E}(Y_\mathbb{R})$ and its numerical motivic variant $\mathscr{E}_{num}(Y_\mathbb{R})$ really are.
\begin{question}
Does there exist a real variety $Y$ where $\mathscr{E}(Y_\mathbb{R})$ is not isomorphic to $\mathscr{E}_{num}(Y_\mathbb{R})$? Is $\mathscr{E}(Y_\mathbb{R})$ Zariski-local? 
\end{question}

\bibliographystyle{plain}
\bibliography{Realalgvar}

\Addresses

\end{document}